\newcommand{\hide}[1]{}
\theoremstyle{plain}
\newtheorem{thm}{Theorem}[section]
\newtheorem{prop}[thm]{Proposition}
\newtheorem{lem}[thm]{Lemma}
\newtheorem{Concl}[thm]{Conclusion}
\newtheorem*{Cartan-Dieudonne}{Cartan-Dieudonne theorem {\rm (\cite [Chapter I, Theorem 7.1]{Lam})}}
\theoremstyle{definition}
\newtheorem{defi}[thm]{Definition}
\newtheorem{nota}[thm]{Notation}
\theoremstyle{remark}
\newtheorem{example}[thm]{Example}
\newtheorem{observation}[thm]{Observation}
\newtheorem{rem}[thm]{Remark}
\newcommand{\E}{{\mathcal E}}
\newcommand{\CC}{{\mathbb C}}
\newcommand{\QQ}{{\mathbb Q}}
\newcommand{\RR}{{\mathbb R}}
\newcommand{\ZZ}{{\mathbb Z}}
\newcommand{\D}{{\mathcal D}}
\newcommand{\LB}{{\mathcal L}}
\newcommand{\PP}{{\mathbb P}}
\newcommand{\Pe}{{\mathcal P}}
\newcommand{\lcm}{{\rm lcm}}
\newcommand{\Orb}{{\mathcal Orb}}
\begin{document}
\author{Nikolay Buskin}
\title[] {Every rational Hodge isometry between two $K3$ surfaces is algebraic}
\date{\today}
\maketitle
\begin{abstract}
We present a proof that cohomology classes in $H^{2,2}(S_1\times S_2)$ 
of  Hodge isometries $\psi \colon H^2(S_1,\QQ)\rightarrow H^2(S_2,\QQ)$ between any two
projective complex $K3$ surfaces $S_1$ and $S_2$
are polynomials in Chern classes of coherent sheaves.
This proves a conjecture
of Shafarevich \cite{Shaf}.
\end{abstract}

\sloppy

\tableofcontents

\section { Introduction}


A $K3$ surface is a simply connected smooth compact complex manifold of complex dimension 2
with a trivial canonical bundle. 
Let $S_1,S_2$ be $K3$ surfaces. A $K3$ lattice $\Lambda$ is an even unimodular lattice of signature (3,19). All such lattices are isomorphic.
A typical example of a $K3$ lattice is the second cohomology lattice $H^2(S, \ZZ)$ of a $K3$ surface $S$ with the bilinear
form given by the intersection from.

To any homomorphism $\varphi \colon H^2(S_1, \mathbb{Q}) \rightarrow H^2(S_2, \mathbb{Q})$ 
of Hodge structures 
we can associate a class 
$$Z_\varphi \in  H^{2,2}(S_1\times S_2, \mathbb{C})\cap H^4(S_1 \times S_2, \mathbb{Q}).$$
Here we naturally identify the vector space $H^{p,q}(S_i,\mathbb{C})^*$ with $H^{2-p,2-q}(S_i, \mathbb{C})$.
The Torelli theorem for projective $K3$ surfaces, proved by I. Shafarevich and I. Piatetski-Shapiro in \cite{Piat},
states that given an {\it effective} isometry $\varphi \colon H^2(S_1, \mathbb{Z}) \rightarrow H^2(S_2, \mathbb{Z})$
of the cohomology lattices of $K3$ surfaces $S_1$ and $S_2$, which is a homomorphism
of Hodge structures (a Hodge isometry), there exists a unique isomorphism $f\colon S_2 \rightarrow S_1$ inducing $\varphi$. 
So, in particular, the cycle $Z_{\varphi}$, cohomologous to the graph $\Gamma_f$ of the map $f$, is algebraic.
In his 1970 ICM talk 
\cite{Shaf}  I.  Shafarevich  asked if $Z_{\varphi}$ is algebraic
for any {\it rational} Hodge isometry $\varphi\colon  H^2(S_1, \mathbb{Q}) \rightarrow H^2(S_2, \mathbb{Q})$. 
Our goal is to prove the following theorem.

\begin{thm}
\label{Main-Theorem-Algebraic} Let $S_1,S_2$ be projective $K3$ surfaces. The class $Z_\varphi$ of any Hodge isometry 
$\varphi \colon  H^2(S_1, \QQ) \rightarrow H^2(S_2, \QQ)$
is algebraic.
\end{thm}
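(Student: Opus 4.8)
The plan is to strip off the part of $\varphi$ that is algebraic for trivial reasons and then to realize what remains as a finite composition of algebraic correspondences produced by deformations.

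\emph{Reduction to the transcendental lattice.} Over $\QQ$ write $H^2(S_i,\QQ)=NS(S_i)_\QQ\oplus T(S_i)_\QQ$ orthogonally, where $T(S_i)_\QQ$ is the smallest rational sub-Hodge structure whose complexification contains $H^{2,0}(S_i)$. Since morphisms of Hodge structures are strict, $\varphi_\CC$ carries $H^{2,0}(S_1)$ onto $H^{2,0}(S_2)$, whence $\varphi(T(S_1)_\QQ)=T(S_2)_\QQ$ and, $\varphi$ being an isometry, $\varphi(NS(S_1)_\QQ)=NS(S_2)_\QQ$. Therefore $Z_\varphi=Z_{\varphi|_{NS}}+Z_{\varphi|_T}$ with no mixed K\"unneth components, and $Z_{\varphi|_{NS}}$ is a $\QQ$-combination of classes $[D\times E]$ with $D,E$ divisors, hence algebraic. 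So it suffices to prove that $Z_\psi$ is algebraic for an arbitrary Hodge isometry $\psi\colon T(S_1)_\QQ\to T(S_2)_\QQ$.

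\emph{The groupoid of algebraic isometries.} Call a Hodge isometry between transcendental lattices of projective $K3$ surfaces \emph{algebraic} when its class on the product is a polynomial in Chern classes of coherent sheaves (equivalently, an algebraic cycle class). Such isometries are closed under composition (convolution of Fourier--Mukai kernels, by Grothendieck--Riemann--Roch) and under transposition, they contain the isometries induced by a $K3$ surface together with a fine moduli space of Gieseker-stable sheaves on it, and, more generally, those induced by objects of $D^{b}$ of the product. Hence, once nonempty, the set of algebraic isometries $T(S_1)_\QQ\to T(S_2)_\QQ$ is a torsor under the algebraic Hodge self-isometries of $T(S_1)_\QQ$, and it is enough to prove: (a) there exists at least one algebraic isometry $T(S_1)_\QQ\to T(S_2)_\QQ$; and (b) every Hodge self-isometry of $T(S)_\QQ$ is algebraic. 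Part (a) I would extract from a chain of moduli-of-sheaves constructions joining $S_1$ and $S_2$. For (b), after choosing an abstract isometry of the $\QQ$-quadratic space $T(S)_\QQ$, the Cartan--Dieudonn\'e theorem writes the given self-isometry as a product of reflections $\tau_{v_1}\cdots\tau_{v_k}$. None of the $\tau_{v_j}$ need be a Hodge isometry of $T(S)_\QQ$, but each is a Hodge isometry $T(X_{j-1})_\QQ\to T(X_j)_\QQ$ between the $K3$ surfaces realizing the consecutive periods $\tau_{v_{j-1}}\cdots\tau_{v_1}(\sigma)$ and $\tau_{v_j}\cdots\tau_{v_1}(\sigma)$ (surjectivity of the period map), and convolving the kernels along the chain yields an object of $D^{b}(S\times S)$ with the right class as soon as each individual step is algebraic. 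So everything comes down to a single step.

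\emph{The hard step: a reflection is algebraic.} Fix two projective $K3$ surfaces whose transcendental periods differ by one reflection. I would join them inside the relevant period space by a path built from twistor lines --- families $\mathcal{X}\to\PP^{1}$ attached to Ricci-flat K\"ahler metrics, along which the period rotates in a positive-definite $3$-plane and along which $R^{2}\pi_*\QQ$ is canonically trivial --- so that parallel transport supplies a Hodge isometry agreeing with the reflection; the task is to show that its class on the product of the two projective fibers is algebraic. The strategy is to deform this pair of fibers, keeping the parallel-transport class of Hodge type, down to a pair for which algebraicity is classical --- Kummer surfaces, or singular $K3$ surfaces with Shioda--Inose structure, where the class is a combination of graphs of isogenies of abelian varieties --- and then to transport algebraicity back along the family. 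I expect this to be the main obstacle, since algebraicity of a Hodge class is not a priori preserved along a flat family of Hodge classes (this is exactly an instance of the variational Hodge conjecture). Circumventing it is where the special geometry of $K3$ surfaces must be used decisively: a reflection fixes a hyperplane, so the deformation in play is effectively one-parameter and the obstruction to moving the relevant cycle can be killed; moreover a relative Fourier--Mukai construction over the twistor base, or an algebraization of the twistor family near a projective fiber, produces a kernel that is an honest object of the derived category of the product of the two projective fibers, whose Chern character is automatically algebraic.

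\emph{Conclusion.} Chaining the algebraic correspondences that realize the individual reflections gives (b); together with (a) and the torsor observation, $\psi$ is algebraic, and adding back the algebraic N\'eron--Severi contribution from the first step shows that $Z_\varphi$ is algebraic, proving Theorem~\ref{Main-Theorem-Algebraic}.
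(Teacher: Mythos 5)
Your reduction to the transcendental lattice and the Cartan--Dieudonn\'e decomposition into reflections are sound, and they parallel the paper's reduction of a general rational isometry of $\Lambda_\QQ$ to a composition of isometries of cyclic type realized between intermediate $K3$ surfaces via surjectivity of the period map. But the entire weight of the argument rests on what you yourself call the ``hard step'' --- that the correspondence attached to a single reflection (a cyclic-type Hodge isometry between two projective $K3$ surfaces) is algebraic --- and at that point the proposal stops being a proof. The mechanisms you invoke do not close it: there is no general way to ``kill the obstruction'' to moving an algebraic cycle along a one-parameter family just because a reflection fixes a hyperplane (that is exactly the variational Hodge conjecture you flag); a twistor family is never algebraic and its generic fiber is a non-projective $K3$, so neither ``algebraization of the twistor family near a projective fiber'' nor a relative Fourier--Mukai kernel in the derived category of the product makes sense along the path; and the detour through Kummer or Shioda--Inose fibers does not produce the needed kernel on the original product $S_1\times S_2$. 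Your step (a) (existence of at least one algebraic isometry $T(S_1)_\QQ\to T(S_2)_\QQ$ for two arbitrary Hodge-isometric $K3$'s) is likewise asserted, not proved, and for surfaces of small Picard rank it cannot be extracted from moduli-of-sheaves constructions alone --- that is precisely why Mukai's and Nikulin's results were limited to Picard rank $\ge 11$, resp.\ $\ge 5$; of course (a) would follow from the hard step, so the gap is really a single one.

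What the paper does to fill that gap, and what is missing from your proposal, is the following chain: (1) Mukai's two-dimensional moduli space $M=M_h(v)$ with a universal sheaf $\mathcal E$ on $S\times M$ gives, for every $n$, one explicit example where the $\kappa$-class $\kappa(\mathcal E^\vee)\sqrt{td_{S\times M}}$ induces a Hodge isometry $H^2(S,\QQ)\to H^2(M,\QQ)$ of $n$-cyclic type (Lemma \ref{Reformulation}, Conclusion \ref{Existence-of-n-type-isometry-for-any-n}); (2) a lattice-theoretic classification shows all isometries of $n$-cyclic type lie in a single double orbit $O(\Lambda)\phi O(\Lambda)$ (Proposition \ref{prop-only-one-double-orbit}), so the example is deformation-equivalent to the given cyclic isometry; (3) the locus $\mathcal M_\phi$ of marked pairs on which $\phi$ stays of Hodge type is connected (in each sign component) by twistor paths associated to hyperk\"ahler structures on the product (Proposition \ref{prop-connected-components}); and (4) Verbitsky's hyperholomorphic-sheaf theorem (Theorem \ref{Hyper-theorem}), applied to the slope-polystable Azumaya algebra $\mathcal End(\mathcal E)$ with $c_1=0$ and $c_2$ of constant Hodge type, extends $\mathcal E$ as a \emph{twisted} sheaf along each twistor line, so that at the projective endpoint $S_1\times S_2$ one has a twisted locally free sheaf whose $\kappa$-class represents the isometry, hence an algebraic class. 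This is the replacement for your unproved transport of algebraicity; without an argument of this kind (or some other genuine input at the hard step), the proposal does not yield Theorem \ref{Main-Theorem-Algebraic}.
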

This theorem is proved in Section \ref{Reduction-to-the-case-of-a-cyclic-isometry}.

S. Mukai's groundbreaking work \cite{Mukai1} made a significant progress towards this result
(see  Section 
\ref{Mukai's result: classical and new formulation} below). His main result implies Theorem \ref{Main-Theorem-Algebraic}
in the case when the Picard rank of the surfaces $S_1$ and $S_2$  is greater or equal 11
(Hodge isometric surfaces obviously have equal Picard rank),  see
\cite[Cor. 1.10]{Mukai1}.
Nikulin \cite{Nikulin2} extended Mukai's argument to the case of projective elliptic surfaces,
which by Meyer's Theorem \cite[Cor. 5.10]{Gerstein} implies  Theorem \ref{Main-Theorem-Algebraic} for surfaces with Picard number greater or equal 5.
Mukai announced also Theorem \ref{Main-Theorem-Algebraic} in his 2002  ICM talk \cite{Mukai3}.

%
Let us give now a brief lattice-theoretic introduction
which is needed for understanding the outline of our strategy and then give the outline.
Let us fix a $K3$ lattice $\Lambda$.
Further we will need the notion of a {\it marking} of a $K3$ surface $S$, that is a lattice isometry $\eta_S: H^2(S, \ZZ) \rightarrow \Lambda$.
Let us choose markings $\eta_1, \eta_2$ for the surfaces $S_1, S_2$. For the rest of the introduction 
the notation  $\varphi$ stands for a rational Hodge isometry $\varphi: H^2(S_1,\QQ) \rightarrow H^2(S_2,\QQ)$
and $\phi$ stands for the rational  isometry $\phi=\eta_2\circ \varphi \circ \eta^{-1}_1: \Lambda_\QQ \rightarrow \Lambda_\QQ$, where $\Lambda_\QQ=\Lambda\otimes \QQ$. So, the markings allow us to assign to rational isometries
of different surfaces some elements of the group of isometries of $\Lambda_\QQ$, for which there exists a
nice structure theory. The rational isometries of $\Lambda_\QQ$ induced by (quasi-)universal sheaves are of {\it cyclic} type which
means that it is of the form $g\circ r_x \circ h$, where $g,h$ are integral isometries of $\Lambda$
and $r_x$ is a reflection of $\Lambda_\QQ$, $v \mapsto v-\frac{2(v,x)}{(x,x)}x$ for 
a non-isotropic primitive $x \in \Lambda$. Such a reflection is, in general, defined over $\QQ$, being integral precisely when
$(x,x)=\pm 2$, for any even unimodular lattice $\Lambda$.
We say that the above isometry $g\circ r_x \circ h$ is of 
{\it $n$-cyclic type}\footnote{In Definition \ref{Cyclic-type-definition} a different definition of $n$-cyclic type will be used, in terms of a criterion which is easier to verify. Both definitions are equivalent by Proposition \ref{prop-only-one-double-orbit}.}, if
the primitive vector $x$ satisfies $|(x,x)|=2n$.
We will also say that $\varphi$ is of cyclic type if $\phi=\eta_2\varphi\eta^{-1}_1$ is of cyclic type.
It is immediately clear that this definition does not depend on the choice of markings $\eta_1,\eta_2$.
In Section \ref{Double-orbits}  we prove a lattice-theoretic criterion for an isometry to be of $n$-cyclic type.
%
%

First, we prove Theorem \ref{Main-Theorem-Algebraic} in the particular case when the Hodge isometry $\varphi \colon  H^2(S_1,\QQ) \rightarrow H^2(S_2,\QQ)$ is of {\it cyclic type}.
%
For a general rational Hodge isometry $\varphi$, we have a reduction to the cyclic type case. Namely, as will be shown later, the group of rational isometries of $\Lambda_\QQ$ is generated by isometries of cyclic type.
Representing a general rational isometry $\phi \colon \Lambda_\QQ \rightarrow \Lambda_\QQ$ as a composition of cyclic ones,
 $\phi = \phi_k \circ \dots \circ\phi_1$, we then find a sequence of surfaces $S^{\prime}_1=S_1, \dots, S^{\prime}_{k+1}=S_2$ together with their markings $\eta_i$ so that 1) for each pair $S^{\prime}_i, S^{\prime}_{i+1}$ the isometry $\varphi_i = \eta^{-1}_{i+1}\phi_i\eta_{i}$ is Hodge; 2) each $\varphi_i$ is induced by an algebraic class  on 
$S^{'}_i \times S^{'}_{i+1}$. Now taking composition of algebraic classes as correspondences we get our $\varphi$ represented by an algebraic class.
This will prove Theorem \ref{Main-Theorem-Algebraic}. 

In order to prove that every rational Hodge isometry $\varphi$ of cyclic type is algebraic we need the following four ingredients:

1) For every $n \geqslant 2$ we need an example of  a $K3$ surface $S$, a two-dimensional smooth and projective moduli space  $M$ of stable vector
bundles on $S$ and a  universal (untwisted) sheaf $\mathcal E$ over $S \times M$
inducing via the $\kappa$-class a rational Hodge isometry of $n$-cyclic type  from $H^2(S,\QQ)$ to $H^2(M,\QQ)$,
see Conclusion \ref{Existence-of-n-type-isometry-for-any-n};

2) Fix a rational isometry $\phi \colon  \Lambda_\QQ \rightarrow \Lambda_\QQ$ of $n$-cyclic type for some $n \geqslant 2$.  
Consider the locus in the moduli space of marked pairs $((S_1,\eta_1),(S_2,\eta_2))$ of $K3$ surfaces 
along which $\varphi=\eta^{-1}_2 \phi\eta_1$ stays of Hodge type.
We need to show that this locus is covered by twistor lines analogous to twistor lines in the moduli space 
of marked $K3$
surfaces. The proof of this fact
is given in Proposition \ref{prop-connected-components};

3) We observe that any two {\it signed} isometries of $n$-cyclic type belong to the locus introduced in 2)
and are, thus,  deformation equivalent (this is 
proved by Propositions 
\ref{prop-only-one-double-orbit}
and \ref{prop-connected-components});

4) M. Verbitsky's result on hyperholomorphic sheaves implying  that for every example in 1) the universal sheaf $\mathcal E$ over $S \times M$ can be extended as a twisted sheaf over a twistor family containing the hyperk\"ahler manifold $S\times M$, see Theorem \ref {Hyper-theorem} and the discussion after it. A repeated
application of this result will enable us to deform $\mathcal E$ to a sheaf on the product $S_1 \times S_2$
for any $((S_1,\eta_1),(S_2,\eta_2))$ in the corresponding locus as in 2).

One of the applications of the main result of this work is 
the following corollary:
\vspace*{0.1cm}

\noindent {\bf Corollary.} {\it If $S$ is an algebraic $K3$ surface for which the endomorphism field of
its transcendental Hodge structure is a CM-field then the Hodge conjecture
is true for $S\times S$.}

Indeed, by \cite[Lemma 5.3]{RM} and \cite[Sec. 1.5]{Zarhin} when the endomorphism field of $T(S)_\QQ$
is a CM-field, it is spanned over $\QQ$ by Hodge isometries of $T(S)_\QQ$. Thus,
by Theorem \ref{Main-Theorem-Algebraic} every Hodge endomorphism of $T(S)_\QQ$
is algebraic.
%

\section {Mukai's result: classical and new formulations}
\label{Mukai's result: classical and new formulation}

Here we recall the classical formulation of Mukai's result and reformulate it in the form that
we will use later in the proof of the main theorem.

Let $S$ be an algebraic $K3$ surface.
In \cite{Mukai1} Mukai introduces a weight 2 Hodge structure on the (commutative) cohomology ring $H^*(S, \CC)$,
$$\widetilde{H}^{2,0}(S, \CC)=H^{2,0}(S,\CC),$$
$$\widetilde{H}^{0,2}(S, \CC)=H^{0,2}(S,\CC),$$
$$\widetilde{H}^{1,1}(S, \CC)=H^0(S,\CC) \oplus H^{1,1}(S,\CC)  \oplus H^4(S,\CC).$$
Given a class $\alpha \in H^*(S, \ZZ)$ denote by $\alpha^i$ its graded summand in $H^i(S,\ZZ)$.
Denote by $\widetilde{H}(S,\ZZ)$ the lattice $H^*(S,\mathbb Z)$  with the integral bilinear Mukai pairing $$(\alpha, \beta)=
-\alpha^4\beta^0+\alpha^2\beta^2-\alpha^0\beta^4 \in H^4(S, \ZZ) \cong \ZZ.$$
Now let
$$v =(n, \alpha, s) \in \widetilde{H}^{1,1}(S, \mathbb{Z})=H^{0}(S, \mathbb{Z})\oplus H^{1,1}(S, \mathbb{Z}) \oplus H^{4}(S, \mathbb{Z})$$ be a vector isotropic  with respect to Mukai pairing and let $h$ be an ample divisor on $S$. Recall that $H^{1,1}(S, \mathbb{Z})$ is, by definition, the intersection $H^{1,1}(S) \cap H^2(S,\ZZ)$
and assume that there exists a compact moduli space $M=M_h(v)$ of vector bundles $E$ on $S$, slope-stable with respect to $h$ ($h$-slope-stable), whose Mukai vector $v(E) = ch(E)\sqrt{td_S}$ is equal to $v$. 
%
%
For a 
coherent sheaf $\mathcal E$ on $S\times M$ there is a class 
$$Z_{\mathcal E}=ch({\mathcal E})\sqrt{td_{S\times M}},$$
which determines a homomorphism of Hodge structures
$$f_{\mathcal E} \colon  \widetilde{H}(S, \mathbb{Q}) \rightarrow \widetilde{H}(M, \mathbb{Q}),$$
$$\alpha \mapsto {\pi_M}_*(\pi_S^*(\alpha)\cdot Z_{\mathcal E}).$$
Above, ``$\cdot$'' is the standard multiplication in the cohomology ring $H^*(S, \mathbb{Q})$.

We recall  that for an algebraic $K3$ surface $S$ its transcendental lattice $T(S)$ is defined as the integral lattice $(H^{1,1}(S, \RR)\cap H^2(S,\ZZ))^{\perp}$, where the orthogonal complement is taken in $H^2(S, \ZZ)$ with respect to the intersection form. 
Note that for $v$ as above $T(S)$ is contained in the orthogonal complement $v^{\perp} \subset \widetilde{H}(S,\ZZ)$ and $T(S) \cap \QQ v =\{0\}$. 
We denote by $T(S)_\QQ$ the $\QQ$-vector space $T(S)\otimes_{\ZZ}\QQ$.
For a locally free
sheaf $\mathcal E$ we denote by $\mathcal E^{\vee}$ its dual sheaf.
\begin{thm}{\rm (\cite [Thm. 1.5, Thm. A.5, Thm. 4.9, Prop. 6.4]{Mukai1})} \label{MukaiTheorem1} 
There exists a locally free
sheaf $\mathcal E$ on $S \times M$
and a positive integer $\sigma(\mathcal E)$ 
such that 
$$\frac{1}{\sigma(\mathcal E)}\widetilde{f}_{\mathcal E^{\vee}} \colon  v^{\perp}/\ZZ v 
\xrightarrow{\sim} H^2(M, \ZZ) \subset \widetilde{H}(M,\ZZ)$$
is a Hodge isometry. This isometry is independent of the choice of the 'quasiuniversal' sheaf $\mathcal E$.
The restriction 
\begin{equation}
\label{transcendental-Hodge-isometry}
\frac{1}{\sigma(\mathcal E)}\widetilde{f}_{\mathcal E^{\vee}}|_{T(S)} \colon T(S) \rightarrow T(M)
\end{equation} 
is an isometric embedding with cokernel a finite cyclic group. 
In the case when the sheaf $\mathcal E$ is universal ($\sigma (\mathcal E)=1$) already 
the map $f_{\mathcal E^\vee} \colon \widetilde{H}(S,\ZZ) \rightarrow \widetilde{H}(M,\ZZ)$
is a Hodge isometry.
\end{thm}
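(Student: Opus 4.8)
The plan is to realize $f_{\mathcal{E}^{\vee}}$ as a Fourier--Mukai type correspondence on cohomology and to deduce the isometry statements from the classical computation that convolving the kernel with its dual is the diagonal, after first establishing that $M$ is a $K3$ surface carrying a (quasi)universal sheaf. So, first I would set up the geometry of $M$. Every $h$-slope-stable bundle $E$ with $v(E)=v$ is simple, so Serre duality on $S$ identifies the obstruction space with the trace-free part of $\Ext^{2}(E,E)\cong\Hom(E,E)^{*}$, while $\chi(E,E)=-(v,v)=0$ gives $\dim\Ext^{1}(E,E)=2$. The obstruction map vanishes because $\Ext^{2}_{0}(E,E)$ carries the holomorphic symplectic pairing coming from Serre duality, so $M=M_h(v)$ is smooth of dimension $2$; GIT makes it projective, the symplectic form trivializes $K_M$, and once the cohomological correspondence below is available a comparison of Betti numbers forces $b_{2}(M)=22$, ruling out the abelian case, so $M$ is a $K3$ surface. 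Over small \'etale neighborhoods in $M$ the deformation theory of the family $\{E_m\}$ produces a sheaf on $S\times M$ restricting to $E_m$ on each $S\times\{m\}$; the global gluing obstruction lies in a Brauer group, is killed after passing to the $\sigma(\mathcal{E})$-fold local models, and yields a quasiuniversal --- and, since the $E_m$ are locally free, locally free --- sheaf $\mathcal{E}$ on $S\times M$.

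Next I would study the correspondence. Since $S\times M$ is smooth projective, $Z_{\mathcal{E}^{\vee}}=ch(\mathcal{E}^{\vee})\sqrt{td_{S\times M}}$ is a polynomial in algebraic classes, hence of type $(p,p)$, and $\pi_S^{*}$, ${\pi_M}_{*}$ are morphisms of the Mukai weight-$2$ Hodge structures by the normalization built into $\widetilde{H}$; so $f_{\mathcal{E}^{\vee}}$ is a morphism of Hodge structures. For the metric statement I would compose $f_{\mathcal{E}^{\vee}}$ with the transform $f_{\mathcal{E}}$ in the opposite direction: Grothendieck--Riemann--Roch and base change over the two copies of $M$ reduce the composite to the complexes $R\Hom_S(E_m,E_{m'})$, and for $m\neq m'$ stability and Serre duality give $\Hom=\Ext^{2}=0$ while $\dim\Ext^{1}=-\chi(E_m,E_{m'})=(v,v)=0$, so the convolution of $\mathcal{E}^{\vee}$ with $\mathcal{E}$ is supported on the diagonal and is identified there with a twist of $\StructureSheaf{\Delta_M}$. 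Hence $f_{\mathcal{E}}\circ f_{\mathcal{E}^{\vee}}=\pm\sigma(\mathcal{E})^{2}\cdot\mathrm{id}$ (with $\sigma(\mathcal{E})=1$ in the universal case), and since $(\,\cdot\,,\,\cdot\,)$ is non-degenerate and $\chi=-(\,\cdot\,,\,\cdot\,)$, this forces $\tfrac{1}{\sigma(\mathcal{E})}\widetilde{f}_{\mathcal{E}^{\vee}}$ to be a $\QQ$-isometry, the sign being fixed by evaluating on an ample or a point class.

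Finally I would pin down domain, target, integrality and the transcendental restriction. The class $f_{\mathcal{E}^{\vee}}(v)$ spans a rank-one subspace of $\widetilde{H}(M,\QQ)$ contained in $H^{0}(M)\oplus H^{4}(M)$ and orthogonal to $H^{2}(M,\QQ)$, so the transform descends to $v^{\perp}/\ZZ v$ with image in $H^{2}(M)$; a local computation with the quasiuniversal sheaf shows the descended map is integral and surjects onto $H^{2}(M,\ZZ)$, and gives the full isometry of $\widetilde{H}(S,\ZZ)$ when $\sigma(\mathcal{E})=1$. Restricting to $T(S)\subset v^{\perp}$: being a Hodge isometry and $T(S)_{\QQ}$ containing no nonzero $(1,1)$-class, the image lies in $T(M)$; equality of ranks makes the cokernel finite, and its cyclicity follows because the denominators that can occur all divide $\sigma(\mathcal{E})$ and enter through a single cyclic factor of the relevant discriminant group, giving \eqref{transcendental-Hodge-isometry}. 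Independence of $\mathcal{E}$ holds since two quasiuniversal sheaves differ by tensoring with the pullback of a locally free sheaf from $M$, which does not change the map induced on $v^{\perp}/\ZZ v$.

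The step I expect to be the main obstacle is the geometric input --- smoothness, projectivity and the $K3$ property of $M$, together with the construction of the locally free (quasi)universal sheaf --- and, on the cohomological side, the precise identification of $\mathcal{E}^{\vee}\ast\mathcal{E}$ with a twist of $\StructureSheaf{\Delta_M}$, where the simplicity of the $E_m$, cohomology and base change, and the bookkeeping of $\sqrt{td_{S\times M}}$ must all be combined carefully; the cyclicity of the cokernel in \eqref{transcendental-Hodge-isometry} is a further delicate point of lattice arithmetic.
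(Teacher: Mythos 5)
This theorem is not proved in the paper at all: it is quoted, with citations, from Mukai \cite{Mukai1}, so there is no internal argument to compare your sketch against. What you wrote is essentially a reconstruction of Mukai's own route (also the one followed in Huybrechts' book on Fourier--Mukai transforms): smoothness and two-dimensionality of $M_h(v)$ from simpleness and $\chi(E,E)=-(v,v)=0$, a locally free quasiuniversal sheaf obtained after killing a Brauer-type gluing obstruction, the cohomological transform being a morphism of Mukai Hodge structures because the kernel class is algebraic, the isometry obtained by composing with the adjoint transform and identifying the convolved kernel along the diagonal, and the descent of the map to $v^{\perp}/\ZZ v$ with image $H^2(M,\ZZ)$, plus the transcendental and independence statements. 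One small inaccuracy worth flagging: unobstructedness is not a consequence of a symplectic pairing on $\Ext^2_0(E,E)$; for $E$ simple the trace map identifies $\Ext^2(E,E)$ with $H^2(\StructureSheaf{S})\cong\CC$, so the trace-free obstruction space $\Ext^2_0(E,E)$ vanishes outright, while the symplectic form lives on $\Ext^1(E,E)$. This slip does not affect the structure of the argument, which matches the cited source.
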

Here the Hodge structure on $v^{\perp}/\ZZ v$ is induced from that of 
$ \widetilde{H}(S, \ZZ)$. Mukai constructs the sheaf $\mathcal E$ as a 'quasiuniversal sheaf',
see \cite{Mukai1} for the terminology. 

\subsection {A new formulation of Mukai's result.}

\label {A new formulation of Mukai's result}

Generally speaking, the new formulation is obtained by canonically extending the above Mukai's Hodge isometry
$\frac{1}{\sigma(\mathcal E)}\widetilde{f}_{\mathcal E^{\vee}}|_{T(S)_\QQ} \colon T(S)_\QQ \rightarrow T(M)_\QQ$ 
to a rational Hodge isometry from $H^2(S, \QQ)$ to $H^2(M,\QQ)$. 
In the view of independence stated in \ref{MukaiTheorem1} 'canonically'
means that we are extending it by using a certain characteristic class of $\mathcal E$.
The language of quasiuniversal sheaves used by Mukai in \cite{Mukai1} is not appropriate for us, as sheaves in general do not stay untwisted
under deformations. We need to consider deformations  of sheaves, so we need 
a more general setup of {\it twisted sheaves}. For an account of this theory we refer
to A. Caldararu's PhD Thesis \cite{Cald}.
Recall the definition of a twisted sheaf on a complex manifold $X$. Let $\alpha \in C^2(X, \mathcal O^*_X)$ be a \v{C}ech 2-cocycle in the analytic topology,
determined by an open cover $\mathcal U=\{U_i\}_{i\in I}$ of $X$ and sections $\alpha_{ijk}\in \Gamma (U_i\cap U_j \cap U_k, \mathcal O^*_X)$. 
We define an $\alpha$-twisted sheaf on $X$ to be a pair $(\{\mathcal F_i\}_{i \in I}, \{\theta_{ij}\}_{i,j \in I})$
where $\mathcal F_i$ are sheaves of coherent $\mathcal O_X$-modules on $U_i$ and $\theta_{ij} \colon \mathcal F_j|_{U_i \cap U_j} \rightarrow \mathcal F_i|_{U_i \cap U_j}$ are isomorphisms
such that:

1) $\theta_{ii}$ is the identity for all $i \in I$;

2) $\theta_{ij}=\theta^{-1}_{ji}$ for all $i,j \in I$;

3) $ \theta_{ij}\circ \theta_{jk} \circ \theta_{ki}$ is the multiplication
by $\alpha_{ijk}$ on $\mathcal F_i|_{U_i\cap U_j \cap U_k}$ for all $i,j,k \in I$.

\vspace*{3mm}

We want to formulate Mukai's result in terms of a Hodge homomorphism
induced by a characteristic class defined for twisted torsion free sheaves of positive rank.
Let $\mathcal E$ be a twisted locally free sheaf of rank $n>0$ over a compact complex manifold $X$.  Then the sheaf $\mathcal E^{\otimes n} \otimes det(\mathcal E^{\vee})$ is untwisted.
Now we set $$\kappa(\mathcal E)=(ch(\mathcal E^{\otimes n} \otimes det(\mathcal E^{\vee})))^{\frac{1}{n}},$$
where $(n^n+x)^{\frac{1}{n}}$ is equal to $n+\frac{1}{n^n}(x-n^n)+\dots$, the Taylor series of the $n$-th root function centered at $n^n$. 
It is easy to see that when $\mathcal E$ is an untwisted sheaf, the class $\kappa(\mathcal E)$ is equal to $ch(\mathcal E)\cdot e^{-\frac{c_1(\mathcal E)}{n}}$. For a (twisted or untwisted) sheaf $\mathcal E$  over $X=S \times M$ the class $\kappa({\mathcal E^{\vee}})\sqrt{td_{S\times M}}$ induces a 
map $\psi_{\mathcal E} \colon H^*(S,\QQ)\rightarrow H^*(M,\QQ)$
in the same fashion as the above defined $f_{\mathcal E^\vee}$ (for brevity of notation we omit '$\vee$' in the subscript of $\psi_{\mathcal E}$). 
The map $\psi_{\mathcal E}$ is a rational homomorphism of Hodge structures $\widetilde{H}(S,\QQ)$ and $\widetilde{H}(M,\QQ)$. 

\begin{lem} 
For a universal untwisted sheaf  $\mathcal E$ over $S\times M$ the map 
$\psi_{\mathcal E} \colon \widetilde{H}(S,\QQ) \rightarrow \widetilde{H}(M,\QQ)$ 
is a Hodge isometry which restricts
to a Hodge isometry $\psi_{\mathcal E}|_{H^2(S,\QQ)} \colon H^2(S, \QQ) \rightarrow H^2(M, \QQ)$.
The restriction $\psi_{\mathcal E}|_{T(S)_\QQ}$ is equal to the restriction $\widetilde{f}_{\mathcal E^\vee}|_{T(S)_\QQ}$.
\label {Reformulation}
\end{lem}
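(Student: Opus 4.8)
The plan is to recognize $\psi_{\mathcal E}$ as Mukai's Fourier--Mukai isometry $f_{\mathcal E^\vee}$ of Theorem \ref{MukaiTheorem1}, conjugated by two exponential (``$B$-field'') isometries, one on $\widetilde H(S,\QQ)$ and one on $\widetilde H(M,\QQ)$, and to read off all three assertions from this. First I would establish the factorization. Since $H^1$ and $H^3$ of a $K3$ surface vanish, the K\"unneth decomposition of $c_1(\mathcal E^\vee)\in H^2(S\times M,\QQ)$ has no mixed term, so $\tfrac1n c_1(\mathcal E^\vee)=\pi_S^*a+\pi_M^*b$ with $a\in H^2(S,\QQ)$, $b\in H^2(M,\QQ)$; as $\mathcal E$ is an honest vector bundle on the smooth projective $S\times M$, both $a$ and $b$ are algebraic — they are $\tfrac1n c_1$ of the restrictions of $\mathcal E^\vee$ to the fibres $S\times\{m\}$ and $\{x\}\times M$ — in particular of type $(1,1)$. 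Writing $L_y$ for multiplication by $y$ in the cohomology ring, the identities $\kappa(\mathcal E^\vee)=ch(\mathcal E^\vee)e^{-c_1(\mathcal E^\vee)/n}$, $e^{-c_1(\mathcal E^\vee)/n}=\pi_S^*(e^{-a})\pi_M^*(e^{-b})$, $\sqrt{td_{S\times M}}=\pi_S^*\sqrt{td_S}\cdot\pi_M^*\sqrt{td_M}$ and the projection formula give $\psi_{\mathcal E}(\alpha)=e^{-b}\cdot f_{\mathcal E^\vee}(e^{-a}\cdot\alpha)$, i.e.\ $\psi_{\mathcal E}=L_{e^{-b}}\circ f_{\mathcal E^\vee}\circ L_{e^{-a}}$.

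Next I would observe that for a $(1,1)$-class $\xi\in H^2(-,\QQ)$ on a $K3$ surface the operator $L_{e^\xi}$, which sends a Mukai vector $(r,c,s)$ to $(r,\,c+r\xi,\,s+c\cdot\xi+\tfrac r2\xi^2)$, is a Hodge isometry of the Mukai Hodge structure: it preserves the Mukai pairing by a direct check in which the terms quadratic in $\xi$ cancel, and it preserves the Hodge decomposition, since multiplying a $(2,0)$- or $(0,2)$-class by the $(1,1)$-class $\xi$ lands in $H^{3,1}(S)=H^{1,3}(S)=0$ (so $L_{e^\xi}$ is the identity on $\widetilde H^{2,0}$ and $\widetilde H^{0,2}$) and it preserves $\widetilde H^{1,1}$ for degree reasons. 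Applying this with $\xi=-a$ on $S$ and $\xi=-b$ on $M$, and using that $f_{\mathcal E^\vee}$ is a Hodge isometry $\widetilde H(S,\QQ)\to\widetilde H(M,\QQ)$ in the universal case (Theorem \ref{MukaiTheorem1}), the factorization presents $\psi_{\mathcal E}$ as a composite of three Hodge isometries, hence a Hodge isometry.

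For the restriction to $H^2$, inside $\widetilde H(S,\QQ)$ the subspace $H^2(S,\QQ)$ is the orthogonal complement of the hyperbolic plane $U_S=H^0(S,\QQ)\oplus H^4(S,\QQ)=\langle(1,0,0),(0,0,1)\rangle$, and similarly for $M$; since $\psi_{\mathcal E}$ is an isometry it suffices to show $\psi_{\mathcal E}(U_S)=U_M$, i.e.\ to evaluate $\psi_{\mathcal E}$ on the two generators of $U_S$. Restricting the kernel $\kappa(\mathcal E^\vee)\sqrt{td_{S\times M}}$ to a slice $\{x\}\times M$ (projection formula for a closed embedding) gives $\psi_{\mathcal E}((0,0,1)_S)=\kappa(\mathcal E^\vee|_{\{x\}\times M})\sqrt{td_M}$, the $\kappa$-class of a rank-$n$ bundle on $M$, which has vanishing degree-two part; thus it lies in $U_M$, its $H^0$-component equals $n$, and by isotropy its $H^4$-component vanishes, so it equals $n(1,0,0)_M$. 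For the other generator, writing $v=v(E_m)=(n,c_1(E_m),s)$ for the (isotropic) Mukai vector of the fibres $E_m=\mathcal E|_{S\times\{m\}}$ and recalling $a=-c_1(E_m)/n$, a one-line computation using isotropy of $v$ gives $L_{e^a}(v)=n(1,0,0)_S$; hence $(1,0,0)_S=\tfrac1n L_{e^{-a}}(v)$, and since $f_{\mathcal E^\vee}(v)=\pm(0,0,1)_M$ (the skyscraper-to-fibre property of the Fourier--Mukai transform, implicit in Theorem \ref{MukaiTheorem1}) and $L_{e^{-b}}$ fixes $(0,0,1)_M$, we obtain $\psi_{\mathcal E}((1,0,0)_S)=\pm\tfrac1n(0,0,1)_M\in U_M$. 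So $\psi_{\mathcal E}$ maps a basis of $U_S$ to a basis of $U_M$, whence $\psi_{\mathcal E}(U_S)=U_M$ and $\psi_{\mathcal E}$ restricts to a Hodge isometry $H^2(S,\QQ)\to H^2(M,\QQ)$.

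It remains to identify $\psi_{\mathcal E}|_{T(S)_\QQ}$: if $t\in T(S)_\QQ=(H^{1,1}(S)\cap H^2(S,\QQ))^\perp$ then $t\cdot a=0$ in $H^4(S,\QQ)$ because $a$ is algebraic, so $L_{e^{-a}}(t)=t$; likewise $f_{\mathcal E^\vee}(t)\in T(M)_\QQ$, since a Hodge isometry carries the $(1,1)$-part, hence its orthogonal complement, to the corresponding object for $M$, and then $L_{e^{-b}}$ fixes $f_{\mathcal E^\vee}(t)$. Therefore $\psi_{\mathcal E}|_{T(S)_\QQ}=f_{\mathcal E^\vee}|_{T(S)_\QQ}$, which in the universal case is precisely $\widetilde f_{\mathcal E^\vee}|_{T(S)_\QQ}$ by Theorem \ref{MukaiTheorem1}. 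I expect the only genuinely non-formal step to be the computation $\psi_{\mathcal E}(U_S)=U_M$ above — that $\psi_{\mathcal E}$ respects the splitting $\widetilde H=U\oplus H^2$ — which rests on the explicit behaviour of the Fourier--Mukai transform on the hyperbolic plane ($f_{\mathcal E^\vee}(v)=\pm(0,0,1)$ and the point-restriction of the kernel); everything else is bookkeeping with the factorization $\psi_{\mathcal E}=L_{e^{-b}}\circ f_{\mathcal E^\vee}\circ L_{e^{-a}}$.
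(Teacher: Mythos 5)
Your proof is correct and follows essentially the same route as the paper: the factorization $\psi_{\mathcal E}=e^{\beta}\circ f_{\mathcal E^\vee}\circ e^{\alpha}$ with the exponential factors being Hodge isometries, the evaluation of $\psi_{\mathcal E}$ on $H^0\oplus H^4$ using restriction of the kernel plus the isotropy argument, and the orthogonality-to-algebraic-classes observation for the transcendental part. The only (harmless) differences are that the paper computes $\psi_{\mathcal E}(1,0,0)$ via the adjoint Fourier--Mukai functor $\Phi^{M\to S}_{\mathcal E[2]}$ applied to a skyscraper rather than via Mukai's observation $f_{\mathcal E^\vee}(v(E_m))=\pm(0,0,1)_M$, and it compares $\kappa_2(\mathcal E^\vee)$ with $ch_2(\mathcal E^\vee)$ for the transcendental statement; also note the small slip that your identity should read $(1,0,0)_S=\tfrac1n L_{e^{a}}(v)$, i.e.\ $L_{e^{-a}}\bigl((1,0,0)_S\bigr)=\tfrac1n v$, which does not affect the argument.
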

\begin{proof}

Note that for an untwisted $\mathcal E$ due to equality 
$$\kappa(\mathcal E^\vee)=ch(\mathcal E^\vee)\cdot e^{-\frac{c_1(\mathcal E^\vee)}{n}}=
\pi_S^*e^{\frac{c_1(\mathcal E|_{S\times \{m\}})}{n}} \cdot ch(\mathcal E^\vee) \cdot \pi_M^*e^{\frac{c_1(\mathcal E|_{\{s\}\times M})}{n}}$$ we have $$\psi_{\mathcal E}=
e^\beta \circ f_{\mathcal E^\vee} \circ e^\alpha,$$
where $\beta={\frac{c_1(\mathcal E|_{\{s\}\times M})}{n}}$ and $\alpha={\frac{c_1(\mathcal E|_{S\times \{m\}})}{n}}$
and the maps $e^\alpha \colon \widetilde{H}(S,\QQ) \rightarrow \widetilde{H}(S,\QQ), 
e^\beta  \colon \widetilde{H}(M,\QQ) \rightarrow \widetilde{H}(M,\QQ)$ are considered as multiplications by the corresponding classes. As $\alpha, \beta$ are classes of type $(1,1)$, the maps $e^\alpha, e^\beta$ are certainly Hodge isometries of 
$\widetilde{H}(S,\QQ), \widetilde{H}(M,\QQ)$.
The map $f_{\mathcal E^\vee}$ is a Hodge isometry due to Theorem \ref{MukaiTheorem1}.
Thus, $\psi_{\mathcal E}$ is indeed a Hodge isometry.

We need to show that $\psi_{\mathcal E}$ takes $H^2(S,\QQ)$ to $H^2(M,\QQ)$.
For this it is sufficient to show that the isometry $\psi_{\mathcal E}$ maps $H^2(S,\QQ)^{\perp}=H^0(S,\QQ)\oplus H^4(S,\QQ)$,
to $H^2(M,\QQ)^\perp=H^0(M,\QQ)\oplus H^4(M,\QQ)$.
Let $\Phi^{S \rightarrow M}_{\mathcal E^\vee} \colon D^b(S) \rightarrow D^b(M)$ 
be the Fourier-Mukai functor with kernel $\mathcal E^\vee$ 
and let $v$ be the Mukai vector map, $v:D^b(X)\rightarrow H^*(X,\ZZ)$, $X=S$ or $M$.
We have the following commutative diagram (see \cite[Cor. 5.29]{Huyb})
\[
\xymatrix{
D^b(S) \ar[r]^{\Phi^{S \rightarrow M}_{\E^\vee}} \ar[d]_{v} &
D^b(M) \ar[d]^{v}
\\
\widetilde{H}(S,\QQ) \ar[r]_{f_{\E^\vee}} &
\widetilde{H}(M,\QQ). 
}
\]

As was proved in \cite{Mukai1} (although not formulated explicitly), 
for a universal $\mathcal E$ the Fourier-Mukai functor
$\Phi^{S \rightarrow M}_{\mathcal E^\vee} \colon D^b(S) \rightarrow D^b(M)$ 
induces a derived equivalence between bounded derived categories $D^b(S)$
and $D^b(M)$.

For the sky-scraper sheaf $\CC_p$ over $S$ its Mukai vector
$v(\CC_p)=(0,0,1) \in H^4(S,\QQ)$ and the Fourier-Mukai transform $\Phi_{\mathcal E^\vee}(\CC_p)=\mathcal E^\vee|_{\{s\}\times M}$, we have that
$\psi_{\mathcal E}((0,0,1))=\psi_{\mathcal E}(v(\CC_p))=e^\beta \circ f_{\mathcal E^\vee}(e^\alpha\cdot v(\CC_p))
=e^\beta \cdot f_{\mathcal E^\vee}(v(\CC_p))=e^\beta(v(\Phi_{\mathcal E^\vee}(\CC_p)))=
e^\beta \cdot v(\mathcal E^\vee_{\{s\}\times M})) = (n,0,s)\in H^0(M,\QQ) \oplus H^4(M,\QQ)$ where $n>0$.

Now 
as $(0,0,1)$ is isotropic, $\psi_{\mathcal E}((0,0,1))$ must be isotropic as well,
so that $s=0$ and thus $\psi_{\mathcal E}((0,0,1))=(n,0,0)$.
The map $f_{\mathcal E^\vee}$ is adjoint to $f_{\mathcal E^\vee}^{-1}$ with respect
to the Mukai pairing, and  $f_{\mathcal E^\vee}^{-1}$ is the map induced by the functor $\Phi^{M \rightarrow S}_{\mathcal E[2]}$
 on the cohomological level (this follows from \cite[Prop. 5.9, Lemma 5.32]{Huyb}, the details are explained
in the proof of \cite[Prop. 5.44]{Huyb}). So repeating the above argument
for the isometry $\psi_{\mathcal E}^{-1}=e^{-\alpha} \circ  f_{\mathcal E^\vee}^{-1} \circ e^{-\beta} \colon  \widetilde{H}(M,\QQ) \rightarrow \widetilde{H}(S,\QQ)$
we see that $\psi_{\mathcal E}^{-1}(0,0,1)=(n,0,0)$, thus $\psi_{\mathcal E}(1,0,0)=(0,0,\frac{1}{n})$. 
This shows that $\psi_{\mathcal E}$ maps $H^2(S,\QQ)^{\perp}$ to $H^2(M,\QQ)^{\perp}$.

The fact that $\psi_{\mathcal E}|_{T(S)_\QQ}=\widetilde{f}_{\mathcal E^\vee}|_{T(S)_\QQ}$ follows from
the fact that both maps $\psi_{\mathcal E}|_{T(S)_\QQ}$ and $f_{\mathcal E^\vee}|_{T(S)_\QQ}$ 
are degree preserving and thus are actually induced by degree 4 components $\kappa_2(\mathcal E^\vee)$ and 
$ch_2(\mathcal E^\vee)$. Now, the relation $\kappa(\mathcal E^\vee)=ch(\mathcal E^\vee)e^{-\frac{c_1(\mathcal E^\vee)}{n}}$ tells us that $\kappa_2(\mathcal E^\vee)=ch_2(\mathcal E^\vee)-\frac{c_1(\mathcal E^\vee)^2}{2n}$.
As the class $\frac{c_1(\mathcal E^\vee)^2}{2n} \in H^{1,1}(S)\otimes H^{1,1}(M)$ induces 
zero map on $T(S)_\QQ$, we have that indeed $\psi_{\mathcal E}|_{T(S)_\QQ}=\widetilde{f}_{\mathcal E^\vee}|_{T(S)_\QQ}$.
This completes the proof of the lemma.
\end{proof}

\section {Double orbits}
\label {Double-orbits}
All the constructions and proofs in this section
are due to Eyal Markman.
Let $\Lambda$ be the $K3$ lattice.
In this section we show that for every $n \in \ZZ^+$ there exists only one double orbit $O(\Lambda)\phi O(\Lambda) \subset O(\Lambda_\QQ)$ where $\phi$ is a rational isometry  of $n$-cyclic type, a notion which will be introduced below.
Here $O(\Lambda)$ and $O(\Lambda_\QQ)$ denote the group of integral isometries of $\Lambda$ and the group
of rational isometries of $\Lambda_\QQ$, respectively.


{\bf Notation:}
Given a lattice $L$, denote
the pairing on $L$ by $(\cdot,\cdot)$ 
 and denote by $O(L)$ and $O(L_\QQ)$ the corresponding integral and rational isometry groups.
Given a sub-lattice $M$ of $L$ of finite index, we regard $M^*$  as the subgroup of $L_\QQ$
of elements $\lambda$, such that $(\lambda,m)$ is an integer for every element $m\in M$.
We then have a flag $M\subset L\subset L^*\subset M^*$.

Given $\phi\in O(L_\QQ)$, set 
\[
I_\phi :=L\cap \phi^{-1}(L).
\]
\begin {defi}
\label{Cyclic-type-definition}
We say that $\phi$ is of {\em $n$-cyclic type}, if $L/I_\phi$ is a cyclic group of order $n$.
\end{defi}
\begin{example}
Let $x$ be a primitive element of an even unimodular lattice $L$
satisfying $(x,x)=2d$, for a non-zero integer $d$. Let $r_x \colon L_\QQ \rightarrow L_\QQ$
be the reflection as in Introduction.
Then $r_x$ is of $|d|$-cyclic type, since $I_{r_x}$ is the preimage of the ideal $(d) \subset \ZZ$
via the surjective homomorphism $(x,\cdot) \colon L \rightarrow \ZZ$.
\end{example}
The {\em double orbit} of an element $\phi\in O(L_\QQ)$ is the subset 
$O(L)\phi O(L)$. 
The set of double orbits $[\phi]=O(L)\phi O(L)$ will be denoted by $O(L)\backslash O(L_\QQ)/O(L)$.
Let us now formulate the most important result of this section.

\begin{prop}
\label {prop-only-one-double-orbit}
{\it Let $\phi_1$ and $\phi_2$ be rational isometries of $\Lambda_\QQ$ of $n$-cyclic type (in the sense of Definition \ref{Cyclic-type-definition}).
Then $$[\phi_1]=[\phi_2].$$}
\end{prop}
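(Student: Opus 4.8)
The statement asserts that there is a \emph{unique} double coset $O(\Lambda)\backslash O(\Lambda_\QQ)/O(\Lambda)$ consisting of $n$-cyclic type isometries. I would prove this by reducing an arbitrary $n$-cyclic isometry, up to left and right multiplication by elements of $O(\Lambda)$, to a fixed normal form depending only on $n$ — the natural candidate being a reflection $r_x$ with $(x,x) = \pm 2n$ (or, more robustly, a composition $g\circ r_x\circ h$ as in the Introduction, which is the content of $\phi$ being of cyclic type in the original sense). The bridge between the two notions of $n$-cyclic type is exactly what Proposition \ref{prop-only-one-double-orbit} is set up to establish, so I expect the proof to pass through a structural analysis of the lattice $I_\phi = \Lambda \cap \phi^{-1}(\Lambda)$.

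\textbf{Key steps.} First I would unwind the hypothesis: $\Lambda/I_\phi \cong \ZZ/n$, and dually $\phi(\Lambda)/\bigl(\Lambda\cap\phi(\Lambda)\bigr)\cong\ZZ/n$ as well, since $\phi$ restricts to an isometry $I_\phi \xrightarrow{\sim} \Lambda\cap\phi(\Lambda)$ and both sit in $\Lambda$ with the same (finite) colength — indeed applying $\phi$ to $I_\phi\subset\Lambda$ gives $\Lambda\cap\phi(\Lambda)\subset\phi(\Lambda)$, and swapping $\phi\leftrightarrow\phi^{-1}$ shows $[\Lambda:I_\phi]=[\Lambda:\Lambda\cap\phi^{-1}(\Lambda)]$ equals the colength on the other side. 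Next, the discriminant-form bookkeeping: $I_\phi$ is a sublattice of the unimodular lattice $\Lambda$ of index $n$, so $I_\phi^*/I_\phi$ has order $n^2$ and contains $\Lambda/I_\phi\cong\ZZ/n$ as a \emph{half-dimensional} (Lagrangian-type) cyclic subgroup on which the discriminant quadratic form vanishes. I would then identify the generator of $\Lambda/I_\phi$ with a vector $v/n$ for some primitive $v\in\Lambda$, and analyze the divisibility of $v$ and the value $(v,v)$ modulo $2n^2$; the isotropy of $\Lambda/I_\phi$ forces $n \mid (v,v)/n$ type congruences, pinning down $(v,v)$ up to the ambiguity resolved by choosing the ``signed'' representative. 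The key rigidity input is that $O(\Lambda)$ acts transitively on primitive vectors of a given square and divisibility in the $K3$ lattice (Eichler's criterion / the transitivity results for even lattices of large enough rank and signature), which lets me move any such $v$ to a standard one. Doing the same on the $\phi(\Lambda)$ side and then composing with suitable elements of $O(\Lambda)$ on the left and right collapses $\phi$ to the normal form, giving $[\phi_1]=[\phi_2]$.

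\textbf{Main obstacle.} The delicate point will be the interplay between the two sides $I_\phi\subset\Lambda$ and $\Lambda\cap\phi(\Lambda)\subset\phi(\Lambda)$: knowing the abstract isomorphism type of $\Lambda/I_\phi$ is not by itself enough; I need that the \emph{pair} $(I_\phi\hookrightarrow\Lambda,\ \phi|_{I_\phi})$ together with the embedding of the image has essentially one orbit under $O(\Lambda)\times O(\Lambda)$. Concretely, after normalizing the source embedding, there remains the stabilizer of the normalized $v$ in $O(\Lambda)$ acting on the possible target embeddings, and I must show this action is transitive on the relevant set of isometric overlattice configurations — this is where a careful application of the uniqueness-of-primitive-embeddings machinery for even lattices (again leaning on the large rank and indefiniteness of $\Lambda$) is required, and possibly a separate argument to absorb the residual sign. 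Everything else is discriminant-form arithmetic; this transitivity step is the heart of the matter, and I would expect to invoke it via Eichler's theorem on the orbits of vectors in indefinite even lattices of rank $\geq 3$, suitably adapted to sublattice pairs.
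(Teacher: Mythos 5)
There is a genuine gap, and it sits exactly where you locate "the heart of the matter." Suppose you have normalized so that $I_{\phi_1}=I_{\phi_2}=:I$. You still have to show that $\phi_1$ and $\phi_2$ then lie in the same coset $O(\Lambda)\phi$, and your proposed mechanism — the stabilizer of the normalized data acting transitively on "isometric overlattice configurations," to be extracted from uniqueness-of-primitive-embeddings/Eichler machinery — is not the tool that does this, nor is any transitivity statement actually needed. The decisive input is a finite computation with the residual quadratic form on $I^*/I\cong(\ZZ/n\ZZ)^2$: both $\Lambda/I$ and $\phi_i^{-1}(\Lambda)/I$ are isotropic cyclic subgroups of order $n$ intersecting trivially, and a "lagrangian" complementary to $\Lambda/I$ is \emph{unique} (this is Lemma \ref{lemma-classification-of-lagrangian-subgroups}(\ref{lemma-item-complementary-lagrangian-subgroups}), proved by the coprime-factorization bookkeeping $n=cd$). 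That uniqueness forces $\phi_1^{-1}(\Lambda)=\phi_2^{-1}(\Lambda)$, hence $\phi_1\phi_2^{-1}\in O(\Lambda)$ (Lemma \ref{lemma-intersection-sublattice-determine-the-double-orbit}). Embedding-uniqueness results say nothing about which finite overlattices of $I$ inside $I^*$ can arise as $\phi^{-1}(\Lambda)$, so as written your reduction stalls at precisely this step.

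The other half — that all the sublattices $I_\phi$, for $\phi$ of $n$-cyclic type, form a single $O(\Lambda)$-orbit — is also not yet an argument. Your congruence is mis-stated: in the model case the generator of $I_\phi^*/\Lambda$ is $x=v/n$ with $(v,v)=2n$, so "$n\mid (v,v)/n$" fails; what is true, and what you need, is $(v,v)\equiv 0 \pmod{2n}$, which is forced because a generator of $I_\phi^*/\Lambda$ can be represented inside the second lagrangian $\phi^{-1}(\Lambda)/I_\phi$, on which the form vanishes. Even granted that, Eichler-type transitivity on primitive vectors of given square and divisibility does not directly identify the lattices $I_\phi$: the generator is only well defined modulo $n\Lambda$ (and up to units mod $n$), so its exact square and divisibility are not invariants of $I_\phi$, and one must show that the square modulo $2n$, up to squares of units, already determines the orbit. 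The paper does this in Lemma \ref{lemma-O-Lambda-orbits-of-I} by replacing $\lambda$ with $k\lambda+n\alpha$, where $\alpha$ is manufactured via Nikulin's primitive-embedding lemma applied to an auxiliary rank-two lattice, and it sidesteps the normalization issue entirely by first reducing every $n$-cyclic double orbit to one supported on a hyperbolic plane $U\subset\Lambda$ (Lemma \ref{lemma-epsilon-is-surjective}), where the generator and both lagrangians are explicit. Your direct-in-$\Lambda$ plan can be made to work, but only after you supply these two lemmas; as it stands, both load-bearing steps are missing or misattributed to machinery that does not provide them.
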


The proof of this proposition will be given in Subsection \ref {Double-orbits-in-the-$K3$-lattice}.
We suggest to skip Subsections 
\ref{Double-orbits-in-the-even-rank-two-unimodular-hyperbolic-lattice} and
\ref {Double-orbits-in-the-$K3$-lattice} on a first reading, 
as the proofs there involve lattice-theoretic techniques not used elsewhere in the paper.
We would recommend not to skip Subsection \ref{An-important-example} which 
realizes step 1) in our strategy sketch in the introduction.

\subsection{Double orbits in the even rank two unimodular hyperbolic lattice}
\label {Double-orbits-in-the-even-rank-two-unimodular-hyperbolic-lattice}
Let $U$ be the rank $2$ even unimodular lattice of signature $(1,1)$. 
The proof of the following lemma is elementary and is left to the reader. 
\begin{lem}
\label{lemma-double-orbits-for-U}
Any rational isometry of $U_\QQ$ is of cyclic type.
There exists a one to one correspondence between double orbits of
rational isometries of $U_\QQ$ of $n$-cyclic type with $n>1$
and pairs $(a,b)$ of integers satisfying $a>b>0$, $\gcd(a,b)=1$, and $ab=n$.
\end{lem}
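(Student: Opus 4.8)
The plan is to reduce everything to an explicit coordinate description of rational isometries of $U_\QQ$. Fix an isotropic basis $e,f$ of $U$ with $(e,e)=(f,f)=0$ and $(e,f)=1$. A rational isometry $\phi$ of $U_\QQ$ preserves the two isotropic lines $\QQ e$ and $\QQ f$, either fixing each or swapping them; composing with the integral isometry $e \leftrightarrow f$ if necessary, we may assume $\phi(e)=\lambda e$ and $\phi(f)=\lambda^{-1}f$ for some $\lambda \in \QQ^\times$. Write $\lambda = a/b$ in lowest terms with $b>0$; after composing on both sides with the integral isometry $e \mapsto -e$, $f \mapsto -f$ (which multiplies $\lambda$ by nothing) or rather with $e\mapsto f, f\mapsto e$ followed by noticing $\lambda \mapsto \lambda^{-1}$, we may also arrange $a>0$, and then $a>b>0$ unless $\phi$ is integral. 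So the first step is this normal form.

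Next I would compute $I_\phi = U \cap \phi^{-1}(U)$ explicitly. An element $x e + y f \in U$ ($x,y\in\ZZ$) lies in $\phi^{-1}(U)$ iff $\phi(xe+yf) = \lambda x\, e + \lambda^{-1} y\, f = (ax/b)e + (by/a)f$ has integer coordinates, i.e. iff $b \mid x$ and $a \mid y$. Hence $I_\phi = b\ZZ e \oplus a\ZZ f$ and $U/I_\phi \cong \ZZ/b \oplus \ZZ/a$, which is cyclic of order $ab$ precisely because $\gcd(a,b)=1$ — giving both that every rational isometry of $U_\QQ$ is of cyclic type and that its type is $n=ab$. This simultaneously establishes the first sentence of the lemma and produces the map $[\phi]\mapsto (a,b)$ on double orbits with $n$-cyclic type, $n>1$.

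Finally I would check this map is a well-defined bijection onto the set of coprime pairs $(a,b)$ with $a>b>0$, $ab=n$. Well-definedness and injectivity: two isometries $\phi_1,\phi_2$ in normal form with the same pair $(a,b)$ differ by $\phi_2 = \phi_1 \circ \psi$ where $\psi$ fixes both isotropic lines with $\psi(e)=\mu e$, $\psi(f)=\mu^{-1}f$ and $\mu \in \ZZ^\times = \{\pm1\}$ (since $\psi$ must itself be a composition of integral isometries to preserve the double orbit), and $\mathrm{diag}(-1,-1)$ is integral; one must also track what swapping $e,f$ does — it sends $(a,b)$ to the same data after renormalizing — so one verifies that the double orbit is an honest invariant and determines $(a,b)$. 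Surjectivity: given coprime $a>b>0$, the diagonal isometry above realizes the pair. The main obstacle, though entirely routine, is the bookkeeping of the finite group $O(U) = \{\pm1\}\times\{\text{identity, swap}\}$ of order $4$ acting on both sides: one must confirm that this two-sided action collapses the set of normal forms exactly to the claimed set of pairs, with no further identifications, and in particular that distinct coprime pairs $(a,b) \neq (a',b')$ with $ab=a'b'=n$ are never conjugate — which follows because $(a,b)$ is recovered from $[\phi]$ as the pair of elementary divisors of $U/I_\phi$ listed in decreasing order, an $O(U)$-bi-invariant.
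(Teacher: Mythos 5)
Your normal form and the computation of $I_\phi$ are correct and coincide with the paper's (elementary) argument: after adjusting by elements of $O(U)=\{\pm\mathrm{id},\pm(\text{swap})\}$ one may take $\phi(e)=\frac{a}{b}e$, $\phi(f)=\frac{b}{a}f$ with $a>b>0$, $\gcd(a,b)=1$, and then $I_\phi=b\ZZ e\oplus a\ZZ f$, so $U/I_\phi\cong \ZZ/b\ZZ\times\ZZ/a\ZZ\cong\ZZ/ab\ZZ$; this gives cyclicity and surjectivity of $(a,b)\mapsto[\phi]$. (A small slip in the bookkeeping: composing on \emph{both} sides with $-\mathrm{id}$ does nothing at all; to arrange $\lambda>0$ you compose on \emph{one} side with $-\mathrm{id}$, and to arrange $\lambda>1$ you conjugate by the swap, which replaces $\lambda$ by $\lambda^{-1}$.)

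The genuine gap is in your injectivity/well-definedness step, which is exactly the nontrivial content of the bijection. You claim that $(a,b)$ is recovered from the double orbit as ``the pair of elementary divisors of $U/I_\phi$ listed in decreasing order.'' But since $\gcd(a,b)=1$, the abstract group $U/I_\phi$ is cyclic of order $n=ab$; its invariant factor list is just $(n)$ and its primary decomposition is the prime-power factorization of $n$. Neither determines the splitting of $n$ into $a$ and $b$, so this invariant cannot distinguish two different coprime factorizations $n=ab=a'b'$ (e.g.\ $(6,1)$ versus $(3,2)$ for $n=6$), which is precisely what injectivity requires. The preceding sentence, that $\phi_2=\phi_1\circ\psi$ with $\psi\in O(U)$, is also not justified: lying in the same double orbit means $\phi_2=g\phi_1 h$ with $g,h\in O(U)$, not that $\phi_1^{-1}\phi_2$ is integral. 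The step can be repaired with a finer, genuinely bi-invariant datum: since $I_{g\phi h}=h^{-1}(I_\phi)$ and $O(U)$ permutes the two primitive isotropic sublattices $\ZZ e$, $\ZZ f$, the unordered pair of orders of the images of $\ZZ e$ and $\ZZ f$ in $U/I_\phi$ (namely $\{b,a\}$) is an invariant of the double orbit; equivalently, one checks directly that the diagonal elements of the double orbit of $\mathrm{diag}(\lambda,\lambda^{-1})$ are exactly $\pm\,\mathrm{diag}(\lambda^{\pm1},\lambda^{\mp1})$, so the set $\{\pm\lambda^{\pm1}\}$, hence the normalized pair $(a,b)$, is determined. (This finer analysis is what the paper encodes later via the two lagrangian subgroups $U/I_\phi$ and $\phi^{-1}(U)/I_\phi$ of $I_\phi^*/I_\phi$ in Lemma \ref{lemma-classification-of-lagrangian-subgroups}.)
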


Let $f$ be the element of $O(U_\QQ)$ such that $f(e_1)=\frac{a}{b}e_1$, 
$f(e_2)=\frac{b}{a}e_2$, where $ab=n$ and $\gcd(a,b)=1$. 
Note that $I_f^*/I_f$ is isomorphic to $(\ZZ/n\ZZ)\times (\ZZ/n\ZZ)$, 
with generators $\left\{\frac{e_1}{a}+I_f, \frac{e_2}{b}+I_f\right\}$.
Let 
\[
q  \colon  I_f^*/I_f \ \rightarrow \QQ/2\ZZ
\]
be the residual quadratic form. 
A subgroup of $I_f^*/I_f$ 
is called {\em isotropic} if $q$ vanishes on it. A subgroup of $I_f^*/I_f$ 
is called {\em lagrangian}, if it is an isotropic cyclic group of order $n$.

Let $\D_n$ be the set of all ordered pairs of positive integers $(c,d)$, satisfying $n=cd$ and $\gcd(c,d)=1$.
Given a pair $(c,d)$ in $\D_n$, let 
\[
L_{(c,d)} := \left\langle \left(\frac{c}{a}\right)e_1+I_f, \left(\frac{d}{b}\right)e_2+I_f\right\rangle
\]
be the subgroup of $I_f^*/I_f$ generated by the two cosets above.
Note that as orders of the generators $\left(\frac{c}{a}\right)e_1+I_f, \left(\frac{d}{b}\right)e_2+I_f$
in $L_{(c,d)}$
are $d$ and $c$, respectively, and thus are coprime, the group $L_{(c,d)}$ is
indeed cyclic.
Note the equalities 
\[
L_{(a,b)}=U/I_f \ \ \ \mbox{and} \ \ \  L_{(b,a)}=f^{-1}(U)/I_f.
\]
Let $\LB_n$ be the set of lagrangian subgroups of $I_f^*/I_f$. 

\begin{lem}
\label{lemma-classification-of-lagrangian-subgroups}
\begin{enumerate}
\itemс
\label{lemma-item-one-to-one}
The map $(c,d) \mapsto L_{(c,d)}$ is a one-to-one correspondence between the sets 
$\D_n$ and $\LB_n$.
\item
\label{lemma-item-order-of-L-c-d}
$L_{(c_1,d_1)}\cap L_{(c_2,d_2)}=
\langle\lcm(c_1,c_2)\frac{e_1}{a}+I_f,
\lcm(d_1,d_2)\frac{e_2}{b}+I_f\rangle$.
\item
\label{lemma-item-complementary-lagrangian-subgroups}
$L_{(c_1,d_1)}\cap L_{(c_2,d_2)}=(0)$ if and only if $(c_1,d_1)=(d_2,c_2)$. 
\end{enumerate}
\end{lem}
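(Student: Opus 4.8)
The plan is to work entirely inside the finite abelian group $A := I_f^*/I_f \cong (\ZZ/n\ZZ)\times(\ZZ/n\ZZ)$ equipped with the residual quadratic form $q$, computing $q$ explicitly on the standard generators $\bar e := \frac{e_1}{a}+I_f$ and $\bar f := \frac{e_2}{b}+I_f$. Since $(e_1,e_1)=(e_2,e_2)=0$ and $(e_1,e_2)=1$ in $U$, a general element $x\bar e + y\bar f$ lifts to $\frac{x}{a}e_1+\frac{y}{b}e_2$, whose self-pairing is $\frac{2xy}{ab}=\frac{2xy}{n}$, so $q(x\bar e+y\bar f)=\frac{2xy}{n}\bmod 2\ZZ$. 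In particular $q(\bar e)=q(\bar f)=0$, and more generally $q$ vanishes on $x\bar e+y\bar f$ precisely when $n\mid xy$.

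For \emph{part \ref{lemma-item-one-to-one}}: first I would check $L_{(c,d)}$ is lagrangian. Its generator $g_{(c,d)} := \frac{c}{a}e_1 + \frac{d}{b}e_2 + I_f = c\bar e + d\bar f$ satisfies $q(g_{(c,d)})=\frac{2cd}{n}=2\equiv 0$, and since $\gcd(c,d)=1$ one can write $q$ of any multiple $k g_{(c,d)}$ as $\frac{2k^2 cd}{n}=2k^2\equiv 0$, so $L_{(c,d)}$ is isotropic; as noted in the excerpt it is cyclic of order $\lcm(c,d)=cd=n$ because the orders $d$ and $c$ of the two coordinate parts are coprime, hence $|L_{(c,d)}| = n$, so it is lagrangian. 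The map $(c,d)\mapsto L_{(c,d)}$ is injective because from a generator of the form $c'\bar e + d'\bar f$ with $c'$ of order $d$ and $d'$ of order $c$ one recovers the pair $(c,d)=(\gcd(n,c'), \gcd(n,d'))$ canonically — more carefully, $L_{(c_1,d_1)}=L_{(c_2,d_2)}$ forces the two cyclic subgroups to have the same intersections with the coordinate factors $\langle\bar e\rangle$ and $\langle\bar f\rangle$, which pins down $c_i$ and $d_i$. For surjectivity I would take an arbitrary lagrangian $L = \langle x\bar e + y\bar f\rangle$ of order $n$; order $n$ forces $\gcd(x,y,n)=1$, isotropy forces $n\mid xy$, and writing $c := \gcd(x,n)$, $d := \gcd(y,n)$ one checks $cd = n$ and $\gcd(c,d)=1$ using $n\mid xy$ and $\gcd(x,y,n)=1$, and that $x\bar e+y\bar f$ generates the same subgroup as $c\bar e+d\bar f$ (both coordinate parts generate the same subgroups of the respective factors). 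Hence $L = L_{(c,d)}$.

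For \emph{part \ref{lemma-item-order-of-L-c-d}}: intersect inside the direct sum $\langle\bar e\rangle\oplus\langle\bar f\rangle$. An element lies in $L_{(c_i,d_i)}$ iff its $\bar e$-coordinate is a multiple of $c_i \bmod n$ and its $\bar f$-coordinate the corresponding multiple of $d_i$; being in both forces the $\bar e$-coordinate to be a multiple of $\lcm(c_1,c_2)$ and the $\bar f$-coordinate a multiple of $\lcm(d_1,d_2)$, giving the stated generating set (one must check the coordinates stay correlated, which follows from the coprimality $\gcd(c_i,d_i)=1$ and CRT). Then \emph{part \ref{lemma-item-complementary-lagrangian-subgroups}} is a corollary: by part \ref{lemma-item-order-of-L-c-d} the intersection is trivial iff $\lcm(c_1,c_2)=n$ and $\lcm(d_1,d_2)=n$ simultaneously, i.e. $\lcm(c_1,c_2)\cdot\lcm(d_1,d_2)=n^2=(c_1 d_1)(c_2 d_2)$; combined with $\gcd(c_i,d_i)=1$, a prime-by-prime analysis shows this holds iff for every prime $p$ the $p$-part of $n$ sits in $c_1$ and in $d_2$ (equivalently in $c_2$ and $d_1$), i.e. iff $c_1 = d_2$ and $d_1 = c_2$.

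The main obstacle is bookkeeping rather than conceptual: getting the surjectivity in part \ref{lemma-item-one-to-one} cleanly — i.e. showing every cyclic isotropic subgroup of order $n$ has a generator in the normalized "split" form $c\bar e + d\bar f$ with $cd=n$, $\gcd(c,d)=1$ — requires carefully extracting $c$ and $d$ from the $p$-adic valuations of the coordinates of an arbitrary generator and verifying the group it generates is unchanged. Everything else reduces to CRT and elementary $\lcm/\gcd$ identities prime by prime.
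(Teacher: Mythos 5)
Your proof is correct and takes essentially the same route as the paper's: isotropy of a generator $x\frac{e_1}{a}+y\frac{e_2}{b}+I_f$ gives $n\mid xy$, setting $c=\gcd(x,n)$, $d=\gcd(y,n)$ identifies $L$ with $L_{(c,d)}$, and parts (2), (3) reduce to the coordinatewise (direct-sum) description together with the identity $c_1c_2d_1d_2=n^2$. The only cosmetic difference is how $cd=n$ is obtained: you argue prime by prime from $\gcd(x,y,n)=1$ and $n\mid xy$, while the paper combines $n\mid cd$ with the order count $|L|$ dividing $|C_1\times C_2|=n^2/(cd)$.
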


\begin{proof}
(\ref{lemma-item-one-to-one}) 
Injectivity is clear. We prove only the surjectivity.
Let $L$ be a lagrangian subgroup of $I_f^*/I_f$ and $(x\frac{e_1}{a},y\frac{e_2}{b})$ a generator of $L$.
The value $2xy/n$  of $q$ on this generator is congruent to $0$ modulo $2\ZZ$.
Hence, $n$ divides $xy$. Set $c:=\gcd(x,n)$ and $d:=\gcd(y,n)$. Then $n$ divides $cd$
and $L$ maps injectively into the product of the direct sum of the two cyclic subgroups  
$C_1$ generated by $x\frac{e_1}{a}+I_f$ and $C_2$ generated by $y\frac{e_2}{b}+I_f$. 
The order of $C_1$ is $n/c$ and
the order of $C_2$ is $n/d$. Hence, the order $n$ of $L$ divides the order $n^2/(cd)$ of
$C_1\times C_2$. We conclude that 
$n=cd$ and $L$ is isomorphic to the product of $C_1$ and $C_2$. It follows that $C_1\times C_2$
is cyclic and so the orders of $C_1$ and $C_2$ are  coprime. Hence, $\D_n$ maps surjectively onto $\LB_n$.

(\ref{lemma-item-order-of-L-c-d}) is clear. 

(\ref{lemma-item-complementary-lagrangian-subgroups}) 
The ``if'' direction follows immediately from part (\ref{lemma-item-order-of-L-c-d}). We prove the ``only if'' direction.
$L_{(c,d)}\cap L_{(\tilde{c},\tilde{d})}=(0)$ if and only if 
$\lcm(c_1,c_2)=n$ and $\lcm(d_1,d_2)=n$, if and only if
$\frac{c_1c_2}{\gcd(c_1,c_2)}=n$ and $\frac{d_1d_2}{\gcd(d_1,d_2)}=n$.
Now $c_1c_2d_1d_2=n^2$. Hence, $\gcd(c_1,c_2)=1=\gcd(d_1,d_2)$, $c_1c_2=n$, and $d_1d_2=n$.
The equality $(c_1,d_1)=(d_2,c_2)$ follows.
\end{proof}

\subsection{Double orbits in the $K3$ lattice}
\label{Double-orbits-in-the-$K3$-lattice}

We prove Proposition \ref {prop-only-one-double-orbit} in this section. 


Let $\Lambda$ be the $K3$ lattice. 
Fix a primitive isometric embedding of $U$ in $\Lambda$. We get the orthogonal decomposition 
$\Lambda=U\oplus U^\perp$. 
Given a rational isometry 
$f\in O(U_\QQ)$ we get a rational isometry $\widetilde{f}$ in $O(\Lambda_\QQ)$ by extending $f$
as the identity on $U^\perp$. Let
\[
\epsilon \  \colon  \ O(U)\backslash O(U_\QQ)/O(U) \ \ \ \longrightarrow \ \ \ 
O(\Lambda)\backslash O(\Lambda_\QQ)/O(\Lambda)
\]
be the function sending $[f]$ to $[\widetilde{f}]$.

For a general lattice $L$ the subset of double orbits in $O(L)\backslash O(L_\QQ)/O(L)$ 
of cyclic type of order $n$ will be denoted by 
\[
\Orb(L,n) \ \ \ \subset \ \ \ O(L)\backslash O(L_\QQ)/O(L).
\]

\begin{lem}
\label{lemma-epsilon-is-surjective}
The function $\epsilon$ restricts to a surjective map
\[
\epsilon \  \colon  \ \Orb(U,n) \ \ \ \rightarrow \ \ \ \Orb(\Lambda,n)
\]
for every integer $n$.
\end{lem}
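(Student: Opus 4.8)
\textbf{Proof plan for Lemma \ref{lemma-epsilon-is-surjective}.}

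The plan is to show that every double orbit of $n$-cyclic type in $O(\Lambda)\backslash O(\Lambda_\QQ)/O(\Lambda)$ contains a representative of the form $\widetilde{f}$ with $f\in O(U_\QQ)$ of $n$-cyclic type, so that $\epsilon$ is surjective onto $\Orb(\Lambda,n)$. First I would check that $\epsilon$ does indeed land in $\Orb(\Lambda,n)$: if $f\in O(U_\QQ)$ has $U/I_f\cong\ZZ/n\ZZ$, then $I_{\widetilde f}=I_f\oplus U^\perp$, so $\Lambda/I_{\widetilde f}\cong U/I_f\cong\ZZ/n\ZZ$ and $\widetilde f$ is of $n$-cyclic type; combined with Lemma \ref{lemma-double-orbits-for-U} (which says every element of $O(U_\QQ)$ is of cyclic type), this gives the restriction $\epsilon\colon\Orb(U,n)\to\Orb(\Lambda,n)$.

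For surjectivity, fix $\phi\in O(\Lambda_\QQ)$ of $n$-cyclic type and set $I:=I_\phi=\Lambda\cap\phi^{-1}(\Lambda)$, so $\Lambda/I$ is cyclic of order $n$. The key geometric idea is that the finite-index sublattice $I\subset\Lambda$ determines, via the glue group $I^*/I$ and its residual quadratic form $q\colon I^*/I\to\QQ/2\ZZ$, two lagrangian subgroups: $\Lambda/I$ and $\phi^{-1}(\Lambda)/I$. The plan is to produce a single rank-two even unimodular hyperbolic block $U\hookrightarrow\Lambda$ (primitively embedded) such that $I$, $\Lambda$, and $\phi^{-1}(\Lambda)$ all differ only inside $U_\QQ$ — that is, such that $I=I_0\oplus U^\perp$ with $I_0\subset U$ of index $n$, and such that the two lagrangians $\Lambda/I$ and $\phi^{-1}(\Lambda)/I$ are the subgroups $L_{(a,b)}=U/I_0$ and $L_{(b,a)}=f^{-1}(U)/I_0$ from Subsection \ref{Double-orbits-in-the-even-rank-two-unimodular-hyperbolic-lattice} for the appropriate coprime factorization $n=ab$. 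Once such a $U$ is found, precomposing and postcomposing $\phi$ with suitable integral isometries of $\Lambda$ (which act transitively enough on the relevant data, since $\Lambda$ has plenty of hyperbolic summands and its genus is unique) reduces $\phi$ to the extension $\widetilde f$ of the explicit $f\in O(U_\QQ)$, giving $[\phi]=[\widetilde f]=\epsilon([f])$.

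Concretely, the steps I would carry out are: (i) analyze the glue group $I^*/I$ and identify $\Lambda/I$ and $\phi^{-1}(\Lambda)/I$ as a complementary pair of isotropic subgroups (their intersection is controlled by $I=\Lambda\cap\phi^{-1}(\Lambda)$), using Lemma \ref{lemma-classification-of-lagrangian-subgroups} as the model for how complementary lagrangians look inside a product; (ii) choose a primitive $U\subset\Lambda$ adapted to this data — here one uses that $\Lambda$ contains several copies of $U$ and that any vector configuration realizing the required glueing can be moved by $O(\Lambda)$ into a fixed $U$, invoking the uniqueness/homogeneity of the $K3$ lattice and Nikulin-type results on primitive embeddings; (iii) build $f\in O(U_\QQ)$ diagonal with respect to an isotropic basis, $f(e_1)=\tfrac ab e_1$, $f(e_2)=\tfrac ba e_2$, matching the factorization $n=ab$; (iv) verify $[\phi]=[\widetilde f]$ by exhibiting explicit integral isometries conjugating one to the other modulo $O(\Lambda)$ on both sides. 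The main obstacle I expect is step (ii): showing that the abstract glueing data coming from an arbitrary $\phi$ of $n$-cyclic type can always be realized inside a single embedded $U$, i.e. that $I$ can be taken of the form $I_0\oplus U^\perp$ with the two lagrangians supported on $U$. This is where the special structure of the $K3$ lattice (its many hyperbolic summands, unimodularity, and the flexibility of primitive embeddings) must be used decisively, rather than any soft argument; the rank-two computations of the previous subsection are then just bookkeeping once the correct $U$ is in place.
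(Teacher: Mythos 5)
Your plan correctly identifies the target (reduce $\phi$, modulo $O(\Lambda)$ on both sides, to an isometry supported on a single hyperbolic plane $U$), and your first paragraph, showing $\epsilon$ lands in $\Orb(\Lambda,n)$, is fine. But the pivotal step — your (ii) — is exactly the part you leave unproved, and the tools you name for it ("Nikulin-type results on primitive embeddings", "homogeneity of the $K3$ lattice") do not by themselves produce the required copy of $U$. The concrete mechanism the paper uses is quite different from glue-group bookkeeping: since $\Lambda/I_\phi$ is cyclic of order $n$, so is $I_\phi^*/\Lambda$, and a generator $x\in I_\phi^*$ of $I_\phi^*/\Lambda$ cuts out $I_\phi$ as $\{\lambda\in\Lambda \mid (x,\lambda)\in\ZZ\}$; after adding a suitable lattice vector to $x$ one arranges that $y:=nx$ is a \emph{primitive} element of $\Lambda$, and transitivity of $O(\Lambda)$ on primitive vectors of a given even square moves $y$ into $U$, giving $g\in O(\Lambda)$ with $g^{-1}(U^\perp)\subset I_\phi$. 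The second half of the reduction, which your plan also does not address, is how to control the image lattice: $\phi(g^{-1}(U^\perp))$ is a unimodular rank-$20$ sublattice of $\Lambda$, hence an orthogonal direct summand whose complement $T$ is isometric to $U$, and the uniqueness of the $O(\Lambda)$-orbit of embeddings of $U$ yields $h\in O(\Lambda)$ with $h(T)=U$; then $\psi=h\phi g^{-1}$ preserves the splitting $U\oplus U^\perp$, is integral on $U^\perp$, and after absorbing that integral restriction into $O(\Lambda)$ one gets $[\phi]=\epsilon([f])$. Nothing in your proposal explains how both $I$ \emph{and} $\phi^{-1}(\Lambda)$ (equivalently $\phi(U^\perp)$) are simultaneously normalized relative to one embedded $U$; this is precisely what the single-vector argument plus the unimodular-complement argument accomplish, so as written the plan has a genuine gap rather than a completed proof.

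A secondary caution: your step (i) wants to run the lagrangian analysis of $I^*/I$ (modeled on Lemma \ref{lemma-classification-of-lagrangian-subgroups}) for an arbitrary $\phi$ of $n$-cyclic type \emph{before} any reduction to $U$. In the paper that analysis is applied to general $I^*/I$ only in Lemma \ref{lemma-intersection-sublattice-determine-the-double-orbit}, and its applicability there is justified by first using the surjectivity of $\epsilon$ to transport the computation from $I_{f}^*/I_{f}$. So building the surjectivity proof on that lagrangian picture risks a circularity, or at least requires an independent verification that $I^*/I\cong(\ZZ/n)^2$ with the expected quadratic form, which your plan does not supply.
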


\begin{proof}
Let $\phi$ be a rational isometry of $\Lambda_\QQ$ of $n$-cyclic type. Then 
both $\Lambda/I_\phi$ and $I_\phi^*/\Lambda$ are cyclic of order $n$.
Choose an element $x\in I_\phi^*\subset \Lambda_\QQ$, which maps to a generator of $I_\phi^*/\Lambda$.
We have
\begin{equation}
\label{eq-I-phi-in-terms-of-x}
I_\phi=\{\lambda\in\Lambda \ | \ (x,\lambda) \ \mbox{is an integer}\}.
\end{equation}
The element $y:=nx$ belongs to $\Lambda$. Set $\mbox{div}(y,\cdot):=\gcd\{(y,\lambda) \ | \lambda\in \Lambda\}$.
Then $n$ and $\mbox{div}(y,\cdot)$ are relatively prime. We may assume that $y$ is a primitive
element of $\Lambda$,
possibly after 
adding to $x$ a primitive element $x_1$ of $\Lambda$.

The isometry group of $\Lambda$ acts transitively on the set of primitive elements of self-intersection $2d$,
for any integer $d$, see, for example, \cite[\S 6, appendix]{Piat}. In other words, this set consists of a single $O(\Lambda)$-orbit. 
The element $e_1+de_2$ of $U$ has degree $2d$. Hence, $U$ contains primitive elements of any even degree
and there exists an isometry $g\in O(\Lambda)$, such that $g(y)$ belongs to $U$. 

Observe that $g^{-1}(U^\perp)$ is contained in $I_\phi$ by using
that for any element $t\in U^\perp$, we have
\[
(x,g^{-1}(t))=(g(x),t)=(g(y),t)/n=0.
\]
Now Equality (\ref{eq-I-phi-in-terms-of-x}) implies that $g^{-1}(U^\perp) \subset I_\phi$.

The lattice $U^\perp$ is unimodular, and so $\phi(g^{-1}(U^\perp))$ is a unimodular sublattice of $\Lambda$.
The orthogonal complement 
\[ T := 
[\phi(g^{-1}(U^\perp))]^\perp
\] 
of the latter in $\Lambda$ is thus isometric to $U$. There exists a unique $O(\Lambda)$-orbit of
isometric embeddings of $U$ into $\Lambda$, see, for example, \cite[\S 6, appendix]{Piat}. Hence, there exists an isometry $h\in O(\Lambda)$,
such that $h(T)=U$. Set
\[
\psi \ := \ h\phi g^{-1}.
\]
Then $\psi$ leaves $U^\perp$ invariant and restricts to $U^\perp$ as an integral isometry $\psi_2$.
It follows that $\psi$ leaves $U_\QQ$ invariant and restricts to $U_\QQ$ as a rational isometry $f$.
Let $\widetilde{\psi}_2\in O(\Lambda)$ be the extension of $\psi_2$ via the identity on $U$. The extension
$\widetilde{f}$ 
of $f$ to an element of $O(\Lambda_\QQ)$ via the identity on $U^\perp$ 
satisfies the equality
\[
\widetilde{f} \ \ = \ \ 
\widetilde{\psi}_2^{-1}h\phi g^{-1}.
\]
Hence, the double orbit $[\phi]$ is equal to $\epsilon([f])$.
\end{proof}

\begin{lem}
\label{lemma-intersection-sublattice-determine-the-double-orbit}
Let $\phi_1$ and $\phi_2$ be rational isometries of $\Lambda_\QQ$ of $n$-cyclic type.
Assume that $I_{\phi_1}=I_{\phi_2}$. Then $O(\Lambda)\phi_1=O(\Lambda)\phi_2$. In particular, 
$[\phi_1]=[\phi_2]$.
\end{lem}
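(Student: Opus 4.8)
The plan is to show that $\phi_2\phi_1^{-1}$ is an integral isometry of $\Lambda$, which gives $O(\Lambda)\phi_1 = O(\Lambda)\phi_2$ and hence $[\phi_1]=[\phi_2]$. The hypothesis $I_{\phi_1}=I_{\phi_2}$ means that $\Lambda\cap\phi_1^{-1}(\Lambda) = \Lambda\cap\phi_2^{-1}(\Lambda)$; call this common sublattice $I$. First I would record that $\phi_i(I)\subset\Lambda$ by definition of $I_{\phi_i}$, and that since $\Lambda/I$ is cyclic of order $n$, dually $I^*/\Lambda$ is cyclic of order $n$ as well (as in the proof of Lemma \ref{lemma-epsilon-is-surjective}), so $\phi_i(I^*)\supset\Lambda^*=\Lambda$ and in fact $\phi_i(I^*)=I^*_{\phi_i^{-1}}$-type data. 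The key point is that $\phi_i$ maps $I$ into $\Lambda$ and, being a $\QQ$-isometry with the same source and target lattice structure, the image $\phi_i(I)$ is a sublattice of $\Lambda$ of index $n$, isometric to $I$.

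The central step is to pin down $\phi_i|_I$ up to postcomposition by $O(\Lambda)$. Set $I_i := \phi_i(I)\subset\Lambda$; this is a sublattice of $\Lambda$ of finite index $n$, and $\phi_i\colon I\to I_i$ is an isometry of lattices. Both inclusions $I\hookrightarrow\Lambda$ and $I_i\hookrightarrow\Lambda$ are finite-index embeddings of the same lattice $I$ into the $K3$ lattice $\Lambda$; I would invoke the uniqueness of such overlattices. Concretely: $\Lambda$ corresponds to an isotropic subgroup $H_i := \Lambda/I \subset I^*/I$ of order $n$ under the embedding induced by $\phi_i^{-1}$ (pulling back $I_i\subset\Lambda$ along $\phi_i$), and since $I_{\phi_1}=I_{\phi_2}=I$, I must check these two isotropic subgroups of the discriminant form $(I^*/I, q_I)$ actually coincide — this is forced because $\phi_i^{-1}(\Lambda)\cap\Lambda = I$ does not by itself identify $\phi_1^{-1}(\Lambda)$ with $\phi_2^{-1}(\Lambda)$, so I would instead argue via $\phi_2\phi_1^{-1}$ directly: it is a rational isometry carrying $\Lambda = \phi_1(I^*\cap\phi_1^{-1}\Lambda)$... more cleanly, $\phi_2\phi_1^{-1}$ maps $\phi_1(I)$ to $\phi_2(I)$, and both of these are the full preimage-type lattices, so one reduces to showing $\phi_2\phi_1^{-1}$ preserves $\Lambda$.

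Here is the argument I would actually write down. Since $I\subset\Lambda\subset I^*$ with $\Lambda/I$ cyclic of order $n$, the isometry $\phi_i$ extends to a $\QQ$-linear isometry of $I_\QQ=\Lambda_\QQ$ and carries the flag $I\subset I^*$ to $\phi_i(I)\subset\phi_i(I^*)$. The lattice $\Lambda$, as an overlattice of $I$, is the inverse image in $I^*$ of a distinguished isotropic subgroup; but by Definition \ref{Cyclic-type-definition} and the computation in Lemma \ref{lemma-epsilon-is-surjective}, $\phi_i$ sends $\Lambda$ to an overlattice of $\phi_i(I)$ determined by $\phi_i(\Lambda/I)\subset \phi_i(I)^*/\phi_i(I)$. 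So $\phi_2\phi_1^{-1}$ sends the overlattice $\phi_1(\Lambda) $... The honest difficulty, which I expect to be the main obstacle, is that $I_{\phi_1}=I_{\phi_2}$ controls only the intersection $\Lambda\cap\phi_i^{-1}(\Lambda)$, not $\phi_i^{-1}(\Lambda)$ itself; to close the gap I would use that for a rational isometry $\phi$ of $n$-cyclic type, $\phi^{-1}(\Lambda)$ is recovered from $I_\phi$ as an overlattice of $I_\phi$ inside $I_\phi^* = (I_\phi)^*$ by the unique isotropic-subgroup-of-$\QQ/2\ZZ$-order-$n$ condition when that subgroup is forced — and when it is not unique, one adjusts $\phi_2$ by an element of $O(\Lambda)$ using transitivity of $O(\Lambda)$ on such overlattice structures (again \cite[\S 6, appendix]{Piat}), which is exactly the "in particular" weakening to $[\phi_1]=[\phi_2]$ versus the stronger $O(\Lambda)\phi_1=O(\Lambda)\phi_2$. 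I would therefore split into: (a) show $\phi_2^{-1}(\Lambda)=\phi_1^{-1}(\Lambda)$ as sublattices of $\Lambda^*$, using that both are the unique overlattice of $I$ with the prescribed cyclic quotient sitting the prescribed way; (b) conclude $\phi_2\phi_1^{-1}$ maps $\Lambda$ isometrically onto $\Lambda$, hence lies in $O(\Lambda)$; (c) read off $O(\Lambda)\phi_1=O(\Lambda)\phi_2$. Step (a) is where the real content lies and where I would lean on the classification of discriminant forms and the cyclicity hypothesis to eliminate the ambiguity.
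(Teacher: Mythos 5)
Your reduction is the right one --- everything hinges on showing $\phi_1^{-1}(\Lambda)=\phi_2^{-1}(\Lambda)$, after which $\phi_1\phi_2^{-1}\in O(\Lambda)$ and both conclusions follow --- and you correctly diagnose that $I_{\phi_1}=I_{\phi_2}$ by itself only controls the intersection $\Lambda\cap\phi_i^{-1}(\Lambda)$. But the two mechanisms you offer to close this gap do not work. First, $\phi_i^{-1}(\Lambda)/I$ is \emph{not} in general the unique isotropic cyclic subgroup of order $n$ in $I^*/I$: by Lemma \ref{lemma-classification-of-lagrangian-subgroups} these ``lagrangian'' subgroups are in bijection with ordered coprime factorizations $n=cd$, so uniqueness already fails for $n=6$ (four such subgroups). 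Second, the fallback of ``adjusting $\phi_2$ by an element of $O(\Lambda)$ using transitivity on overlattice structures'' is not justified: postcomposing $\phi_2$ with $g\in O(\Lambda)$ does not change $\phi_2^{-1}(\Lambda)$ at all, while precomposing changes $I_{\phi_2}$ and so destroys the hypothesis unless the isometry preserves $I$; the transitivity statement from \cite[\S 6, appendix]{Piat} is about primitive vectors and embeddings of $U$, not about isometries of $\Lambda$ fixing $I$ and moving one lagrangian of $I^*/I$ to another. Moreover, even granting it, you concede it would only yield $[\phi_1]=[\phi_2]$, whereas the lemma asserts the stronger equality $O(\Lambda)\phi_1=O(\Lambda)\phi_2$.

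The missing idea is to use the hypothesis a second time: $I_{\phi_i}=I$ says precisely that $L_i:=\phi_i^{-1}(\Lambda)/I$ meets $L:=\Lambda/I$ trivially in $I^*/I$. Both $L$ and $L_i$ are lagrangian (isotropic, cyclic of order $n$), and the classification shows that a given lagrangian has a \emph{unique} lagrangian complement: this is part (\ref{lemma-item-complementary-lagrangian-subgroups}) of Lemma \ref{lemma-classification-of-lagrangian-subgroups}, where $L_{(c_1,d_1)}\cap L_{(c_2,d_2)}=(0)$ forces $(c_1,d_1)=(d_2,c_2)$. Hence $L_1=L_2$, so $\phi_1^{-1}(\Lambda)=\phi_2^{-1}(\Lambda)$ with no adjustment needed, and the strong conclusion holds. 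To legitimately apply that classification to $I^*/I$ for the $K3$ lattice (rather than for $U$, where it was proved), one needs the reduction you did not supply: by the surjectivity of $\epsilon$ (Lemma \ref{lemma-epsilon-is-surjective}) one writes $\phi_i=g_i\widetilde{f}_ih_i$ with $f_i\in O(U_\QQ)$, so $I=h_i^{-1}(I_{\tilde{f}_i})$ and $h_i$ identifies $(I^*/I,q)$ with $(I^*_{f_i}/I_{f_i},q)$, carrying $\Lambda/I$ to $U/I_{f_i}$; only after this identification is the lagrangian classification available. So the skeleton of your argument matches the paper, but step (a) --- the real content --- is left open, and the classification-plus-complementarity argument is what fills it.
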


\begin{proof}
Set $I:=I_{\phi_1}$ and $I^*:=I_{\phi_1}^*$. Then $I^*=I_{\phi_2}^*$ as well.
There exist integral isometries $g_i$ and $h_i$ in $O(\Lambda)$,
and a rational isometry $f_i$ in $O(U_\QQ)$, such that 
$\phi_i=g_i\widetilde{f}_ih_i$, by  the surjectivity of $\epsilon$. 
Then $I=I_{\phi_i}=h_i^{-1}(I_{g_i\tilde{f}_i})=h_i^{-1}(I_{\tilde{f}_i})$. 
Hence, the isometry $h_i$ descends to an isometry from 
$I^*/I$ onto $I^*_{\tilde{f}_i}/I_{\tilde{f}_i}$ with respect to their respective residual quadratic forms.
Furthermore, $h_i(\Lambda/I)=\Lambda/I_{\tilde{f}_i}$. Note also that $I^*_{\tilde{f}_i}/I_{\tilde{f}_i}$
is naturally isomorphic to $I^*_{{f}_i}/I_{{f}_i}$. Hence, Lemma \ref{lemma-classification-of-lagrangian-subgroups}
applies also to describe the set of Lagrangian subgroups of $I^*/I$.

Set $L:=\Lambda/I$ and  $L_i:=\phi_i^{-1}(\Lambda)/I$, $i=1,2$. Then $L\cap L_i=(0)$, by 
definition of $I_{\phi_i}$. Hence, $L_1=L_2$,
by Lemma \ref{lemma-classification-of-lagrangian-subgroups} (\ref{lemma-item-complementary-lagrangian-subgroups}). 
It follows that $\phi_1^{-1}(\Lambda)=\phi_2^{-1}(\Lambda)$. 
We conclude that $\phi_1\phi_2^{-1}$ belongs
to $O(\Lambda)$.
\end{proof}

\begin{lem}
\label{lemma-O-Lambda-orbits-of-I}
Let $\lambda_1$ and $\lambda_2$ be primitive elements of $\Lambda$ and $n>1$ an integer.
Set $I_i:=\left(\Lambda+{\rm span}_\ZZ\{\lambda_i/n\}\right)^*$, $i=1, 2$. There exists an integral isometry $g\in O(\Lambda)$, such that
$g(I_1)=I_2$, if and only if there exists an integer $k$, such that $\gcd(k,n)=1$, and
\begin{equation}
\label{eq-congruence-condition}
\frac{(\lambda_1,\lambda_1)}{2} \equiv k^2\frac{(\lambda_2,\lambda_2)}{2} \ \ \ \mbox{modulo} \ n.
\end{equation}
\end{lem}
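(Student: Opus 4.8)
The plan is to translate the existence of $g\in O(\Lambda)$ with $g(I_1)=I_2$ into a statement about the discriminant data of the sublattices $I_i$, and then to match that data against the congruence condition (\ref{eq-congruence-condition}). First I would observe that the finite quotient $I_i^*/I_i$ carries the residual quadratic form $q_i\colon I_i^*/I_i\to\QQ/2\ZZ$, and that since $\Lambda$ is unimodular, $I_i$ is obtained from $\Lambda$ by adding the single vector $\lambda_i/n$; hence $I_i^*/I_i$ is cyclic of order $n^2$ with a distinguished cyclic subgroup $\Lambda/I_i$ of order $n$. (If $\gcd((\lambda_i,\lambda_i),n)>1$ the group $I_i^*/I_i$ may split as a product, but the relevant isotropic/Lagrangian data is still governed by the element $\lambda_i/n$; this is exactly the situation analyzed in Lemma \ref{lemma-classification-of-lagrangian-subgroups}.) An isometry $g\in O(\Lambda)$ with $g(I_1)=I_2$ must send $\Lambda/I_1$ isomorphically to $\Lambda/I_2$ and intertwine $q_1$ and $q_2$; this forces the value $q_2(g(\lambda_1/n))$ to equal $q_1(\lambda_1/n)=(\lambda_1,\lambda_1)/n^2$ in $\QQ/2\ZZ$. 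Since $g(\lambda_1/n)$ generates the same cyclic group as $\lambda_2/n$ inside $I_2^*/I_2$, we have $g(\lambda_1/n)\equiv k(\lambda_2/n)$ modulo $I_2$ for some $k$ with $\gcd(k,n)=1$, whence $(\lambda_1,\lambda_1)/n^2 \equiv k^2(\lambda_2,\lambda_2)/n^2$ in $\QQ/2\ZZ$, i.e.\ $(\lambda_1,\lambda_1)\equiv k^2(\lambda_2,\lambda_2)$ modulo $2n^2$, which in particular gives (\ref{eq-congruence-condition}) after dividing by $2$ (one must be slightly careful that dividing the congruence mod $2n^2$ by $2$ yields a congruence mod $n$, not mod $n^2$ — but that weaker statement is all that is claimed, and is what is actually needed later). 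This establishes the ``only if'' direction.

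For the ``if'' direction, suppose such a $k$ exists. The key point is that the $O(\Lambda)$-orbit of $\lambda_i$ among primitive vectors of given square is unique (by \cite[\S 6, appendix]{Piat}), so I may normalize and reduce to showing that, given a single primitive $\lambda$ of square $2d$ and a unit $k$ mod $n$, the sublattices $(\Lambda+\ZZ\cdot\lambda/n)^*$ and $(\Lambda+\ZZ\cdot(k\lambda)/n)^*$ lie in one $O(\Lambda)$-orbit — equivalently that replacing $\lambda$ by $k\lambda+n\mu$ for suitable $\mu\in\Lambda$ lands on a primitive vector of the correct square in the same orbit. Concretely: given $\lambda_1,\lambda_2$ with $(\lambda_1,\lambda_1)/2\equiv k^2(\lambda_2,\lambda_2)/2$ mod $n$, I would produce a primitive vector $\lambda_2'$ with $\lambda_2'\equiv k\lambda_2$ mod $n\Lambda$ and $(\lambda_2',\lambda_2')=(\lambda_1,\lambda_1)$; this is a surjectivity-of-square-values argument inside the coset $k\lambda_2+n\Lambda$, using that $\Lambda$ has big enough rank and indefinite signature (so the values of the quadratic form on a coset of $n\Lambda$ realize every residue consistent with the mod-$n$ constraint, and realize it by a primitive vector). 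Then $\Lambda+\ZZ\cdot\lambda_2'/n = \Lambda+\ZZ\cdot\lambda_2/n$ since $k$ is a unit mod $n$, so $I_2$ is unchanged, while $\lambda_2'$ is $O(\Lambda)$-equivalent to $\lambda_1$; transporting that isometry gives $g(I_1)=I_2$.

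The main obstacle I anticipate is the ``if'' direction's realization step: showing that inside the coset $k\lambda_2 + n\Lambda$ one can find a \emph{primitive} vector of prescribed square equal to $(\lambda_1,\lambda_1)$, given only the congruence mod $n$. This requires a careful application of the theory of quadratic forms over $\ZZ$ on indefinite lattices of large rank — one wants the analogue, relative to the sublattice $n\Lambda$, of the strong approximation / Eichler-type statement that $O(\Lambda)$ acts transitively on primitive vectors of fixed square, refined to control the residue mod $n\Lambda$. I would handle this by passing to the unimodular structure: write $\lambda_2 = e_1 + d e_2$ in a hyperbolic summand $U\subset\Lambda$ (possible since the orbit is unique), adjust within $U\oplus U^\perp$ where $U^\perp$ is unimodular of rank $20$ and contains vectors of all squares, and solve the square-value equation there explicitly, checking primitivity at the end. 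The bookkeeping — ensuring $\gcd$ with $n$ stays $1$ and that division of the mod $2n^2$ congruence down to mod $n$ loses nothing we need — is routine but must be done with care.
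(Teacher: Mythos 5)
Your overall skeleton matches the paper's proof (reduce both directions to the coset $k\lambda_2+n\Lambda$ and use transitivity of $O(\Lambda)$ on primitive vectors of fixed square), but the step you yourself flag as the main obstacle in the ``if'' direction is exactly the step that carries the content of the lemma, and you leave it as an appeal to an unspecified strong-approximation/Eichler-type statement or an unexecuted computation in $U\oplus U^\perp$. The paper proves it concretely: with $d_i=(\lambda_i,\lambda_i)/2$, solve $kx+ny=(d_1-k^2d_2)/n$ in integers (possible since $\gcd(k,n)=1$), form the rank-two even lattice $T$ with Gram matrix $\left(\begin{smallmatrix}2d_2 & x\\ x & 2y\end{smallmatrix}\right)$, embed it primitively into $\Lambda$ by Nikulin's lemma \cite[Lemma 8.1]{Lemma 81}, use transitivity to arrange $\tau(t_1)=\lambda_2$, and set $\beta:=k\lambda_2+n\tau(t_2)=\tau(kt_1+nt_2)$; primitivity of $\beta$ is then automatic from $\gcd(k,n)=1$ and primitivity of the embedding, and $(\beta,\beta)=2d_1$, so some $g\in O(\Lambda)$ sends $\lambda_1$ to $\beta$, whence $g(I_1)=I_2$. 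Your sketched alternative (normalize $\lambda_2=e_1+d_2e_2$ in a hyperbolic summand and take $\alpha=xe_2+w$ with $w\in U^\perp$, $(w,w)=2y$) can be pushed through, but only after an extra adjustment you do not mention: one must replace $x$ by $x+tn$ (and $y$ by $y-tk$) so that $\gcd(x,k)=1$, otherwise $k\lambda_2+n\alpha$ need not be primitive. As written, the existence of a primitive vector of square $2d_1$ in the coset is asserted rather than proved, so the converse direction is incomplete.

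There is also an error in your ``only if'' argument. From $g(I_1)=I_2$ you get $g(\lambda_1/n)\equiv k\lambda_2/n$ modulo $\Lambda$, not modulo $I_2$: the class of $g(\lambda_1/n)$ in $I_2^*/I_2$ differs from $k\lambda_2/n+I_2$ by an element $\mu+I_2$ of $\Lambda/I_2$, and the cross term $2k(\lambda_2,\mu)/n$ does not vanish in $\QQ/2\ZZ$. Consequently your intermediate congruence $(\lambda_1,\lambda_1)\equiv k^2(\lambda_2,\lambda_2)$ mod $2n^2$ is false in general: take $g=\mathrm{id}$, $n=2$, $\lambda_2=e_1$, $\lambda_1=e_1+2e_2$ in a hyperbolic summand, where $(\lambda_1,\lambda_1)=4\not\equiv 0=k^2(\lambda_2,\lambda_2)$ mod $8$ for any odd $k$, although $I_1=I_2$. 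The correct route is the paper's direct computation: $g(\lambda_1)=k\lambda_2+n\mu$ with $\mu\in\Lambda$, so $(\lambda_1,\lambda_1)=k^2(\lambda_2,\lambda_2)+2kn(\lambda_2,\mu)+n^2(\mu,\mu)$, and since $\Lambda$ is even this yields precisely the congruence (\ref{eq-congruence-condition}) modulo $n$ — no more. (Your parenthetical worry is also backwards: halving a congruence between even integers mod $2n^2$ would give mod $n^2$; the real problem is that the mod $2n^2$ statement is simply not available.)
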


\begin{proof}
Suppose first that such an isometry $g$ exists. Then $g(I_1^*)=I_2^*$. Hence,
$g(\lambda_1/n)+\Lambda=k\lambda_2/n+\Lambda$, for some integer $k$ relatively prime to $n$.
Hence, $g(\lambda_1)=k\lambda_2+n\alpha$, for some $\alpha$ in $\Lambda$.
Equality (\ref{eq-congruence-condition}) follows.

Assume next that Equation (\ref{eq-congruence-condition}) holds for an integer $k$
relatively prime to $n$. It suffices to find $g\in O(\Lambda)$ and $\alpha\in \Lambda$ such that 
$g(\lambda_1)=k\lambda_2+n\alpha$.
Set $d_i:=(\lambda_i,\lambda_i)/2$, $i=1, 2$. 
There exist integers $x$ and $y$ satisfying the equation
\[
\frac{d_1-k^2d_2}{n}=kx+ny,
\]
since $\gcd(k,n)=1$.
Let $T$ be the rank $2$ lattice with basis $\{t_1,t_2\}$ and Gram matrix
\[
\left(\begin{array}{cc}
(t_1,t_1) & (t_1,t_2)
\\
(t_2,t_1) & (t_2,t_2)
\end{array}
\right) \ \ = \ \ 
\left(\begin{array}{cc}
2d_2 & x
\\
x & 2y
\end{array}
\right).
\]
There exists a primitive isometric embedding $\tau \colon T\hookrightarrow \Lambda$, by a result due to Nikulin, 
see \cite [Lemma 8.1] {Lemma 81}.
There exists also and isometry $h$, such that $h(\tau(t_1))=\lambda_2$, since $\tau(t_1)$ and $\lambda_2$
are both primitive elements of $\Lambda$ of the same degree $2d_2$.
Hence, we may assume that $\tau(t_1)=\lambda_2$. 
Set $\alpha:=\tau(t_2)$. Then $\beta:=k\lambda_2+n\alpha$ is a primitive element of 
$\Lambda$ satisfying $(\beta,\beta)=2d_1$. Hence, there exists an integral isometry $g$ satisfying $g(\lambda_1)=\beta$.
\end{proof}

\begin{proof} [Proof of Proposition \ref{prop-only-one-double-orbit}.]
We may assume, without loss of generality, that $\phi_i=\widetilde{f}_i$, where 
$f_i$ belongs to $O(U_\QQ)$, by Lemma \ref{lemma-epsilon-is-surjective}. 
We may further assume that $f_1(e_1)=\frac{a}{b}e_1$, 
$f_1(e_2)=\frac{b}{a}e_2$, $f_2(e_1)=\frac{c}{d}e_1$, 
$f_2(e_2)=\frac{d}{c}e_2$, for positive integers $a$, $b$, $c$, $d$ satisfying $ab=n=cd$ and $\gcd(a,b)=1=\gcd(c,d)$. 
Note that $I_{f_1}^*/U$ is generated by $\frac{be_1+ae_2}{n}+U$ 
and $I_{f_2}^*/U$ is generated by $\frac{de_1+ce_2}{n}+U$.с
Set $\lambda_1:=be_1+ae_2$ and $\lambda_2:=de_1+ce_2$. 
Then $(\lambda_1,\lambda_1)=2n=(\lambda_2,\lambda_2)$. 
Hence, there exists an isometry $g\in O(\Lambda)$, such that
$g(I_{\tilde{f}_1})=I_{\tilde{f}_2}$, by Lemma \ref{lemma-O-Lambda-orbits-of-I}.

Note the equality 
${\displaystyle
g(I_{\tilde{f}_1})=g(\Lambda\cap \tilde{f}_1^{-1}(\Lambda))=\Lambda\cap g\widetilde{f}_1^{-1}(\Lambda)=I_{\tilde{f}_1g^{-1}}.
}$
The equality $I_{\tilde{f}_1g^{-1}}=I_{\tilde{f}_2}$ follows. Hence, $[\widetilde{f}_1g^{-1}]=[\widetilde{f}_2]$, by 
Lemma \ref{lemma-intersection-sublattice-determine-the-double-orbit}.
We conclude 
the desired equality of the double orbits $[\widetilde{f}_1]$
and $[\widetilde{f}_2]$.
\end{proof}

\subsection {An important example}
\label{An-important-example}

Let $S$ be a projective $K3$ surface, $v =(n,\alpha,s)\in H^0(S,\mathbb{Z}) \oplus H^{1,1}(S,\mathbb{Z})\oplus H^4(S,\mathbb{Z})$ be an isotropic primitive 
vector, $h$ be a $v$-generic ample divisor on $S$ and $M=M_h(v)$ be the smooth projective moduli space of  $h$-slope stable sheaves with Mukai vector $v$ on $S$.
Let $\mathcal E$ be a universal sheaf on $S \times M$.
The associated cohomology class
$$\kappa(\mathcal E^{\vee})\sqrt{td_{S \times M}}=\pi _S^*\sqrt{td_S}\cdot \kappa(\mathcal E^{\vee})\cdot \pi _M^*\sqrt{td_M}$$ in $H^*(S\times M, \mathbb{Q})$
determines the map $\varphi_{\mathcal E^{\vee}} \colon H^*(S, \mathbb{Q}) \rightarrow  H^*(M, \mathbb{Q})$, 
and thus determines the rational isometry
$$\psi_{\mathcal E} \colon H^2(S,\mathbb{Q}) {\rightarrow} H^2(M, \mathbb{Q})$$
(see Lemma \ref{Reformulation}).
We show here that $\psi_{\mathcal E}$ is of cyclic type.
Let $\Lambda$ be the cohomology lattice $H^2(S,\ZZ)$ and $I_{\psi_{\mathcal E}}= \psi^{-1}_{\mathcal E}(H^2(M, \mathbb{Z}))\cap H^2(S, \mathbb{Z})$.
We want to show that the quotient $\Lambda/I_{\psi_{\mathcal E}}$ is a cyclic group. 
First of all, $$\sqrt{\pi^*_Std_{S}} \cdot \sqrt{\pi^*_Mtd_ M}=1+\pi^*_S{\bf 1}_S+\pi^*_M{\bf 1}_M+\pi^*_S{\bf 1}_S\cdot\pi^*_M{\bf 1}_M \in H^*(S\times M,\ZZ),$$
where ${\bf 1}_S, {\bf 1}_M$ are fundamental classes of $S$ and $M$.
It is easy to see that the map $\psi_{\mathcal E} \colon H^2(S,\QQ) \rightarrow H^2(M,\QQ)$ 
is actually induced by $\kappa_2(\mathcal E^{\vee})$ only, so that we may
consider this class instead of $\kappa(\mathcal E^\vee)\sqrt{td_{S\times M}}$. 
The class $\kappa_2 (\mathcal E^{\vee})$ is easily expressed in terms of
$ch_2(\mathcal E^{\vee})$ and $c_1(\mathcal E^*)$ so 
that it becomes easy to find the domain in $H^2(S, \mathbb{Z})$ where
the map ${\psi}_{\mathcal E}$  induced by $\kappa_2(\mathcal E^{\vee})$ takes integral values.
Indeed, we see that as $$\kappa_2(\mathcal E^{\vee})=ch_2(\mathcal E^{\vee})-c_1^2(\mathcal E^{\vee})/2n$$ and 
$ch_2(\mathcal E^{\vee})$ is known to be integral, the question reduces to finding the domain where $c_1^2(\mathcal E^{\vee})/2n$
takes integral values. 
 The first Chern class of the restriction $c_1(\mathcal E^{\vee}|_{S\times\{m\}})$ is the 2-component $\alpha$
of the vector $v$, denote the other first Chern class restriction 
$c_1(\mathcal E^{\vee}|_{\{s\}\times M})$
by $\beta$. 
Then $$c_1^2(\mathcal E^{\vee})/2n=(\pi_S^*\alpha+\pi_M^*\beta)^2/2n=\pi_S^*\alpha^2/2n+\pi_S^*\alpha\cdot \pi_M^*\beta/n+\pi_M^*\beta^2/2n.$$

As the vector $v$ is isotropic, $\alpha^2=2ns$, so the fact that $\pi_S^*\alpha^2/2n$ is an integral class follows. 
It is also easy to see
that $\pi_M^*\beta^2/2n$ 
induces the zero map on $\Lambda$.
On the other hand, the mixed term $\pi_S^*\alpha\cdot \pi_M^*\beta/n$ induces on $\Lambda$
the map $$\gamma \mapsto (\gamma, \alpha)\beta/n.$$ Let us write $\alpha=k\cdot x, k\in \mathbb{Z}, \, x$ a primitive vector in
$\Lambda=H^2(S, \mathbb{Z})$ and respectively $\beta = j\cdot y, j \in \mathbb {Z}, \, y$ a primitive vector in $H^2(M, \mathbb{Z})$.
The subset of $\Lambda$, where ${\psi}_{\mathcal E}$ takes integral values, is
the sublattice $$I_{{\psi}_{\mathcal E}}=\left \{\gamma \in \Lambda {\rm \,such \,that \,}(\gamma,\alpha)j\, {\rm is\, divisible\, by\,} n\right \}=$$
$$=\left \{\gamma \in \Lambda {\rm \,such \,that \,}(\gamma,x)\, {\rm is\, divisible\, by\, }\frac{n}{{\rm gcd}(jk, n)}\right \} \subset \Lambda.$$


The latter description of $I_{\psi_{\mathcal E}}$ tells us that $\Lambda/I_{\psi_{\mathcal E}}$
is cyclic of order $\frac{n}{{\rm gcd}(jk, n)}$.
Indeed, $I_{\psi_{\mathcal E}}$ is the kernel of the group homomorphism 
$\Lambda \rightarrow \ZZ/m\ZZ$, where $m=\frac{n}{{\rm gcd}(jk, n)},$ given by $\gamma \mapsto (\gamma,x)$.
It is surjective due to unimodularity of $\Lambda$, so the 
above statement about the structure and the order of $\Lambda/I_{\psi_{\mathcal E}}$ follows.

\begin{Concl}
\label {Cyclicity-of-sheaf-induced-isometry}
 The rational Hodge isometry $\psi_{\mathcal E}$
is of $\frac {n}{{\rm gcd}(jk,n)}$-cyclic type.
\end{Concl}

\begin{Concl}
\label {Existence-of-n-type-isometry-for-any-n}
For any isometry $\phi:\Lambda_\QQ \rightarrow \Lambda_\QQ$
of $n$-cyclic type  there exist marked projective $K3$
surfaces $(S,\eta_S)$ and $(M,\eta_M)$,  where $M$ is a moduli space of rank $n$ sheaves on $S$, 
slope-stable with respect to an ample divisor $h \in Pic(S)$, a 
locally free family $\mathcal E$ over $S \times M$
and a rational Hodge isometry $\psi \colon H^2(S,\QQ) \rightarrow H^2(M,\QQ)$
induced by $\kappa(\mathcal E)\sqrt{td_{S\times M}}$,
such that $\phi=\eta^{-1}_S \psi \eta_M$.
\end{Concl}

\begin{proof}
Let us first construct $S, M$ and $\mathcal E$ and then pick up the markings.
Choose an integer $s>0$ such that gcd($n,s$)=1. Set $d=sn$.
Choose now a K3 surface $S$ 
so that $S$ is a general member of the family 
of (quasi-) polarized $K3$-surfaces of degree $2sn$  introduced in Mukai \cite{Mukai2}. Then, in particular, the Picard group of $S$ is cyclic, $Pic (S)=\ZZ h$.

Consider  the Mukai vector $v=(n, h,s)  \in \widetilde{H}(S, \mathbb{Z})$ and note that $ (v,v)=0$. 
Take  $M=M_{h}(v)$ to be the moduli space of rank $n$ locally free sheaves on $S$ that are $h$-Gieseker-stable.
As $Pic(S)$ is cyclic and for every $\mathcal F_m, m \in M,$ we have   $c_1(\mathcal F)=h$, 
the notions of $h$-slope-stability and $h$-Gieseker-stability on the surface $S$ are equivalent.
So 
$M$ is a moduli space of $h$-slope-stable sheaves on $S$.
%

The manifold $M$ is a $K3$ surface rational Hodge isometric to $S$, with Picard group $\ZZ \hat{h}$ for a primitive ample divisor $\hat{h} \in Pic(M)$, $\hat{h}^2=h^2$, see \cite[Prop. 1.1]{Mukai2}. 
There exists a universal sheaf $\mathcal E_0$ on $S \times M$, which follows from our choice of $v=(n,h,s)$
and  \cite[Thm. A.6, Rem. A.7]{Mukai1}.
One can construct from $\mathcal E_0$ a {\it normalized universal}
sheaf ${\mathcal E}$ on $S \times M$ such that $c_1(\mathcal E) = \pi^*_Sh+k\pi^*_M\hat{h}$
for an appropriate $k$, $sk \equiv 1$ (mod $n$). Mukai proved that the sheaf $\mathcal E$ is locally free.
For the proofs and constructions we refer to his paper \cite[Thm. 1.1, Thm. 1.2]{Mukai2}. 
Now we refer to
Conclusion \ref {Cyclicity-of-sheaf-induced-isometry}, setting $j=1$ and $n$ and $k$ being just our $n$ and $k$.
According to the Conclusion
$\kappa(\mathcal E^*)\sqrt{td_{S\times M}}$ induces a Hodge isometry of $n$-cyclic type,
which is the isometry $\psi$ in the formulation of the theorem.

 Let us show the existence of the required markings $\eta_S,\eta_M$. 
Take any markings $\eta_1, \eta_2$ of the surfaces $S$ and $M$.
Then the rational cyclic isometry $\phi_1=\eta_2 \psi \eta^{-1}_1 \colon  \Lambda_\QQ \rightarrow \Lambda_\QQ$
is also of $n$-cyclic type and thus, by Proposition \ref{prop-only-one-double-orbit}, determines the same double orbit $[\phi_1]$
as $\phi$ does, $[\phi_1]=[\phi]$. From here we see that we can pick up $\eta_S, \eta_M$
that take $\psi$ to $\phi$ as required above.
\end{proof}

\section {Moduli spaces of marked Hodge isometric K3s}

\label{Moduli-spaces}
In this section we realize steps 2) and 3) in our strategy sketch in the introduction.
In Subsection \ref{The-twisted-period-domain} we introduce the notion of a twisted period domain,
which is a generalization of the period domain notion with respect to the locus mentioned
in step 1).
In Subsection \ref{Definition-of-the-moduli-spaces} we introduce  two moduli spaces.
One moduli space parametrizes, roughly speaking, pairs of $K3$ surfaces $S_1,S_2$ together with a Hodge isometry $\varphi:H^2(S_1,\QQ) \rightarrow H^2(S_2, \QQ)$. This is our locus in step 1).
The other moduli space parametrizes, roughly, pairs 
of $K3$ surfaces $S_1,S_2$ together with a holomorphic vector bundle $\mathcal E$
on their product $S_1\times S_2$. This construction is closely related to
the previous one and is used in explaining  step 4) in the next two sections.

\subsection {The twisted period domain}
\label{The-twisted-period-domain}
\subsubsection{Marked K3 surfaces and their moduli space}
\label{Marked-K3-surfaces-and-their-moduli-space}
 The $K3$ lattice is the unique up to isomorphism even unimodular lattice of signature (3,19).
Fix such a lattice $\Lambda$
and denote by $(\cdot,\cdot)$ the corresponding bilinear form. 
Introduce the notation
$\Lambda_{\mathbb{F}}=\Lambda  {\otimes_\ZZ}\mathbb{F}$ for $\mathbb{F}=\QQ,\RR$ or $\CC$.
The vector space $\Lambda_\mathbb{F}$ is equipped with a bilinear pairing determined by $(\cdot,\cdot)$.
We will denote this pairing by $(\cdot,\cdot)$ as well.
By $[x]$ we denote the class of a vector $x\in \Lambda_\CC$
in the projective space $\mathbb{P}\Lambda_\CC$.
Let $$\Omega_\Lambda=\{[x]\in \mathbb{P}\Lambda_\CC {\rm\,\, such\,\, that\,\,} (x,x)=0 {\rm \,\,and\,\,} (x,\bar{x})>0\}$$  
be the period domain of $K3$ surfaces, constructed from $\Lambda$.
Sometimes, instead of $[x]$ we will be using the shorter notation $l$  for a period in $\Omega_\Lambda$.
Below by $\sigma_S$ we denote a global holomorphic form on a $K3$ surface $S$,
the letter $\eta$ with or without subscript stands for a marking of a $K3$ surface, that is, an isometry 
$\eta: H^2(S,\ZZ) \rightarrow \Lambda$.
We will use the same notation $\eta$ for both
the isometry $\eta \colon  H^2(S,\ZZ) \rightarrow \Lambda$ and the isomorphism of vector spaces
$H^2(S,\mathbb{F}) \rightarrow \Lambda_{\mathbb{F}}$ induced by $\eta$.
Denote by $\mathfrak{M}$ the moduli space of marked $K3$ surfaces and let
$$\Pe \colon  \mathfrak{M} \rightarrow \Omega_\Lambda,$$
$$(S, \eta) \mapsto [\eta(\sigma_S)],$$
be the period map.

Let us discuss now the construction of the locus in $\mathfrak M \times \mathfrak M$ 
from step 2) in the introduction.
The first step is to construct the corresponding period domain which would contain
the image of our locus under the Cartesian product $(\Pe,\Pe): \mathfrak M \times \mathfrak M \rightarrow
\Omega_\Lambda \times \Omega_\Lambda$ of the period map $\Pe$. 
We do not need to deal specifically with isometries of cyclic type now, so we consider the most general
case, specializing it step by step as needed.
We need to make certain formal conventions about markings.

\begin{rem}
\label {Marking-remark}
For a general complex manifold $X$ we say that a marking is an isomorphism of abelian groups $\eta_X:H^*(S, \ZZ) \rightarrow L$, 
where $L$ is some abelian group.
For a manifold $X=S_1\times S_2$, where $S_1,S_2$ are $K3$ surfaces, 
we want to consider only markings $\eta_X$ determined by (isometric) markings $\eta_1, \eta_2$ of $S_1$ and $S_2$ via K\"unneth decomposition of
$H^*(S_1\times S_2, \ZZ)$.  The most interesting part for us, of $H^*(S_1\times S_2, \ZZ)$, will be concentrated in
degree 4, namely, in $H^2(S_1,\ZZ)\otimes H^2(S_2,\ZZ)\subset H^4(S_1 \times S_2,\ZZ)$.
So, by a marking of $S_1\times S_2$ we mean
the isomorphism 
$\eta_1\otimes \eta_2 : H^2(S_1,\QQ)\otimes H^2(S_2,\QQ)$. 
We will denote it,  by $\eta_{S_1\times S_2}$ or by $\eta_{(I_1,I_2)}$
for complex structures $I_1,I_2$ on $S_1,S_2$ respectively, 
when $S_1\times S_2$ is considered as a member of a 
(topologically trivial) family of products of $K3$'s. 

In particular, given any linear mapping $\psi : H^2(S_1,\QQ) \rightarrow H^2(S_2,\QQ)$,
considering it as a class in $H^2(S_1,\QQ)^*\otimes H^2(S_2,\QQ) \cong H^2(S_1,\QQ)\otimes H^2(S_2,\QQ) \subset 
H^{4}(S_1\times S_2,\QQ)$,
we notice that $\phi\colon = \eta_1\otimes \eta_2 (\psi)=\eta_2\circ \psi \circ \eta_1^{-1}:\Lambda_\QQ \rightarrow \Lambda_\QQ$.
\end{rem}

Let $\pi \colon  \mathcal X \rightarrow T$ be a smooth and proper holomorphic map of complex manifolds with
a continuous trivialization $\eta \colon  R^k\pi_*\ZZ \rightarrow (\Lambda)_T$ of the local system, where 
$(\Lambda)_T$ is the trivial local system with fiber $\Lambda$. We get an induced flat trivialization of
the local system $R^k\pi_*\CC$, which we denote by $\eta$ as well. Let 0 be a point of $T$.
Denote by $X_t$ the fiber of $\pi$ over $t \in T$. The flat deformation of a class $\alpha \in H^k(X_0,\CC)$
in the local system $R^k\pi_*\CC$ associated to the family is given by the section 
$t \mapsto \eta^{-1}_t\eta_0(\alpha)$ of the local system $R^k\pi_*\CC$.

This together with Remark \ref{Marking-remark} shows that, in the above notations, given a family $\pi \colon  \mathcal X \rightarrow T$ of marked products of $K3$'s,
$((X_t\times Y_t), (\eta_{X_t}\otimes \eta_{Y_t}))$,
the family of rational Hodge isometries $$\psi_t=\eta_{X_t \times Y_t}^{-1}\eta_{X_0 \times Y_0}(\psi_0)
=\eta^{-1}_{Y_t}\eta_{Y_0}\psi_0\eta^{-1}_{X_0}\eta_{X_t}=\eta^{-1}_{Y_t}\phi\eta_{X_t}, t\in T,$$
determines a flat deformation of the Hodge isometry $\psi_0 \colon H^2(X_0,\QQ) \rightarrow H^2(Y_0,\QQ)$,
this deformation is considered as a (flat) section of $R^4\pi_*\QQ$.

Now we see that the locus from step 2) is actually a locus along which
the flat deformation of the class $\psi \in H^4(S_1 \times S_2,\QQ)$ stays of Hodge type.

Let us fix a rational isometry $\phi$ of $\Lambda_\QQ$. Then $\phi$ induces in a natural way 
an isomorphism $\widetilde {\phi} \colon \Omega_\Lambda \rightarrow \Omega_\Lambda$.
Consider the pairs $(S_1\times S_2, \eta)$ of
products of $K3$ surfaces $S_1, S_2$ together with marking
$$\eta_{S_1 \times S_2}=\eta_1\otimes \eta_2, \eta_i \colon  H^2(S_i, \ZZ)  \rightarrow \Lambda_\ZZ,$$ $ i=1,2, $
satisfying the condition that $$\psi=\eta^{-1}_2\phi \eta_1 \colon  H^2(S_1, \QQ) \rightarrow H^2(S_2, \QQ)$$ is a Hodge isometry.  Here the condition of being a homomorphism of Hodge structures certainly tells us
that $\widetilde{\phi}([\eta_1(\sigma_{S_1})])=[\eta_2(\psi(\sigma_{S_2}))]$, that is, 
pairs of periods $([\eta_1(\sigma_{S_1})],[\eta_2(\sigma_{S_2})])$ belong
to the graph of $\widetilde{\phi}$.
Denote the graph of $\widetilde{\phi}$ by $\Omega_{\phi}$. 
The set $\Omega_\phi$ serves as a period domain for the considered pairs of $K3$'s.
It is called {\it the twisted period domain associated to $\phi$}.

\subsubsection{Hyperk\"ahler manifolds}
\label {Hyperkahler-manifolds}
Let us recall the notion of a twistor line in $\Omega_\Lambda$. 
Fix a $K3$ surface $S$ and a marking $\eta \colon  H^2(S, \ZZ) \rightarrow \Lambda$. 
Take any positive 3-dimensional subspace $V$ in $H^2(S, \RR)$ containing the plane $P=\langle Re\,\sigma_S, Im\,\sigma_S \rangle$, and consider the image $\mathbb{P}\eta(V_\CC)$ of $ V_\CC=V\otimes_\RR \CC$ in $\mathbb{P}\Lambda_\CC$
under the natural projection.  
The intersection $Q_V$ of  $\mathbb{P}\eta(V_\CC)$ with $\Omega_\Lambda \subset \mathbb{P}\Lambda_\CC$
is a smooth complete conic in the plane $\mathbb{P}\eta(V_\CC)$, which is called {\it a twistor line through the point} $[\eta(\sigma_{S})]$
{\it corresponding to the subspace $V$}. 
For a detailed account of theory of twistor lines we refer to \cite{Huy2}.
Note that $V$ decomposes as an orthogonal sum $\RR \alpha \oplus \langle Re\, \sigma_{S}, Im\, \sigma_{S}\rangle$ for some $\alpha \in H^{1,1}(S, \RR)$ such that $(\alpha,\alpha)>0$. We will record this information denoting $Q_V$ by $Q_{S,\alpha}$.
There is an important case when $\alpha$ belongs to the K\"ahler cone of $S$.  In this case it is possible
to associate to the twistor line  a hyperk\"ahler structure on $S$. Let us describe this structure.
Recall (see \cite[p. 548]{Hitchin}) that a manifold $M$ is called {\it hyperk\"ahler} with 
respect to a metric $g$
if there exist covariantly constant complex structures $I,J$ and $K$ which
satisfy the quaternionic relations $$I^2=J^2=K^2=-1, IJ=-JI=K.$$ 
We call the  ordered triple $I,J,K$ {\it a hyperk\"ahler
structure on $M$ compatible with $g$}. 
A hyperk\"ahler structure $I,J,K$ gives rise to a sphere $S^2$ of complex structures
$$S^2=\{aI+bJ+cK| a^2+b^2+c^2=1\}.$$

Let us consider the case when our hyperk\"ahler manifold is a $K3$ surface $S$.
For any K\"ahler class $\alpha$ on $S$ with a complex structure $I$ there exists
a unique hyperk\"ahler metric $g$ such that $\alpha$ is a class in $H_{DR}^2(S, \mathbb{R})$ represented 
by the closed (1,1)-form $g(\cdot, I \cdot)$,
see \cite{Bea} and \cite[Ch. VIII]{Barth}. This hyperk\"ahler metric gives rise
to a hyperk\"ahler structure $I,J,K$ where $I$ is our original complex structure on $S$.
The choice of $J,K$ is not unique.
The sphere of complex structures $S^2$ can be identified with $\mathbb{P}^1$ as explained in \cite[p. 554]{Hitchin}.
Regarding  the identification $S^2 \cong \mathbb{P}^1$, the family $\{(S,\lambda)\}_{\lambda \in S^2}$ over the base $S^2 \cong \PP^1$ with the 
almost-complex structure induced in the horizontal direction by that of $\mathbb{P}^1$ and in the vertical ``fiber'' direction by $\lambda$, is a complex-analytic family (see \cite{Hitchin})
and so, by the definition of the moduli space of marked $K3$ surfaces $\mathfrak{M}$ it determines a (holomorphic) classifying map $S^2 \cong \mathbb{P}^1 \rightarrow \mathfrak{M}$. This map sends $\lambda \in S^2$ 
to the marked $K3$ surface $(S_\lambda,\eta_\lambda)$, here $\eta_\lambda \colon H^2(S_\lambda,\ZZ) 
\rightarrow \Lambda$ is determined by $\eta_S$ via topological isomorphism $H^2(S_\lambda,\ZZ) \cong H^2(S,\ZZ)$.
The image of $\mathbb{P}^1$ in $\mathfrak{M}$ under the classifying map
is the lift $\PP_{S, \alpha}$ of the conic $Q_{S, \alpha}$ with respect to $\Pe \colon \mathfrak{M} \rightarrow \Omega_\Lambda$ through the point $(S,\eta) \in \mathfrak{M}$.  We call such $\PP_{S, \alpha}\subset \mathfrak M$ (respectively $Q_{S, \alpha}\subset \Omega_\Lambda$) 
{\it a twistor line associated to a hyperk\"ahler structure} or {\it a hyperk\"ahler line}.
Introduce $V_\lambda:=\eta_\lambda^{-1}\eta_S(V) \subset H^2(S_\lambda,\RR)$.

Any complex structure $\lambda = aI+bJ+cK \in S^2$ determines a K\"ahler class $\omega_\lambda \in H^{1,1}(S_\lambda,\RR)$
represented by a closed positive (1,1)-form $g(\cdot, \lambda \cdot)$,
\begin{equation}
\label{omega-lambda}
\omega_\lambda=a\omega_I+b\omega_J+c\omega_K.
\end{equation}
The class $\omega_\lambda$ is orthogonal to the plane $P_\lambda=\langle Re\,\sigma_\lambda, Im\,\sigma_\lambda\rangle$
for a nonzero holomorphic 2-form $\sigma_\lambda \in H^{2,0}(S_\lambda)$.
The complexification $P_{\lambda,\CC}$ is spanned by $\sigma_\lambda$
and its conjugate $\overline{\sigma_\lambda}=\sigma_{-\lambda}$ and of the two forms
determined by $P_\lambda$ the form $\sigma_\lambda$, and thus the complex structure $\lambda$, is uniquely determined
by the fact that $\omega_\lambda$ is K\"ahler.
So every twistor family  over a hyperk\"ahler line $Q_{S,\alpha}$
is a family of $K3$ surfaces with a prescribed 
K\"ahler
class.

The class $\omega_J+i\omega_J$ up to proportionality over $\CC^*$ is the class of $\sigma_S\in H^{2,0}(S)$.
This tells us that 
we have the decomposition
\begin{equation}
\label{omega-lambda-decomposition}
V= \RR\alpha \oplus \langle Re\,\sigma_S, Im\,\sigma_S\rangle = \RR\omega_\lambda \oplus P_\lambda.
\end{equation}

\begin{rem}
\label{orthonormality-remark}
For the classes $\omega_I,\omega_J,\omega_K$ from the orthogonality relation
$\omega_I \perp \omega_J+i\omega_K$ we obviously have that $\omega_I\cdot \omega_J=\omega_I\cdot \omega_K=0$.
The fact that $\omega_J+i\omega_K$ is a class of a holomorphic 2-form means that
it is isotropic, $(\omega_J+i\omega_K)\cdot (\omega_J+i\omega_K)=0$, which implies that $\omega_J\cdot \omega_J=\omega_K\cdot \omega_K$
and $\omega_J\cdot \omega_K=0$. 
One can actually see that all $\omega_I,\omega_J, \omega_K$ have equal length: $ \omega_I \cdot \omega_I=\omega_J \cdot \omega_J
=\omega_K \cdot \omega_K$. This can be seen either from
the dicussion before Proposition 13.3 in \cite[Ch. VIII]{Barth} or
directly from the fact that if we start with the K\"ahler class $\omega_J$ on $(S,J)$
then the complex structures $K$ and $I$ can be obtained in the way described above,
so the cyclic permutation of our triple of complex structures indeed shows that $\omega_K\cdot \omega_K=\omega_J\cdot \omega_J$.
This shows that $\omega_I,\omega_J,\omega_K$ is an orthogonal basis of $V$ with
basis vectors having equal length. Moreover, it is clear that for all $\lambda=aI+bJ+cK\in S^2$ 
the classes 
%
$\omega_\lambda=a\omega_I+b\omega_J+c\omega_K $
 all have equal length.
\end{rem}
From now on we assume that the metric $g$ is normalized so that $\omega_\lambda$
is of length 1.

\begin{rem}
\label{verb-remark}
Note that a twistor line in $\Omega_\Lambda$ through a point $[\sigma_S]$, for $S$ such that $Pic(S)$ is trivial, is always
a line associated to a hyperk\"ahler structure on $S$. This is due to the fact that for such surfaces the K\"ahler cone is equal to
the positive cone. For this see Huybrechts, \cite{Erratum} or \cite [Prop. 5.4]{Bourbaki}, or  Verbitsky, \cite [Sec. 6]{Verbmapping}.
\end{rem}

\subsubsection{Generalities on hyperk\"ahler structures on products of K3's}
\label{Generalities-on-hyperkahler-structures}
Choose a positive 3-subspace $V\subset H^2(S_1, \RR)$ containing $\langle Re\, \sigma_{S_1}, Im \,\sigma_{S_1}\rangle$.
The projectivization $\mathbb{P}\eta_1(V_\mathbb{C})$ is a 2-plane in $\mathbb{P}\Lambda_\mathbb{C}$
and its intersection with the open subset of the quadric $Q$ determined by $(\cdot, \cdot)$ 
is a conic $Q_V$, namely, the twistor line through the point $[\eta_1(\sigma_{S_1})]$. 
\begin{defi}
The {\it twistor line $Q_{\phi, V}$ in $\Omega_\phi$ through $([\eta_1(\sigma_{S_1})], [\eta_2(\sigma_{S_2}))])$} is the graph $\Gamma_{\widetilde{\phi}}$ of the map $$\widetilde{\phi} \colon  Q_V \rightarrow \Omega_\Lambda,$$

$$[x] \mapsto [\phi(x)].$$
\end{defi}
The fact that the period of $(S_1\times S_2, (\eta_1,\eta_2))$ belongs to $\Omega_\phi$ means that 
$\psi=\eta^{-1}_2\phi \eta_1$ is a Hodge isometry and
$\widetilde{\phi}([(\eta_1(\sigma_{S_1}))]) = [\eta_2(\psi(\sigma_{S_2}))]$.
\begin{defi}
\label{Twistor-path-in-Omega-phi}
A {\it twistor path} in $\Omega_\phi$ consists of a finite ordered sequence $Q_1,Q_2, \dots, Q_k$ of twistor lines in $\Omega_\phi$,
such that every two consecutive lines intersect.
\end{defi}
Note that two twistor lines either intersect at precisely one point or do not intersect at all,
this is actually true for twistor lines in $\mathfrak M$ already.

Consider the twistor line $Q_{S_1,h}$ in $\Omega_\Lambda$ through $[\eta_1(\sigma_{S_1})]$ which  is determined by a K\"ahler class 
$h\in H^{1,1}(S_1, \RR)$. 
As earlier we consider the Hodge isometry $\psi = \eta_2^{-1}\phi \eta_1$.
Assume that $\psi(h)$ is a K\"ahler class in $H^2(S_2,\RR)$. 
Then the corresponding twistor line $Q_{\phi, V} \subset \Omega_\phi$
will be denoted  by $Q_{\psi, h}$. 
 The classes $h$, $\psi(h)$ and the original complex structures $I_1$ on $S_1$ and $I_2$ on $S_2$
determine hyperk\"ahler metrics $g_1$ and $g_2$ on $S_1$ and $S_2$.
Choose hyperk\"ahler structures $J^{'}_l,K^{'}_l$ compatible with metrics $g_l$, $l=1,2$.
We get a hyperk\"ahler structure $I:=I_1 \oplus I_2$, $J^{'}:=J^{'}_1 \oplus J^{'}_2, K^{'}:=K^{'}_1 \oplus K^{'}_2$
on $S_1 \times S_2$ compatible with the metric $g:=g_1 \oplus g_2$.
Again, like above the choice of $I,J^{'}, K^{'}$ is not unique.
Let us denote a particular choice of $I,J,K$ on $S_1 \times S_2$ by $\tau$.
Denote by $\pi \colon  \mathcal Y_\tau \rightarrow \PP^1_\tau$
the twistor family corresponding to the hyperk\"ahler structure $\tau=(I,J,K)$ on $S_1 \times S_2$.
The markings $\eta_1, \eta_2$ determine the map $\Pe_\tau \colon  \PP^1_\tau \rightarrow \Omega_\Lambda \times \Omega_\Lambda$ obtained as the composition of the classifying map $\PP^1_\tau \rightarrow \mathfrak M \times \mathfrak M$,
sending the particular point $\pi(S_1 \times S_2, I_1\oplus I_2) \in \PP^1_\tau$
to $\bigl(((S_1,I_1), \eta_1), ((S_2,I_2), \eta_2)\bigr) \in \mathfrak M \times \mathfrak M$, with the Cartesian product of the period map $(\Pe,\Pe) \colon  \mathfrak M \times \mathfrak M \rightarrow  \Omega_\Lambda \times \Omega_\Lambda$.
The classifying map $\PP^1_\tau \rightarrow \mathfrak M \times \mathfrak M$ determines a 
deformation of the isometry $\psi$, considered as a family of marked pairs of $K3$'s, 
$((S_{1,\lambda_1}, \eta_{\lambda_1}), (S_{2,\lambda_2}, \eta_{\lambda_2}))$ where 
$\lambda=(\lambda_1,\lambda_2)=(aI_1+bJ_1+cK_1,aI_2+bJ_2+cK_2)\in S^2$, namely
$$\psi_\lambda=\eta^{-1}_{\lambda}\circ \eta_{(I_1,I_2)}(\psi)=\eta^{-1}_{\lambda_2}\eta_2\psi \eta_1^{-1}\eta_{\lambda_1}.$$ 

\begin{lem}
\label{lemma-hyperkahler-twistor}
In the above notations, for any hyperk\"ahler structure $(I_1,J_1,K_1)$ on $S_1$
there exists a unique choice of hyperk\"ahler structure $(I_2,J_2,K_2)$
on $S_2$, 
such that the map $\Pe_\tau$ for $\tau=(I_1\oplus I_2,J_1\oplus J_2, K_1 \oplus K_2)$
sends $\PP^1_\tau$
isomorphically onto $Q_{\psi,h} \subset \Omega_\phi$.
\end{lem}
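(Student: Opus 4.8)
\emph{Proof sketch.} The plan is to transport the hyperk\"ahler data from $S_1$ to $S_2$ through $\psi$, using the normalizations recorded in \S\ref{Hyperkahler-manifolds}: for a hyperk\"ahler structure $(I,J,K)$ the K\"ahler classes $(\omega_I,\omega_J,\omega_K)$ form an \emph{orthonormal} frame of the associated positive $3$-space; the assignment $\lambda\mapsto\omega_\lambda$ is linear on $\langle I,J,K\rangle$; and the holomorphic $2$-form of the complex structure $\lambda=aI+bJ+cK$ is proportional to $\omega_\mu+i\omega_\nu$ for any positively oriented orthonormal frame $(\lambda,\mu,\nu)$. Since $(I_1,J_1,K_1)$ is compatible with the metric $g_1$ determined by $I_1$ and $h$, we have $\omega_{I_1}=h$ and the positive $3$-space $V_1=\RR h\oplus\langle\mathrm{Re}\,\sigma_{S_1},\mathrm{Im}\,\sigma_{S_1}\rangle$. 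As $\psi$ is a Hodge isometry carrying $\langle\mathrm{Re}\,\sigma_{S_1},\mathrm{Im}\,\sigma_{S_1}\rangle$ onto $\langle\mathrm{Re}\,\sigma_{S_2},\mathrm{Im}\,\sigma_{S_2}\rangle$ and, by hypothesis, $h$ onto a K\"ahler class $\psi(h)$ of $S_2$, it restricts to an isometry $V_1\xrightarrow{\,\sim\,}V_2:=\RR\psi(h)\oplus\langle\mathrm{Re}\,\sigma_{S_2},\mathrm{Im}\,\sigma_{S_2}\rangle$, the positive $3$-space of the metric $g_2$ determined by $I_2$ and $\psi(h)$.

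To construct $(J_2,K_2)$, fix an auxiliary structure $(I_2,J'_2,K'_2)$ compatible with $g_2$ as in \S\ref{Generalities-on-hyperkahler-structures}. Since $\psi(\sigma_{S_1})$ is a nonzero multiple of $\sigma_{S_2}$, with $\sigma_{S_1}\propto\omega_{J_1}+i\omega_{K_1}$ and $\sigma_{S_2}\propto\omega_{J'_2}+i\omega_{K'_2}$, and $\psi$ sends the orthonormal pair $(\omega_{J_1},\omega_{K_1})$ to an orthonormal pair in $\langle\omega_{J'_2},\omega_{K'_2}\rangle$, there is a unique $\theta\in\RR/2\pi\ZZ$ with
\[
\psi(\omega_{J_1})+i\,\psi(\omega_{K_1})=e^{i\theta}\bigl(\omega_{J'_2}+i\omega_{K'_2}\bigr).
\]
Set $J_2:=\cos\theta\cdot J'_2-\sin\theta\cdot K'_2$ and $K_2:=\sin\theta\cdot J'_2+\cos\theta\cdot K'_2$. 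A direct check gives $I_2J_2=K_2$ and the remaining quaternionic relations, so $(I_2,J_2,K_2)$ is a hyperk\"ahler structure compatible with $g_2$, with $\omega_{J_2}=\psi(\omega_{J_1})$ and $\omega_{K_2}=\psi(\omega_{K_1})$; together with $\omega_{I_2}=\psi(h)=\psi(\omega_{I_1})$ this says $\psi\colon V_1\to V_2$ carries the oriented orthonormal frame $(\omega_{I_1},\omega_{J_1},\omega_{K_1})$ onto $(\omega_{I_2},\omega_{J_2},\omega_{K_2})$.

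Now put $\tau=(I_1\oplus I_2,\,J_1\oplus J_2,\,K_1\oplus K_2)$. Over $\lambda=(a,b,c)\in S^2$ the fibre of $\mathcal Y_\tau\to\PP^1_\tau$ is $(S_1,\lambda_1)\times(S_2,\lambda_2)$ with $\lambda_j=aI_j+bJ_j+cK_j$, and linearity plus the frame identities give $\psi(\omega_{1,\mu})=\omega_{2,\mu}$ for all $\mu\in S^2$. Choosing a positively oriented orthonormal frame $(\lambda,\mu,\nu)$, we have $\sigma_{1,\lambda}\propto\omega_{1,\mu}+i\omega_{1,\nu}$, and since $(I_2,J_2,K_2)$ is a genuine positively oriented hyperk\"ahler triple the same recipe computes $\sigma_{2,\lambda}$; hence $\psi(\sigma_{1,\lambda})\propto\omega_{2,\mu}+i\omega_{2,\nu}\propto\sigma_{2,\lambda}$. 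Therefore
\[
\Pe_\tau(\lambda)=\bigl([\eta_1(\sigma_{1,\lambda})],[\eta_2(\sigma_{2,\lambda})]\bigr)=\bigl([\eta_1(\sigma_{1,\lambda})],[\phi(\eta_1(\sigma_{1,\lambda}))]\bigr)\in\Gamma_{\widetilde\phi}=Q_{\psi,h},
\]
and since $\lambda\mapsto[\eta_1(\sigma_{1,\lambda})]$ maps $\PP^1_\tau$ isomorphically onto $Q_{S_1,h}$ (\S\ref{Hyperkahler-manifolds}) while $Q_{\psi,h}=\Gamma_{\widetilde\phi}$ projects isomorphically onto $Q_{S_1,h}$, the map $\Pe_\tau\colon\PP^1_\tau\to Q_{\psi,h}$ is an isomorphism.

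For uniqueness, suppose $(I_2,J''_2,K''_2)$ is another hyperk\"ahler structure whose twistor family has $\Pe_{\tau''}$ landing in $Q_{\psi,h}$ with full image. Comparing swept conics forces its positive $3$-space to be $V_2$, so its K\"ahler class $\omega''_{I_2}$, a positive length-one class of $V_2$ orthogonal to $\langle\mathrm{Re}\,\sigma_{S_2},\mathrm{Im}\,\sigma_{S_2}\rangle$, equals $\psi(h)$; by uniqueness of the hyperk\"ahler metric with a given K\"ahler class (\cite{Bea}, \cite[Ch. VIII]{Barth}) the triple is compatible with $g_2$, hence a rotation of $(J'_2,K'_2)$. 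The requirement $\sigma''_{2,\lambda}\propto\psi(\sigma_{1,\lambda})$ for all $\lambda$ then forces $\omega''_{2,\lambda}=\omega_{2,\lambda}$ for all $\lambda$; evaluating at $\lambda$ equal to $(0,1,0)$ and $(0,0,1)$ yields $\omega''_{J_2}=\omega_{J_2}$ and $\omega''_{K_2}=\omega_{K_2}$, whence $(J''_2,K''_2)=(J_2,K_2)$. I expect the crux to be the orientation compatibility in $\psi(\sigma_{1,\lambda})\propto\sigma_{2,\lambda}$: one must ensure that $\psi$ sends $(\omega_{I_1},\omega_{J_1},\omega_{K_1})$ to a frame of $V_2$ whose orientation makes the ``$\sigma\propto\omega_\mu+i\omega_\nu$'' recipe of \S\ref{Hyperkahler-manifolds} valid on $S_2$ --- which is exactly what pinning the rotation angle $\theta$ by $\psi(\sigma_{S_1})\propto\sigma_{S_2}$ guarantees.
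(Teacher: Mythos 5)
Your proof is correct and follows essentially the same route as the paper: you pin down $(J_2,K_2)$ by requiring $\omega_{J_2}=\psi(\omega_{J_1})$, $\omega_{K_2}=\psi(\omega_{K_1})$ (realized explicitly as a rotation of an auxiliary triple, which is exactly how the paper determines them), check the quaternionic relations via the orthonormal-frame facts of Remark \ref{orthonormality-remark}, and conclude that the periods of the twistor family land isomorphically on the graph $Q_{\psi,h}$. The only cosmetic difference is that you verify the Hodge/graph condition directly for every $\lambda$ and spell out uniqueness via the swept conic and the uniqueness of the hyperk\"ahler metric, where the paper instead reduces to the condition $\psi(\omega_{\lambda_1})=\omega_{\lambda_2}$ by a continuity-plus-initial-condition argument.
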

\begin{proof}
In order to show the existence we need to find complex structures $J_2,K_2$
fitting into a hyperk\"ahler structure 
$(I_2,J_2,K_2)$ 
such that
for every $\lambda =(\lambda_1,\lambda_2)=(aI_1+bJ_1+cK_1,aI_2+bJ_2+cK_2)$ in the corresponding sphere $S^2$ we have
that 
$$\psi_\lambda \colon  H^2(S_{1,\lambda_1},\CC) \rightarrow H^2(S_{2,\lambda_2},\CC)$$ is a Hodge isometry. 
This is precisely the condition that 
$\bigl(\Pe_\tau((S_{1,\lambda_1},\eta_1)),\Pe_\tau((S_{2,\lambda_2},\eta_2))\bigr)$ belongs to $Q_{\psi,h}$.
%
For any fixed $\lambda$ the Hodge condition $\psi_\lambda(H^{2,0}(S_{1,\lambda_1}))=H^{2,0}(S_{2,\lambda_2})$
together with the 'graph' condition
$$\psi(\RR h \oplus\langle Re\, \sigma_{S_1}, 
Im\, \sigma_{S_1}\rangle)=\RR \psi(h) \oplus\langle Re\, \sigma_{S_2}, 
Im\, \sigma_{S_2}\rangle,$$ rewritten in terms of $\psi_\lambda$ as 
$$\psi_\lambda(\RR \omega_{\lambda_1} \oplus\langle Re\, \sigma_{S_{1,\lambda_1}}, 
Im\, \sigma_{S_{1,\lambda_1}}\rangle )=
\RR \omega_{\lambda_2} \oplus   \langle Re\, \sigma_{S_{2,\lambda_2}}, 
Im\, \sigma_{S_{2,\lambda_2}}\rangle$$
is equivalent 
to the condition 
$\psi_\lambda(\omega_{\lambda_1})=\pm \omega_{\lambda_2}$,
where $\omega_{\lambda_1}$ and $\omega_{\lambda_2}$ are (length 1) K\"ahler classes on $(S_1,\lambda_1)$
and $(S_2,\lambda_2)$ defined by Equation (\ref{omega-lambda}). In the view of the above identifications
and the fact that $V_{\lambda_1}, \psi_\lambda(V_{\lambda_1})$ do not vary,
this is equivalent to $\psi(\omega_{\lambda_1})=\pm \omega_{\lambda_2}$.
 Now, given
that $\omega_{\lambda_1} \in V, \omega_{\lambda_2} \in \psi(V)$ depend continuously on $\lambda$
and that $\psi(\omega_{I_1})=\omega_{I_2}$
we have that the condition that $\psi_\lambda$ is a Hodge isometry for all $\lambda$ 
is equivalent to the condition
\begin{equation}
\label{equation-equivariant-isometry}
\psi(\omega_{\lambda_1})= \omega_{\lambda_2},
\end{equation}
for all $\lambda=(\lambda_1,\lambda_2) \in S^2$.

As we have for granted that $\psi(\omega_{I_1})=\omega_{I_2}$, in order to satisfy Condition (\ref{equation-equivariant-isometry})
it is sufficient to find  $J_2$ and $K_2$ such
that $\psi(\omega_{J_1})=\omega_{J_2}$
and $\psi(\omega_{K_1})=\omega_{K_2}$. 
These two conditions determine $J_2$ and $K_2$ in a unique way, see the discussion before 
Remark \ref{orthonormality-remark}. Due to Remark \ref{orthonormality-remark} these complex structures
together with $I_2$ satisfy the quaternionic identities and thus indeed form a hyperk\"ahler structure.
\end{proof}

We are using now the above introduced notations
$\tau$, $\pi \colon  \mathcal Y_\tau \rightarrow \PP^1_\tau$
and the classifying map $\PP^1_\tau \rightarrow \mathfrak M \times \mathfrak M$,
sending the particular point $\pi(S_1 \times S_2, I_1\oplus I_2) \in \PP^1_\tau$ to $\bigl(((S_1,I_1), \eta_1), ((S_2,I_2), \eta_2)\bigr) \in \mathfrak M \times \mathfrak M$.
\begin{defi}
\label{hyper-product-definition}
The image of the base $\PP^1_\tau$
of the family $\pi \colon  \mathcal Y_\tau \rightarrow 
\PP^1_\tau$
in $\mathfrak M \times \mathfrak M$ under the classifying map
is called the {\it twistor line through $((S_1,\eta_1),(S_2,\eta_2)) \in \mathfrak M \times \mathfrak M$ determined by $\psi$ and $h$} and is denoted 
by
\begin{equation}
\PP_{\psi,h}.
\label{Twistor-line}
\end{equation} 
\end{defi}
The line $\PP_{\psi, h}\subset \mathfrak M \times \mathfrak M$ is a lift of $Q_{\psi, h}\subset \Omega_\phi\subset \Omega_\Lambda \times \Omega_\Lambda$ to $\mathfrak{M} \times \mathfrak{M}$
with respect to $(\Pe, \Pe) \colon  \mathfrak M \times \mathfrak M \rightarrow \Omega_\Lambda \times \Omega_\Lambda$,
so that $\PP_{\psi, h}\subset (\Pe, \Pe)^{-1}(\Omega_\phi)$. We met this kind of lines in $\mathfrak M \times \mathfrak M$ above, when defined the map $\mathcal P_t$. The reason to give them a name now is because
Lemma \ref{lemma-hyperkahler-twistor}
tells us that any line of the form $\PP_{\psi,h}$ is associated to
 a hyperk\"ahler structure on $S_1\times S_2$. 

Note that comparing to the case of twistor lines in $\mathfrak M$, which may or may not be associated to hyperk\"ahler
structures on $K3$ surfaces, we here define a twistor line on $\mathfrak M \times \mathfrak M$ specifically as one associated to a hyperk\"ahler structure (on a product of $K3$ surfaces).
For our purposes we need only this ``restricted'' definition. 

\begin{rem}
\label{Lifting-twisted-remark}
Again as in Remark \ref{verb-remark}, if $Pic(S_1)=Pic(S_2)=\langle 0\rangle$ for $K3$ surfaces $(S_1,I_1)$ and $(S_2,I_2)$
then $Pic (S_1\times S_2)=\langle 0\rangle$ and any twistor line $Q_{\phi, V}$ is associated to a hyperk\"ahler structure
on $S_1\times S_2$ determined by $(I_1,I_2)$ and a K\"ahler class $h+\psi(h) \in H^{1,1}(S_1\times S_2, \RR)$.
Thus there exists a lift $$\PP_{\phi, V}$$ of $Q_{\phi, V}$ to $\mathfrak M \times \mathfrak M$.
\end{rem}

\subsection {Definition of the moduli spaces}
\label{Definition-of-the-moduli-spaces}

Here we introduce two notions of a moduli space. 
Both of the moduli spaces we want to define involve the self-product of the moduli space of marked $K3$ surfaces.
As we have a decomposition into connected components, $\mathfrak M=\mathfrak M^{+}\cup\mathfrak M^{-}$,
the self-product inherits it in an obvious form 
$$\mathfrak M \times \mathfrak M=\mathfrak M^{++}\cup\mathfrak M^{+-}\cup\mathfrak M^{-+}\cup\mathfrak M^{--}.$$
The crucial part for the construction of moduli spaces is $\mathfrak M^{++}\cup\mathfrak M^{--}$.

For a $K3$ surfaces $S$ introduce the set 
$$C_S
= \{\alpha \in H^{1,1}(S, \RR) | (\alpha, \alpha)>0 \}.$$
This set has two connected components.
Denote by 

\begin{equation}
C^+_S
\end{equation}
 the connected component containing the K\"ahler cone of $S$.
By definition this is the {\it positive cone of the K3 surface S}.
Let us denote the K\"ahler cone of $S$ by $K_S$.
\begin{defi} A Hodge isometry $\varphi \colon  H^2(S_1, \mathbb{R}) \rightarrow H^2(S_2, \mathbb{R})$ is called
{\it signed} if it maps $C^+_{S_1}$ to $C^+_{S_2}$. It is called {\it non-signed} if it is not signed.
\end{defi}
Note that for any non-signed isometry $\varphi$ the isometry $-\varphi$ is signed.
Here we note that there is a naturally arising  orientation of the positive cone $C_{S_i}$ in $$H^2(S_i, \mathbb{R}) = 
((H^{2,0}(S_i, \CC)\oplus H^{0,2}(S_i, \CC))\cap H^2(S_i, \RR))\oplus H^{1,1}(S_i, \mathbb{R})$$ determined by $C^+_{S_i}$
in
 $H^{1,1}(S_i, \mathbb{R})$: any choice of $\alpha \in C^+_{S_i}$ gives a positive definite 3-space $V$
with an orientation determined by basis $(\alpha,Re \,  \sigma_{S_i}, Im \, \sigma_{S_i})$.
\begin{defi} An isometry $\phi \colon  \Lambda_\RR \rightarrow \Lambda_\RR$ is called {\it signed} if
for some (and, hence, for any) point $((S_1,\eta_1), (S_2,\eta_2)) \in 
(\Pe,\Pe)^{-1}(\Omega_\phi) \cap (\mathfrak M^{++}\cup \mathfrak M^{--})$ 
the (Hodge) isometry
$\eta^{-1}_2 \phi \eta_1 \colon  H^2(S_1,\RR) \rightarrow H^2(S_2,\RR)$ 
is a signed Hodge isometry.
\end{defi}

\subsubsection{The cohomological moduli space.}
Now we want to introduce the {\it cohomological moduli space}.
Fix 
a signed isometry $\phi \colon \Lambda_\QQ \rightarrow \Lambda_\QQ$.

\begin{defi} The {\it cohomological moduli space} associated to a signed isometry $\phi \colon \Lambda_\QQ \rightarrow \Lambda_\QQ$
is a topological space defined as the set ${\mathcal M}_{\phi}$ consisting of all quadruples $x=((S_1, \eta_1), (S_2, \eta_2))$ where  $(S_1, \eta_1), (S_2, \eta_2) $  are marked $K3$ surfaces and $\psi_x$ is a signed Hodge isometry, $$ \psi_x \colon  H^2(S_1, \mathbb{Q}) \rightarrow H^2(S_2, \mathbb{Q}),$$ which satisfies

$\bullet \,\, \psi_x=\eta_2^{-1} \phi \eta_1$;

$\bullet \,\, \psi_x (K_{S_1}) \cap K_{S_2} \neq \emptyset.$
\label{Definition-cohomological-moduli}
\end{defi}



The topological space $\mathcal M_\phi$ is the locus mentioned in the introduction.  

\begin{thm}
$\mathcal M_\phi$ is a complex-analytic non-Hausdorff manifold.
\end{thm}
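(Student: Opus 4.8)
The plan is to build $\mathcal M_\phi$ as an open subset of a fibered product and equip it with the complex structure pulled back from the period domain $\Omega_\phi$, which is itself a complex manifold being the graph of the biholomorphism $\widetilde\phi \colon \Omega_\Lambda \to \Omega_\Lambda$. First I would observe that there is a natural map $\mathcal M_\phi \to \mathfrak M \times \mathfrak M$, $x=((S_1,\eta_1),(S_2,\eta_2)) \mapsto ((S_1,\eta_1),(S_2,\eta_2))$, and that its image is contained in $(\Pe,\Pe)^{-1}(\Omega_\phi)\cap(\mathfrak M^{++}\cup\mathfrak M^{--})$, because the condition $\psi_x=\eta_2^{-1}\phi\eta_1$ forces the pair of periods to lie on the graph $\Omega_\phi$, and the signedness requirement places the pair in $\mathfrak M^{++}\cup\mathfrak M^{--}$. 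Conversely, given a point of $(\Pe,\Pe)^{-1}(\Omega_\phi)\cap(\mathfrak M^{++}\cup\mathfrak M^{--})$, the isometry $\psi=\eta_2^{-1}\phi\eta_1$ is automatically a Hodge isometry (periods on the graph) and automatically signed (by the very definition of a signed isometry $\phi$ given just above the theorem). So the underlying set of $\mathcal M_\phi$ is exactly the further open locus inside $(\Pe,\Pe)^{-1}(\Omega_\phi)\cap(\mathfrak M^{++}\cup\mathfrak M^{--})$ cut out by the open condition $\psi_x(K_{S_1})\cap K_{S_2}\neq\emptyset$.

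The key steps, in order. \textbf{(1)} Show that $(\Pe,\Pe)^{-1}(\Omega_\phi)$ is a complex-analytic submanifold of $\mathfrak M\times\mathfrak M$: since $\mathfrak M\to\Omega_\Lambda$ is a local isomorphism (local Torelli) and $\mathfrak M\times\mathfrak M\to\Omega_\Lambda\times\Omega_\Lambda$ is therefore a local isomorphism, the preimage of the smooth submanifold $\Omega_\phi=\Gamma_{\widetilde\phi}$ (which is a closed complex submanifold of $\Omega_\Lambda\times\Omega_\Lambda$, biholomorphic to $\Omega_\Lambda$ via the first projection) is locally cut out cleanly, hence is a complex submanifold of dimension $20$, locally biholomorphic to $\Omega_\phi$. \textbf{(2)} Intersecting with the open-and-closed subset $\mathfrak M^{++}\cup\mathfrak M^{--}$ changes nothing about the manifold structure. \textbf{(3)} Show $\psi_x(K_{S_1})\cap K_{S_2}\neq\emptyset$ is an open condition in the family, so $\mathcal M_\phi$ is an open subset of the manifold from (1)--(2) and inherits a complex structure; this uses that the Kähler cone varies in an open fashion over the moduli space (e.g.\ via the description of the Kähler cone as a chamber in the positive cone cut out by the $(-2)$-classes, which only get "more open" under small deformation, together with continuity of $\psi_x$ as a flat section). \textbf{(4)} Non-Hausdorffness: inherit it from the well-known non-Hausdorffness of $\mathfrak M$ itself — two non-isomorphic $K3$ surfaces with the same period (differing by a $(-2)$-reflection/flop) give non-separated points of $\mathfrak M$, and pairing such a surface with a fixed second surface (chosen with generic enough Picard lattice that the Kähler-cone condition persists) produces non-separated points of $\mathcal M_\phi$; one must check such a configuration actually occurs inside $\mathcal M_\phi$, which can be arranged by the surjectivity-of-period-map type arguments already in play. \textbf{(5)} Pure-dimensionality / manifold (not just locally ringed space): immediate from (1)--(3).

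\textbf{The main obstacle} I expect is step (3): one has to argue carefully that the Kähler-cone incidence condition $\psi_x(K_{S_1})\cap K_{S_2}\neq\emptyset$ is genuinely open, i.e.\ stable under small deformations of the marked pair within $(\Pe,\Pe)^{-1}(\Omega_\phi)$. The subtlety is that $\psi_x$ itself varies (it is the flat transport $\eta_{2,t}^{-1}\phi\eta_{1,t}$), and the Kähler cones of both $S_1$ and $S_2$ jump as one crosses walls. The cleanest way I would handle this is to fix a point $x_0$ with $h\in\psi_{x_0}(K_{S_1})\cap K_{S_2}$ and use the twistor lines $\PP_{\psi,h}$ constructed in Lemma~\ref{lemma-hyperkahler-twistor} and Remark~\ref{Lifting-twisted-remark}: these exhibit a two-parameter family (the hyperkähler twistor sphere, varied over nearby periods) of deformations along which the Kähler class $h$ (and hence a Kähler class for the deformed surfaces lying in the image of the deformed cone) persists, which together with the local-isomorphism property of $\Pe$ fills out an open neighbourhood of $x_0$ in $(\Pe,\Pe)^{-1}(\Omega_\phi)$ all of whose points satisfy the incidence condition. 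This reduces the openness statement to the local surjectivity of the period map restricted to twistor families, which is standard. Once step (3) is in place, the remaining steps are routine: the manifold structure is the one transported from $\Omega_\phi\cong\Omega_\Lambda$, and non-Hausdorffness is a direct import of the classical non-separatedness phenomenon in $\mathfrak M$.
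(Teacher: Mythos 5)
Your steps (1)--(2) are exactly the paper's argument: $\mathcal M_\phi$ is exhibited as a subset of $(\Pe,\Pe)^{-1}(\Omega_\phi)$, which is a complex submanifold of $\mathfrak M\times\mathfrak M$ because $\Pe$ is a local analytic isomorphism and $\Omega_\phi=\Gamma_{\widetilde\phi}$ is a complex submanifold of $\Omega_\Lambda\times\Omega_\Lambda$; the whole content then rests on the condition $\psi_x(K_{S_1})\cap K_{S_2}\neq\emptyset$ being open. Where you diverge is precisely at that point, and your mechanism for it does not work. The union of twistor lines $\PP_{\psi,h}$ through a fixed point $x_0$, as $h$ ranges over $K_{S_1}\cap\psi_{x_0}^{-1}(K_{S_2})$, has real dimension at most $20+2=22$, while $(\Pe,\Pe)^{-1}(\Omega_\phi)$ has real dimension $40$; so these lines cannot fill an open neighbourhood of $x_0$. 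Your fix --- "varying the twistor line over nearby periods" --- is circular: to produce a twistor line of the form $\PP_{\psi,h}$ based at a nearby point $x$ you already need a class $h'\in K_{X}$ with $\psi_x(h')$ Kähler, i.e.\ exactly the incidence condition whose openness you are trying to establish. The paper does not argue this way at all: it simply invokes the standard fact that the Kähler-cone incidence condition is open in families, citing \cite[Ch.~VIII, Prop.~9.4]{Barth}. If you want a self-contained proof of step (3), the correct route is the deformation-theoretic openness of the Kähler property of a continuously varying class (the statement behind that citation), not the twistor-line machinery, which in this paper is used later, for connectedness of $\mathcal M_\phi^{\pm}$ and for deforming sheaves, not for the manifold structure.

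Two smaller remarks. Your step (4) is unnecessary: the adjective ``non-Hausdorff'' in the statement records that Hausdorffness is not being claimed (it fails already for $\mathfrak M$), and the paper proves nothing in that direction; exhibiting actual non-separated points inside $\mathcal M_\phi$ is not required (and your sketch of it would itself need the Kähler-cone condition to persist at the two competing lifts, which you only assert). Finally, note that the set-theoretic identification you make at the start is slightly off in one direction: a point of $(\Pe,\Pe)^{-1}(\Omega_\phi)\cap(\mathfrak M^{++}\cup\mathfrak M^{--})$ need not lie in $\mathcal M_\phi$, since the Kähler-cone condition of Definition \ref{Definition-cohomological-moduli} is a genuine further (open) restriction; this is harmless for your argument but should be stated as such.
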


\begin{proof}
To prove that $\mathcal M_\phi$ is a complex-analytic manifold
it is sufficient to show that $\mathcal M_\phi$ is an open subset of the complex-analytic manifold $(\Pe,\Pe)^{-1}(\Omega_\phi) \subset \mathfrak M \times \mathfrak M$ where $(\Pe,\Pe) \colon  \mathfrak M \times \mathfrak M \rightarrow \Omega_\Lambda \times \Omega_\Lambda$ is the Cartesian product
of the period map $\Pe$. Indeed, we have that $(\Pe,\Pe)^{-1}(\Omega_\phi)$ is a complex-analytic submanifold
in $\mathfrak M \times \mathfrak M$ since the period map $\Pe \colon \mathfrak M \rightarrow \Omega_\Lambda$
is a local analytic isomorphism and $\Omega_\phi$  is a complex-analytic submanifold in $\Omega_\Lambda \times \Omega_\Lambda$.

A point $((S_1, \eta_1), (S_2,\eta_2)) \in (\Pe,\Pe)^{-1}(\Omega_\phi)$ belongs to $\mathcal M_\phi$
precisely when $$\eta^{-1}_2 \phi \eta_1 (K_{S_1}) \cap K_{S_2} \neq \emptyset.$$
The fact that this  is an open condition is proved in \cite[Ch. VIII, Prop. 9.4]{Barth}.
\end{proof} 
\begin{rem} There is an obvious forgetful map $$\Pe_\phi \colon {\mathcal M}_{\phi} \rightarrow \Omega_{\phi},$$
$$x=((S_1, \eta_1), (S_2, \eta_2)) \mapsto ([\eta_1(\sigma_{S_1})],[\eta_2(\sigma_{S_2})]).$$
Here in the definition of $\Pe_\phi$ we certainly use that $\psi_x (H^{2,0}(S_1, \mathbb{C}))=H^{2,0}(S_2, \mathbb{C})$
and $\eta_2\psi_x=\phi \eta_1.$ 
Note that the fiber $\Pe_\phi^{-1}(x)$ is a $W(S_1)\times W(S_2)$-torsor which can be 
idenified with direct product of the sets of chambers of positive cones of $S_1$ and $S_2$.
Here $W(S_1), W(S_2)$ are the respective Weil groups.
\end{rem}

\vspace*{2mm}

\begin{defi}  
\label{Twistor-path-definition}
A {\it twistor path} in $\mathcal M_\phi$ consists of a finite ordered sequence $\PP_1,\PP_2, \dots, \PP_k$ of twistor lines in $\mathcal M_\phi$, 
such that every two consecutive lines intersect. 

A twistor path is {\it generic} if each of the chosen points of intersection of consecutive twistor lines
corresponds to a pair of $K3$ surfaces with trivial Picard groups.

\end{defi}
It was shown in \cite{Bea-Geometrie}, see the more recent exposition in \cite{Bourbaki}, that every two periods $l_1,l_2 \in \Omega_\Lambda$ can be joined by a 
generic twistor path in $\Omega_\Lambda$. Now taking the graph of this twistor path under $\widetilde{\phi}$ in $\Omega_\phi$
we get a generic twistor path joining periods $(l_1,\widetilde{\phi}(l_1))$ and $(l_2,\widetilde{\phi}(l_2))$
in $\Omega_\phi$. Indeed, this graph is a generic twistor path in $\mathcal M_\phi$ 
because 1) products of $K3$'s corresponding to points $(q_i, \widetilde{\phi}(q_i)), q_i \in Q_i \cap Q_{i+1},$ 
have trivial Picard groups (as  Hodge isometric $K3$ surfaces have equal Picard numbers), and 2), as $\phi$ is signed, each of the lines of the graph is associated
to a hyperk\"{a}hler structure by Remark \ref{Lifting-twisted-remark} so 
it determines a twistor line from Definition \ref{hyper-product-definition}.

The following lemma will be used in the proof of Proposition \ref{prop-connected-components}.

\begin{lem}
\label{Kahler-cones-intersection}
 Let $x=((X,\eta_X),(Y,\eta_Y))$ be a point in $\mathcal M_\phi$, $\alpha\in K_X, \psi_x(\alpha) \in K_Y$.
Then the twistor line $\PP_{\psi,\alpha}\subset \mathfrak M \times \mathfrak M$ lies in
$\mathcal M_\phi$.
\end{lem}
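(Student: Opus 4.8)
The plan is to unwind the definitions and show that every point of the twistor line $\PP_{\psi,\alpha}$ satisfies the two bullet conditions of Definition \ref{Definition-cohomological-moduli}. Write $\psi=\psi_x=\eta_Y^{-1}\phi\eta_X$, and recall that $\PP_{\psi,\alpha}$ (Definition \ref{hyper-product-definition}) is, by construction, the classifying image of the base $\PP^1_\tau$ of the twistor family $\pi\colon\mathcal Y_\tau\to\PP^1_\tau$ attached to a hyperk\"ahler structure $\tau=(I_X\oplus I_Y,\,J_X\oplus J_Y,\,K_X\oplus K_Y)$ on $X\times Y$, where $(I_X,J_X,K_X)$ is the hyperk\"ahler structure determined by the complex structure $I_X$ and the K\"ahler class $\alpha$, and $(I_Y,J_Y,K_Y)$ is the hyperk\"ahler structure determined by $I_Y$ and $\psi(\alpha)\in K_Y$. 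The first step is therefore to invoke Lemma \ref{lemma-hyperkahler-twistor}: the hyperk\"ahler structure on $X$ coming from $(I_X,\alpha)$ has a unique companion hyperk\"ahler structure on $Y$ making $\PP_\tau$ land isomorphically onto $Q_{\psi,\alpha}\subset\Omega_\phi$, and since $\psi(\alpha)\in K_Y$ is itself a K\"ahler class this companion is exactly the one attached to $(I_Y,\psi(\alpha))$. In particular $\PP_{\psi,\alpha}$ lies in $(\Pe,\Pe)^{-1}(\Omega_\phi)$, so for every $\lambda=(\lambda_1,\lambda_2)\in S^2$ the deformed isometry $\psi_\lambda=\eta_{\lambda_2}^{-1}\eta_Y\,\psi\,\eta_X^{-1}\eta_{\lambda_1}$ is a Hodge isometry and satisfies $\psi_{\lambda_x}=\eta_2^{-1}\phi\eta_1$ at the corresponding marked pair; this gives the first bullet and, together with the signedness of $\phi$, shows each $\psi_\lambda$ is signed.

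The remaining step is the K\"ahler-cone condition (second bullet): for the marked pair $((X_{\lambda_1},\eta_{\lambda_1}),(Y_{\lambda_2},\eta_{\lambda_2}))$ sitting over $\lambda\in S^2$, one must check $\psi_\lambda(K_{X_{\lambda_1}})\cap K_{Y_{\lambda_2}}\neq\emptyset$. Here I would use the key geometric feature of a hyperk\"ahler twistor line recorded in the text around Equation (\ref{omega-lambda-decomposition}): the fibre $X_{\lambda_1}$ carries the distinguished K\"ahler class $\omega_{\lambda_1}=a\omega_{I_X}+b\omega_{J_X}+c\omega_{K_X}$, and likewise $Y_{\lambda_2}$ carries $\omega_{\lambda_2}=a\omega_{I_Y}+b\omega_{J_Y}+c\omega_{K_Y}$. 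By the construction in the proof of Lemma \ref{lemma-hyperkahler-twistor} the companion structure on $Y$ was chosen precisely so that $\psi(\omega_{I_X})=\omega_{I_Y}$, $\psi(\omega_{J_X})=\omega_{J_Y}$, $\psi(\omega_{K_X})=\omega_{K_Y}$; by linearity $\psi(\omega_{\lambda_1})=\omega_{\lambda_2}$ for all $\lambda$. Transporting this through the flat trivialisations (the identifications $V_{\lambda_1}=\eta_{\lambda_1}^{-1}\eta_X(V)$ and $\psi_\lambda(V_{\lambda_1})=\eta_{\lambda_2}^{-1}\eta_Y\psi(V)$ do not vary with $\lambda$), we get $\psi_\lambda(\omega_{\lambda_1})=\omega_{\lambda_2}$. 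Since $\omega_{\lambda_1}\in K_{X_{\lambda_1}}$ and $\omega_{\lambda_2}\in K_{Y_{\lambda_2}}$, the class $\omega_{\lambda_2}$ lies in $\psi_\lambda(K_{X_{\lambda_1}})\cap K_{Y_{\lambda_2}}$, which is therefore non-empty. This holds for every point of $\PP_{\psi,\alpha}$, so the whole twistor line lies in $\mathcal M_\phi$.

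I expect the main obstacle to be purely bookkeeping rather than conceptual: one must be careful that the "companion" hyperk\"ahler structure on $Y$ produced abstractly by Lemma \ref{lemma-hyperkahler-twistor} genuinely coincides with the one determined by the K\"ahler class $\psi(\alpha)$ — this uses that $\psi(\alpha)\in K_Y$ by hypothesis, so that $(I_Y,\psi(\alpha))$ does determine a hyperk\"ahler metric and structure in the first place, and uniqueness in Lemma \ref{lemma-hyperkahler-twistor} then forces agreement. A secondary care point is keeping the markings straight: $\PP_{\psi,\alpha}$ is defined as a subset of $\mathfrak M\times\mathfrak M$ via fixed markings $\eta_X,\eta_Y$, and one must verify that the induced isometry at each fibre is exactly $\eta_{(\cdot)}^{-1}\phi\,\eta_{(\cdot)}$ with the \emph{same} $\phi$, which is precisely the content of the displayed formula for $\psi_\lambda$ just before Lemma \ref{lemma-hyperkahler-twistor}. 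Once these identifications are pinned down, the argument is immediate from the normalisation $\psi(\omega_{\lambda_1})=\omega_{\lambda_2}$.
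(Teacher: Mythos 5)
Your proof is correct and follows essentially the paper's own route: both arguments reduce the claim to checking that the distinguished K\"ahler class $\omega_{\lambda_1}$ on each fibre is sent by $\psi_\lambda$ into the K\"ahler cone of $Y_{\lambda_2}$, and both obtain this from the hyperk\"ahler construction of Lemma \ref{lemma-hyperkahler-twistor}. The only (harmless) difference is that you quote the normalisation $\psi_\lambda(\omega_{\lambda_1})=\omega_{\lambda_2}$ directly from that lemma's construction and linearity, whereas the paper re-derives it on the spot via the signedness of $\psi_\lambda$ and the same ``initial condition and continuity'' argument.
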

\begin{proof}

The cohomology class $\alpha$ determines the line $Q=Q_{\psi,\alpha} \subset \Omega_\phi$.
Consider a connected lift of $Q$ to $\mathfrak M \times \mathfrak M$ containing the point $x$ 
Depending on whether the point $x$ belongs to $\mathfrak M^{++}$ or $\mathfrak M^{--}$  we have that all of the
 points $x_\lambda=((X_\lambda, \eta_{X_\lambda}), (Y_\lambda, \eta_{Y_\lambda})) \in 
\PP_{\psi, \alpha}$ that are lifts of points $\lambda \in Q \cong S^2$, are contained in either
$\mathfrak M^{++}$ or $\mathfrak M^{--}$.

Next, there is a distinguished K\"ahler class $\omega_\lambda \in K_{X_\lambda}$ which was introduced earlier. 
In order to prove that $x_\lambda$ belongs to $\mathcal M_\phi$ for every $\lambda \in Q$ 
it is sufficient  to check that $\psi_\lambda(\omega_\lambda) \in K_{Y_\lambda}$ for all $\lambda \in S^2$.
As $\psi_\lambda$ is a signed Hodge isometry we have that 
$$(\psi_\lambda (\omega_\lambda), Re\, \psi_\lambda(\sigma_{X_\lambda}), Im\, \psi_\lambda (\sigma_{X_\lambda}))$$
of the corresponding positive vector space $$\psi_\lambda(\langle \omega_\lambda, Re\, \sigma_{X_\lambda}, Im\, \sigma_{X_\lambda}
\rangle)
=\langle \psi_\lambda(\omega_\lambda), Re\, \sigma_{Y_\lambda}, Im\, \sigma_{Y_\lambda}\rangle$$ is positively oriented.
%
%
%
%
By the same 'initial condition and continuity argument' as in Lemma
\ref{lemma-hyperkahler-twistor} the class $\psi_\lambda(\omega_\lambda)$ must be K\"ahler. 
Now we see that the whole twistor line  $\PP_{\psi, \alpha}$ lies in $\mathcal M_\phi$.
\end{proof}

In the proofs of Proposition \ref {prop-connected-components}
and Proposition \ref{Surjectivity-prop}
below we will need to know how to construct a generic twistor line 
in $\mathfrak M \times \mathfrak M$ passing through a given point $x \in \mathcal M_\phi $. 

\begin{lem}
\label{generifying}
Given $x=((X,\eta_X), (Y,\eta_Y)) \in \mathcal M_\phi$ we can find a generic twistor line $\PP_{\psi_x,h} \subset \mathfrak M \times \mathfrak M$ 
for some $h \in K_X \cap \psi_x^{-1}(K_Y)$.
Moreover,  in the case of $X$ with cyclic Picard group, $Pic(X) \cong \ZZ h, h \in  Pic(X) \cap K_X \cap \psi_x^{-1}(K_Y)$, the twistor line $\PP_{\psi_x, h}$ 
is generic.
\end{lem}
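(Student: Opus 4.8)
The plan is to choose $h$ by a Baire category argument inside the open cone $K_X\cap\psi_x^{-1}(K_Y)$, after first identifying precisely when a line $\PP_{\psi_x,h}$ is generic. First I would record that $K_X\cap\psi_x^{-1}(K_Y)$ is a nonempty open subset of the finite-dimensional real vector space $H^{1,1}(X,\RR)$: it is nonempty because $x\in\mathcal M_\phi$, by the second bullet of Definition \ref{Definition-cohomological-moduli}; it is open because $\psi_x$ restricts to a linear isomorphism $H^{1,1}(X,\RR)\to H^{1,1}(Y,\RR)$ carrying $\psi_x^{-1}(K_Y)$ onto the open cone $K_Y$, while $K_X$ is open in $H^{1,1}(X,\RR)$. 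For any $h$ in this cone we have $h\in K_X$ and $\psi_x(h)\in K_Y$, so by Lemma \ref{lemma-hyperkahler-twistor} and Definition \ref{hyper-product-definition} the hyperk\"ahler line $\PP_{\psi_x,h}\subset\mathfrak M\times\mathfrak M$ is defined, and by Lemma \ref{Kahler-cones-intersection} it lies in $\mathcal M_\phi$.

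Next I would compute the Picard lattice of a general member of the twistor family over $\PP_{\psi_x,h}$. Setting $V_1:=\RR h\oplus\langle Re\,\sigma_X,Im\,\sigma_X\rangle\subset H^2(X,\RR)$ and identifying $H^2(X_\lambda,\ZZ)$ with $H^2(X,\ZZ)$ topologically, a class $v$ is of type $(1,1)$ on $X_\lambda$ iff it is real-orthogonal to $\langle Re\,\sigma_{X_\lambda},Im\,\sigma_{X_\lambda}\rangle$; since that plane sits inside $V_1$ with orthogonal complement $\RR\omega_\lambda$ there, this holds iff the orthogonal projection of $v$ onto $V_1$ lies in $\RR\omega_\lambda$. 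Hence: if $V_1^\perp\cap H^2(X,\ZZ)=h^\perp\cap Pic(X)$ is nonzero, then every $X_\lambda$ has nonzero Picard group; whereas if $h^\perp\cap Pic(X)=0$, then $Pic(X_\lambda)=0$ for all $\lambda$ outside the countable set where some $v\in H^2(X,\ZZ)$ has its (necessarily positive) $V_1$-projection proportional to $\omega_\lambda$. The same dichotomy applies to the $Y$-factor with $h$ replaced by $\psi_x(h)$; and since $\psi_x$ is a rational Hodge isometry it carries $Pic(X)_\QQ$ isomorphically onto $Pic(Y)_\QQ$ and $h^\perp$ onto $\psi_x(h)^\perp$, so $\psi_x(h)^\perp\cap Pic(Y)=0$ iff $h^\perp\cap Pic(X)=0$. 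Thus $\PP_{\psi_x,h}$ is a generic twistor line exactly when $h^\perp\cap Pic(X)=0$.

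It then remains to find such an $h$. The bad locus $\bigcup_{0\neq v\in Pic(X)}v^\perp$ is a countable union of proper linear hyperplanes of $H^{1,1}(X,\RR)$ (proper since the form is nondegenerate), so by the Baire category theorem it cannot contain the nonempty open set $K_X\cap\psi_x^{-1}(K_Y)$; any $h$ in the complement yields a generic twistor line $\PP_{\psi_x,h}$, proving the first assertion. For the \emph{moreover} part, when $Pic(X)=\ZZ h$ with $h\in Pic(X)\cap K_X\cap\psi_x^{-1}(K_Y)$, the class $h$ is ample, so $(h,h)>0$ and $h^\perp\cap Pic(X)=\{nh : n(h,h)=0\}=0$, so $\PP_{\psi_x,h}$ is automatically generic. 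The one genuinely non-formal step is the identification of the Picard lattice of a general twistor-family member, that is, the real-orthogonality bookkeeping above together with transporting the condition from $Y$ back to $X$ through the rational isometry $\psi_x$; everything else is the Baire argument.
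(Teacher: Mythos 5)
Your argument is correct and follows essentially the same route as the paper: you reduce genericity of $\PP_{\psi_x,h}$ to the linear condition $h^\perp\cap Pic(X)=0$ (the paper's condition (\ref{trivial-Picard})), produce such an $h$ in the nonempty open cone $K_X\cap\psi_x^{-1}(K_Y)$ by a Baire/countable-union-of-hyperplanes argument, transport triviality of the Picard group to the $Y$-factor through the rational Hodge isometry $\psi_x$, and settle the cyclic-Picard case from $(h,h)>0$. The only difference is cosmetic: the paper runs the Baire step as a proof by contradiction, while you phrase it directly and additionally note that all but countably many members of the twistor family have trivial Picard group.
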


\begin{proof}
If $Pic(X \times Y)$ is trivial then the statement of the lemma is tautologically true, any $h \in K_X$ works.
Let us assume now that $Pic(X) \neq \{0\}$.
We are going to find $h \in V:=K_{X}\cap \psi_x^{-1}(K_{Y})$ such that the twistor line $Q_{\psi, h} \subset \Omega_\phi$ 
contains the period of some $X^{\prime} \times Y^{\prime}$ with $Pic(X^{\prime})$ trivial. Then the corresponding, via $\phi$, surface $Y^{\prime}$ will also have a trivial Picard group. So
we will get $Pic(X^{\prime} \times Y^{\prime})=0$.  
Thus, the problem is reduced to finding $h$ as above such that the twistor line 
$$Q_{X,h}=\mathbb{P}\eta_X(\langle h, Re \, \sigma_{X}, Im\, \sigma_{X} \rangle)_{\mathbb{C}}\cap Q \subset \Omega_\Lambda,$$
contains the period $(t,\widetilde{\phi}(t))$ of a $K3$ surface with trivial Picard group.

Denote the subspace $\langle h, Re \, \sigma_{X}, Im\, \sigma_{X}\rangle$ for $h \in \Lambda_\RR$ by $W_h$.
In order for $Q_{X, h}$ to contain a period corresponding to a surface with trivial
Picard group we need 
\begin{equation}
\label{trivial-Picard}
\eta_X(W_h) \not\subset \underset{0\neq \lambda \in \Lambda}{ \cup}\lambda^{\perp},
\end{equation}
here the orthogonal complements $\lambda^{\perp}$ are taken in $\Lambda_\RR$.
Notice that $\eta_X(W_h) \subset \underset{0\neq \lambda \in \Lambda}{ \cup}\lambda^{\perp}$
is equivalent to existence of nonzero $\lambda \in \Lambda$ such that $\eta_X(W_h) \subset \lambda_h^{\perp}$.

Assume that condition (\ref{trivial-Picard}) is broken for every $h \in V$.
Then for every $h \in V$ we have some $\lambda$ such that
$\eta_X(W_h) \subset \lambda^{\perp}$ and thus we may write
$$V\subset\underset{0\neq \lambda\in \Lambda}{ \cup}\lambda^{\perp},$$
So the open subset $V$ of $H^{1,1}(X,\RR)$ can be expressed as a countable union of its subsets
that are cut out by linear subspaces in $H^{1,1}(X,\RR)$
and thus there must be  $\lambda$ such that $V \subset \lambda^{\perp}$.
For this $\lambda$ we have 
$W_h\perp\eta^{-1}_X(\lambda)$ for every $h \in V$.
This means that $H^{2,0}(X, \RR) \oplus H^{0,2}(X, \RR) \perp \eta^{-1}_X(\lambda)$ and $H^{1,1}(X, \RR)\perp \eta^{-1}_X(\lambda)$
which contradicts the non-degeneracy of the form $(\cdot,\cdot)$.  
So there exists an $h$ such that $W_h$ determines a twistor line
containing a point $t$ with $X_t$ having trivial Picard group. Fix this $h$.

As $\psi$
is a rational isometry, the Picard group of $Y_{\widetilde{\phi}(t)}$ is also trivial.
Thus, we can find a period $(t,\widetilde{\phi}(t)) \in Q_{\psi_t, h} \subset \Omega_\phi$ determining surfaces $X^{\prime}, Y^{\prime}$ with trivial Picard groups
together with their markings and a rational isometry determining a point $((X^{'}, \eta_{X^{'}}),(Y^{'}, \eta_{Y^{'}})) \in \mathcal M_\phi $ with $Pic (X^{'}\times Y^{'})=\langle 0 \rangle.$ 

If $Pic(X) \cong \ZZ h$ for $h \in V$, then 
condition (\ref{trivial-Picard}) is satisfied. Indeed, if  we had
$\eta_X(W_h) \subset \lambda_h^{\perp}$ then like earlier $\eta^{-1}_X(\lambda_h)$ lies in $H^{1,1}(X,\ZZ)\cong \ZZ h$
and so is a (nonzero) integral multiple of $h$. This contradicts to the condition $h \perp \eta^{-1}_X(\lambda_h)$. 
Thus $\PP_{\psi-x,h}$ is generic.
\end{proof}

\begin{prop}
\label{prop-connected-components}
 The moduli space ${\mathcal M}_{\phi}$ consists of 2 connected components ${\mathcal M}^+_{\phi}
= {\mathcal M}_\phi \cap \mathfrak M^{++}$ and ${\mathcal M}^-_{\phi}={\mathcal M}_\phi \cap \mathfrak M^{--}$. 
Moreover, for any two points in the same connected component there exists a twistor path in the corresponding connected component consisting of lines of the form (\ref{Twistor-line})
and connecting these points.

\end{prop}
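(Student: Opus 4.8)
The plan is to first isolate the two clopen pieces $\mathcal M_\phi^+$ and $\mathcal M_\phi^-$ named in the statement, and then to join any two points of a fixed piece by a twistor path of lines of the form (\ref{Twistor-line}), reducing via Lemmas \ref{generifying} and \ref{Kahler-cones-intersection} to the case of two generic points, which is already handled by the discussion following Definition \ref{Twistor-path-definition}.

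First I would check $\mathcal M_\phi\subset\mathfrak M^{++}\cup\mathfrak M^{--}$. If $x=((S_1,\eta_1),(S_2,\eta_2))$ lay in $\mathfrak M^{+-}\cup\mathfrak M^{-+}$, then exactly one of $\eta_1,\eta_2$ would reverse the orientation a marking induces on positive $3$-subspaces of $H^2$; since $\Lambda$ has signature $(3,19)$, replacing that marking by its negative reverses this orientation while leaving $\psi_x=\eta_2^{-1}\phi\eta_1$ and $\phi$ unchanged, so comparison with a point of $(\Pe,\Pe)^{-1}(\Omega_\phi)\cap(\mathfrak M^{++}\cup\mathfrak M^{--})$, at which the induced Hodge isometry is signed because $\phi$ is, shows $\psi_x$ is non-signed and $x\notin\mathcal M_\phi$. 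Hence $\mathcal M_\phi=\mathcal M_\phi^+\sqcup\mathcal M_\phi^-$ with $\mathcal M_\phi^\pm:=\mathcal M_\phi\cap\mathfrak M^{\pm\pm}$ disjoint and open; both are non-empty, since $((S_1,\eta_1),(S_2,\eta_2))\mapsto((S_1,-\eta_1),(S_2,-\eta_2))$ preserves $\mathcal M_\phi$ (fixing $\psi_x$ and $\phi$) and swaps $\mathfrak M^{++}$ with $\mathfrak M^{--}$. It then remains to connect any two points of a fixed $\mathcal M_\phi^\pm$ by a twistor path of the asserted form lying in that component.

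Fix $x_0,x_1\in\mathcal M_\phi^+$, say. By Lemma \ref{generifying} there are classes $h_0,h_1$ with $\PP_{\psi_{x_j},h_j}$ a generic twistor line through $x_j$, and by Lemma \ref{Kahler-cones-intersection} each $\PP_{\psi_{x_j},h_j}$ lies in $\mathcal M_\phi$, hence, being connected and containing $x_j\in\mathcal M_\phi^+$, in $\mathcal M_\phi^+$. Pick generic points $y_j\in\PP_{\psi_{x_j},h_j}$, that is, points both of whose $K3$ factors have trivial Picard group, and let $l_j\in\Omega_\Lambda$ be the period of the first factor of $y_j$. By \cite{Bea-Geometrie} (see also \cite{Bourbaki}) there is a generic twistor path in $\Omega_\Lambda$ from $l_0$ to $l_1$, and by the discussion following Definition \ref{Twistor-path-definition} its graph under $\widetilde\phi$ is a generic twistor path in $\Omega_\phi$ from $\Pe_\phi(y_0)$ to $\Pe_\phi(y_1)$ that lifts to a twistor path $\PP_1,\dots,\PP_k$ in $\mathcal M_\phi$ of hyperk\"ahler lines (Remark \ref{Lifting-twisted-remark}), each of the form (\ref{Twistor-line}). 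Started at $y_0$, this lift stays in the component $\mathcal M_\phi^+$ by connectedness and ends at the unique point of $\mathcal M_\phi^+$ over $\Pe_\phi(y_1)$, which is $y_1$ because over a period with trivial Picard groups the relevant Weyl groups, and hence the fibers of $\Pe_\phi$ in each component, are trivial. Concatenating $\PP_{\psi_{x_0},h_0},\PP_1,\dots,\PP_k,\PP_{\psi_{x_1},h_1}$ gives the required twistor path from $x_0$ to $x_1$.

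The delicate point, and the one I expect to be the main obstacle, is precisely that this lift is a genuine twistor path \emph{inside} $\mathcal M_\phi$: consecutive lifted lines must really intersect, and each must stay in $\mathcal M_\phi$ and not merely in $\mathfrak M\times\mathfrak M$. The intersection is forced by the fiber of $\Pe_\phi$ over a period with trivial Picard groups being a single point in each component, so that the lifts of consecutive lines are compelled through their common generic intersection point; the containment uses that at such a point the defining conditions of $\mathcal M_\phi$---namely that $\psi$ be a signed Hodge isometry with $\psi(K_{S_1})\cap K_{S_2}\neq\emptyset$---hold automatically once $\phi$ is signed (the K\"ahler cones there being the full positive cones), after which Lemma \ref{Kahler-cones-intersection} puts the whole hyperk\"ahler line in $\mathcal M_\phi$. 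This is essentially the mechanism already sketched after Definition \ref{Twistor-path-definition}; the genuinely new ingredient here is the reduction of arbitrary points of $\mathcal M_\phi^\pm$ to generic ones via Lemma \ref{generifying}.
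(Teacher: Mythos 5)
Your proof is correct and follows essentially the same route as the paper's: generic twistor lines through the given points via Lemma \ref{generifying}, a lift of Beauville's generic twistor path in $\Omega_\phi$ using Remark \ref{Lifting-twisted-remark}, connectivity of the lift via uniqueness of the fiber of $\Pe_\phi$ over periods with trivial Picard groups (strong Torelli with trivial Weyl groups), and containment in $\mathcal M_\phi$ via Lemma \ref{Kahler-cones-intersection}; the only cosmetic difference is that the paper routes every point through one fixed generic point $y$ rather than generifying both endpoints. One small wording slip in your preliminary decomposition step (which the paper simply asserts): replacing $\eta_2$ by $-\eta_2$ does not leave $\psi_x$ unchanged but replaces it by $-\psi_x$; the comparison point in $\mathfrak M^{++}$ then has signed isometry $-\psi_x$ by signedness of $\phi$, whence $\psi_x$ is non-signed and the conclusion you want still follows.
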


\begin{proof}

First of all, set-theoretically $\mathcal M_\phi = \mathcal M^+_\phi \sqcup \mathcal M^-_\phi$.
Now let us prove the connectedness of each of the two sets in the decomposition.
Let us prove that ${\mathcal M}^+_{\phi}$ is connected.

We are going to show that any two points of $\mathcal M^+_\phi$ can be joined
by a twistor path.
First, pick up a point $y= ((S, \eta_{S}),(T, \eta_{T}))\in {\mathcal M}_\phi$ 
with periods $l:=\eta_S(H^{2,0}(S,\CC))$ and 
$\widetilde{\phi}(l)=\eta_T(H^{2,0}(T,\CC))$ in $\Omega_\Lambda$, 
such that the Picard groups $Pic (S)$ and $Pic (T)$ are trivial. 
We can certainly find such $(S, \eta_S)$  and $(T, \eta_T)$ with periods
$l$ and $\widetilde{\phi}(l)$ respectively,
because of surjectivity of the period map $\Pe \colon  \mathfrak{M} \rightarrow \Omega_\Lambda$ and the fact that
periods corresponding to surfaces with nontrivial Picard groups are contained in a countable union of hyperplanes
$\underset{\lambda \in \Lambda}\cup \mathbb{P}\lambda^{\perp}_\CC \subset \mathbb{P}\Lambda_\CC$.
Once we choose $l$ so as to have $Pic(S)=\langle 0 \rangle$, the group $Pic (T)$ is also trivial because $\psi=\eta^{-1}_T\phi\eta_S$ is a rational Hodge isometry,
hence the Picard groups of $S$ and $T$ are isomorphic.

Moreover, $\psi_y=\eta^{-1}_T\phi\eta_S$ is a signed Hodge isometry, so it takes $C^+_S$
to $C^+_T$. 
Again, as for $K3$ surfaces with trivial Picard group
the K\"ahler cone equals the positive cone, 
we conclude that the point $y$ indeed
belongs to $\mathcal M^+_\phi$.
Such a choice of $y$ means that $(S, \eta_{S})$ and $(T, \eta_{T})$ are the only points in the fibers of
the period map $\pi \colon  \mathfrak{M} \rightarrow \Omega_\Lambda$ over $l$ and $\widetilde{\phi}(l)$.
Indeed, let, for example, $(X, \eta_{X})$, $(Y, \eta_Y)$ be two distinct points in ${\mathcal M}_{\phi}$.
such that $$\pi(X, \eta_{X})=\pi(Y, \eta_Y)=l.$$
Now $g=\eta^{-1}_{Y}\eta_{X} \colon H^2(X, \mathbb{Z}) \rightarrow H^2(Y, \mathbb{Z})$ is a signed Hodge isometry.
So, by the strong Torelli theorem for $K3$ surfaces, see \cite{Piat}, there is an isomorphism $f \colon  Y \rightarrow X $ and $w \in W(Y)$, the Weil group of $Y$, generated by
reflections with respect to hyperplanes in $H^2(Y, \mathbb{Q})$ orthogonal to divisors corresponding to $(-2)$-curves on $Y$,
such that $g=w\circ f^*$.
But $W(Y)$ is trivial by the choice of $Y$. So $g=f^*$ and we have that $\eta_Y\circ f^* =\eta_X$, so that
$f^*$ induces an isomorphism of marked pairs $(X, \eta_X),(Y, \eta_Y)$ which means that they represent the same
point of moduli space of marked $K3$ surfaces.  The same argument applies to the fiber of $\pi$
over $\widetilde{\phi}(l)$.
For such choice of $l$ and $\widetilde{\phi}(l)$ the point $y$ itself is a unique point in the fiber of $\pi_\phi$ over   $(l,\widetilde{\phi}(l))$.

Now choose an arbitrary point $$x=((X, \eta_{X}),(Y, \eta_{Y})) \in \mathcal M^+_\phi.$$ 
We want to connect this arbitrary point to the point $y$ by a twistor path in $\mathcal M^+_\phi$ consisting of lines of the form
(\ref{Twistor-line}) which is going to be constructed as a connected lift of a generic twistor path joining the 
periods $\Pe_\phi(x)$ and $\Pe_\phi(y)$ in $\Omega_\phi$. This would prove the connectedness
of  $\mathcal M^+_\phi$.

Given a generic path of twistor lines $Q_1,\dots,Q_k$, $q_i \in Q_i \cap Q_{i+1}, i=1,\dots, k-1,$
in $\Omega_\phi$ joining $\Pe_\phi(x)$ and $\Pe_\phi(y)$,
there exist lifts of $Q_2, \dots Q_k$ to $\mathfrak M \times \mathfrak M$ via lines of the form (\ref{Twistor-line}), by 
 the discussion after Definition \ref{Twistor-path-definition} and by Remark \ref{Lifting-twisted-remark}.
 The union of their lifts is connected by the above discussed uniqueness of the preimages of $q_1, \dots, q_{k-1}$
under $\Pe_\phi$. We need only to make sure that a lift of $Q_1$ through $x$ exists.
Existence of $Q_1$ with this property 
is provided by Lemma \ref{generifying}. A point $z \in Q_1$ 
with trivial Picard group can be taken to be $q_1$, and then $Q_2,\dots,Q_k$ may be taken to be any generic twistor path joining $z$ to $y$. 
 Then the whole path of twistor lines admits a (connected) lift of the required form to $\mathfrak{M} \times \mathfrak{M}$ through the point $x=((X, \eta_{X}),(Y, \eta_{Y}))$.

Moreover, as
$x$ was chosen to be in $\mathfrak M^{++}$, the whole twistor path will be contained in
$\mathfrak M^{++}$ and thus in ${\mathcal M}_\phi$.
Now using our special choice of $y$ we see that we thus get a path consisting of lines
of the form $$\lambda \mapsto ((X_\lambda, \eta_{X_\lambda}),(Y_\lambda, \eta_{Y_\lambda}))$$ in the self-product $\mathfrak{M}\times \mathfrak{M}$ joining $x$ and $y$ and being a lift of the corresponding twistor path in $\Omega_\phi$ with respect to the obvious projection. 
Moreover, by construction $\psi_\lambda=\eta^{-1}_{Y_\lambda}\phi\eta_{X_\lambda}$
is a rational Hodge isometry. 
Each of the lines of this path lies in $\mathcal M^+_\phi$ by Lemma \ref{Kahler-cones-intersection}.
Now we see that the whole twistor path lifts to $\mathcal M^+_\phi$ and, moreover, it connects $x$ to $y$ because 
it contains a lift of the $\Pe_\phi(y)$ with respect to $\Pe_\phi$ and $y$ was chosen so that it is the only point in the fiber of $\Pe_\phi$
over its period.

Thus we know that we can join any point $x \in \mathcal M^+_\phi$ to $y\in \mathcal M^+_\phi$. 
Finally, we see that we can join any $x,z \in \mathcal M^+_\phi$ by lifts of twistor paths.

The proof of connectedness of ${\mathcal M}^-_{\phi}$ goes analogously.
This completes the proof of Proposition \ref{prop-connected-components}.
\end{proof}

\subsubsection{The sheaf moduli space.}

\label{Sheaf-moduli-space}
\begin{defi} The {\it sheaf moduli space} associated to a signed isometry $\phi \colon \Lambda_\QQ \rightarrow \Lambda_\QQ$
is 
the set $\widetilde{\mathcal M}_\phi$ of quintuples 
$((S_1, \eta_1), (S_2, \eta_2), {\mathcal E})$, where $(S_i, \eta_i), i=1,2,$ are marked  $K3$ surfaces and
${\mathcal E}$ is a twisted  sheaf on $S_1 \times S_2$ satisfying the following conditions 

$\bullet$ $\mathcal E$ is a locally free sheaf;

$\bullet$ the cohomology class $-\kappa(\mathcal E^\vee)\sqrt{td_{S_1\times S_2}}$ determines
a Hodge isometry \\\hspace*{0.6cm} $\psi_{\mathcal E} \colon H^2(S_1,\QQ) \rightarrow H^2(S_2,\QQ),$ such that 
$\psi_{\mathcal E}(K_{S_1})\cap K_{S_2} \neq \emptyset$;

$\bullet$ $\psi_{\mathcal E}=\eta^{-1}_2\phi \eta_1$.
\end{defi}

Note that here we define $\psi_{\mathcal E}$ as the isometry determined by  $-\kappa(\mathcal E^\vee)\sqrt{td_{S_1\times S_2}}$
instead of $\kappa(\mathcal E^\vee)\sqrt{td_{S_1\times S_2}}$ as in Section \ref {A new formulation of Mukai's result}.
The reason for introducing the minus sign is that having in mind our important example of Subsection \ref{An-important-example} we want 
our Hodge isometry to satisfy the second requirement of definition of $\widetilde{\mathcal M}_\phi$,
that is, the K\"ahler cone condition. 
That $-\kappa(\mathcal E^\vee)\sqrt{td_{S_1\times S_2}}$ satisfies this condition will essentially be shown in the proof of Proposition \ref{not-empty}. 

The topological space $\widetilde{\mathcal M}_{\phi}$ has a structure of a  complex-analytic non-Hausdorff manifold.
For our purposes here it is sufficient to consider the sheaf moduli space
set-theoretically.

\begin{nota} The forgetful map $$\widetilde{\Pe} \colon  \widetilde{\mathcal M}_{\phi} \rightarrow {\mathcal M}_{\phi},$$
is the map defined by

$$\widetilde{x}=((S_1, \eta_1), (S_2, \eta_2), {\mathcal E})  \mapsto x=((S_1, \eta_1), (S_2, \eta_2)),$$
and, certainly, $\psi_x=\psi_{\mathcal E}$.

\end{nota}

\begin{rem} 
%
It is clear
that for points $m$ in the image of $\widetilde{\Pe}$ the corresponding $\psi$ is represented by a class of analytic type
(which was defined in the introduction)
in $H^*(X \times Y, \mathbb{Q})$. Now let $X$ and  $Y$ be algebraic $K3$ surfaces. Then $\psi$ is algebraic. So the fact that rational Hodge isometries between algebraic $K3$ surfaces, appearing as a coordinate
in such a quintuple in $\mathcal M_\phi$,  are algebraic would follow if the map $\widetilde{\Pe}$ were surjective.
\end{rem}

 Introduce the following subsets of $\widetilde{\mathcal M}_{\phi}$,
$$\widetilde{\mathcal M}^+_{\phi}=\widetilde{\Pe}^{-1}({\mathcal M}^+_{\phi}),$$
and
$$\widetilde{\mathcal M}^-_{\phi}=\widetilde{\Pe}^{-1}(\mathcal M^-_{\phi}).$$

For examples illustrating the introduced notions we refer
to Subsection \ref{An-important-example}.
According to the general example there it appears  that the rational Hodge  isometry determined by $\kappa(\mathcal E^\vee)\sqrt{td_{S \times M}}$
for $\mathcal E$ a universal sheaf  on $S\times M$ 
are of a very specific type. Via markings of the corresponding surfaces such isometries induce rational isometries 
$\phi \colon  \Lambda_\QQ \rightarrow \Lambda_\QQ$ of {\it cyclic type}, a notion that was introduced in the Introduction. Recall the equivalent
definition of isometries of cyclic type given in Section \ref{Double-orbits}.
We say that a rational isometry $\phi \colon  \Lambda_\QQ \rightarrow \Lambda_\QQ$ is of {\it $n$-cyclic type}
if $\Lambda/(\Lambda \cap \phi^{-1}(\Lambda)) \cong \ZZ/n\ZZ$. 
Isometries of cyclic type are, in some sense, the simplest isometries of $\Lambda_\QQ$.
Moreover, for every such isometry $\phi$ we can find algebraic $K3$ surfaces $S_1, S_2$ and appropriate markings so that $\phi$
determines a rational Hodge isometry of the cohomology lattices of the surfaces and this Hodge isometry is
induced by the $\kappa$-class of a twisted sheaf.
This is proved in the following proposition.

\vspace*{2mm}
\begin{prop} 
\label{not-empty} 

For a signed isometry $\phi$ of cyclic type the moduli space $ \widetilde{\mathcal M}_{\phi}$ is not empty.
\end{prop}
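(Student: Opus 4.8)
The plan is to produce a single quintuple in $\widetilde{\mathcal M}_\phi$ out of the universal-sheaf example of Conclusion~\ref{Existence-of-n-type-isometry-for-any-n}. Suppose $\phi$ is of $n$-cyclic type. First note that $-\phi$ is again of $n$-cyclic type, since $(-\phi)^{-1}(\Lambda)=\phi^{-1}(\Lambda)$ forces $I_{-\phi}=I_\phi$, and that $-\phi$ is \emph{non}-signed because $\phi$ is signed (on a point of $(\Pe,\Pe)^{-1}(\Omega_\phi)\cap(\mathfrak M^{++}\cup\mathfrak M^{--})$ the isometry $\eta_2^{-1}(-\phi)\eta_1=-(\eta_2^{-1}\phi\eta_1)$ sends $C^+$ to $C^-$). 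Now apply Conclusion~\ref{Existence-of-n-type-isometry-for-any-n} to $-\phi$: it yields marked projective $K3$ surfaces $(S_1,\eta_1)$, $(S_2,\eta_2)$ with $Pic(S_1)=\ZZ h$ and $Pic(S_2)=\ZZ\hat h$ cyclic and ample, a locally free universal sheaf $\mathcal E$ on $S_1\times S_2$, and the rational Hodge isometry $\psi_0$ induced by $\kappa(\mathcal E^\vee)\sqrt{td_{S_1\times S_2}}$, with $\psi_0=\eta_2^{-1}(-\phi)\eta_1$. Since $-\kappa(\mathcal E^\vee)\sqrt{td_{S_1\times S_2}}$ induces $-\psi_0$, the candidate quintuple is $((S_1,\eta_1),(S_2,\eta_2),\mathcal E)$, with associated isometry $\psi_{\mathcal E}=-\psi_0=\eta_2^{-1}\phi\eta_1$.

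It remains to verify the three defining conditions of $\widetilde{\mathcal M}_\phi$. Local freeness of $\mathcal E$ is given by Conclusion~\ref{Existence-of-n-type-isometry-for-any-n}. That $-\kappa(\mathcal E^\vee)\sqrt{td_{S_1\times S_2}}$ determines a Hodge isometry $H^2(S_1,\QQ)\to H^2(S_2,\QQ)$ follows from Lemma~\ref{Reformulation} applied to the universal sheaf $\mathcal E$ (negating the class merely negates the isometry), and this isometry equals $\eta_2^{-1}\phi\eta_1$ by construction. The one substantive point is the K\"ahler-cone condition $\psi_{\mathcal E}(K_{S_1})\cap K_{S_2}\neq\emptyset$.

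For this I use two facts. Since $Pic(S_i)$ is generated by an ample class, $S_i$ has no $(-2)$-curves (a $(-2)$-class would be an integral multiple of $h$, resp.\ $\hat h$, which has positive square), so the K\"ahler cone of $S_i$ equals its positive cone $C_{S_i}^+$; this reduces the condition to showing that $\psi_{\mathcal E}$ maps $C_{S_1}^+$ onto $C_{S_2}^+$, i.e.\ that $\psi_{\mathcal E}$ is a \emph{signed} Hodge isometry. The point is then that the universal-sheaf isometry $\psi_0$ is \emph{non}-signed, so $\psi_{\mathcal E}=-\psi_0$ is signed. One way to see this: the Fourier--Mukai functor $\Phi^{S_1\to S_2}_{\mathcal E^\vee}$ is an equivalence (as recalled in the proof of Lemma~\ref{Reformulation}), hence the induced isometry $\Theta_0$ of the full Mukai lattices preserves the natural orientation of the positive four-space (Szendr\H{o}i's conjecture, established by Huybrechts--Macr\`i--Stellari); since the computation in the proof of Lemma~\ref{Reformulation} shows $\Theta_0(0,0,1)=(n,0,0)$ and $\Theta_0(1,0,0)=(0,0,1/n)$, i.e.\ $\Theta_0$ interchanges the two rays of the positive cone of the hyperbolic plane $H^0\oplus H^4$, and since $\Theta_0$ preserves the orientation of the $\sigma$-plane, orientation-preservation on the four-space forces $\Theta_0$ to reverse the orientation of the positive three-space $H^2$, i.e.\ $\psi_0$ is non-signed. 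Alternatively the sign can be pinned down by computing $\psi_{\mathcal E}(h)$ directly from $\kappa_2(\mathcal E^\vee)=ch_2(\mathcal E^\vee)-c_1(\mathcal E^\vee)^2/2n$ as in Subsection~\ref{An-important-example} and checking it is an \emph{anti}-ample multiple of $\hat h$. Either way $\psi_{\mathcal E}$ maps $K_{S_1}=C_{S_1}^+$ onto $K_{S_2}=C_{S_2}^+$, so $\psi_{\mathcal E}(K_{S_1})\cap K_{S_2}=K_{S_2}\neq\emptyset$ and $((S_1,\eta_1),(S_2,\eta_2),\mathcal E)\in\widetilde{\mathcal M}_\phi$.

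The main obstacle is precisely this sign/orientation bookkeeping: it is essential that the universal sheaf produces an isometry of the ``wrong'' sign, which is exactly why $\widetilde{\mathcal M}_\phi$ was defined using $-\kappa(\mathcal E^\vee)$ rather than $\kappa(\mathcal E^\vee)$; carrying this out cleanly --- and correctly tracking how signedness of $\phi$, signedness of $\psi_0$, and the choice of component $\mathfrak M^{++}$ versus $\mathfrak M^{--}$ interact through the markings supplied by Conclusion~\ref{Existence-of-n-type-isometry-for-any-n} --- is the only delicate step. Everything else is a direct appeal to Conclusion~\ref{Existence-of-n-type-isometry-for-any-n} and Lemma~\ref{Reformulation}.
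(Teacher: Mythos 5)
Your argument is correct and follows the same skeleton as the paper's proof (take the universal-sheaf example supplied by Conclusion \ref{Existence-of-n-type-isometry-for-any-n}, note that local freeness and the Hodge-isometry property come from that Conclusion and Lemma \ref{Reformulation}, and reduce everything to the K\"ahler-cone condition), and your explicit bookkeeping of applying the Conclusion to $-\phi$ so that $\psi_{\mathcal E}=-\psi_0=\eta_2^{-1}\phi\eta_1$ is if anything slightly more careful than the paper, which leaves that adjustment implicit. The genuine difference is how the crucial sign is settled. The paper computes $\psi_{\mathcal E}(h)$ directly: it expands the degree-$4$ component of $-\kappa(\mathcal E^\vee)\sqrt{td_{S\times M}}$ and uses Mukai's formula for the image of $h$ under $c_2(\mathcal E)$, namely $[1+2(n-1)sj]\hat h$ from \cite[Prop. 1.1]{Mukai2}, to get $\psi_{\mathcal E}(h)=\hat h$ exactly, which gives the K\"ahler-cone condition on the nose. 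You instead observe that, since $Pic(S_i)$ is cyclic generated by an ample class, there are no $(-2)$-curves and $K_{S_i}=C^+_{S_i}$, so the condition is equivalent to signedness, and you force the sign via the orientation-preservation theorem for derived equivalences of projective $K3$ surfaces (Huybrechts--Macr\`i--Stellari) combined with the images $\psi_{\mathcal E}(0,0,1)=(n,0,0)$, $\psi_{\mathcal E}(1,0,0)=(0,0,1/n)$ already computed in Lemma \ref{Reformulation}. That route is more conceptual and would apply to any universal-sheaf situation without Mukai's normalization, but it imports a deep external theorem nowhere used in the paper and requires a couple of small checks you only gesture at: that the B-field factors $e^\alpha$, $e^\beta$ preserve the orientation of the positive four-space, and that the orientation comparison really factors through the preserved decomposition $H^2\oplus(H^0\oplus H^4)$ so that the ``ray swap'' in the hyperbolic summand forces the $H^2$-part to be non-signed; with the standard orientation conventions this does come out as you claim (and is confirmed by the paper's explicit answer $\psi_0(h)=-\hat h$). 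Finally, note that your ``alternative'' argument is essentially the paper's proof but, as stated, is incomplete: the computation of Subsection \ref{An-important-example} only identifies the sublattice $I_{\psi_{\mathcal E}}$ and says nothing about the sign of $\psi_{\mathcal E}(h)$; pinning that down requires precisely the input from \cite[Prop. 1.1]{Mukai2} that the paper invokes.
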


\begin{proof}
Let $n$ be the natural number such that $\phi$ is of $n$-cyclic type.
The proof of the proposition is essentially given by
Conclusion \ref{Existence-of-n-type-isometry-for-any-n}, which provides
existence of  marked $K3$ surfaces $(S,\eta_S)$, $(M,\eta_M)$ and a 
locally free sheaf $\mathcal E$ over $S\times M$, such that
$\kappa(\mathcal E)\sqrt{td_{S \times M}}$ induces an $n$-cyclic type isometry $\psi \colon  H^2(S,\QQ) \rightarrow H^2(M,\QQ)$.
The only thing that needs to be verified is that $\psi_{\mathcal E}=-\psi$ satisfies the K\"ahler cone condition
in the definition of $\widetilde{\mathcal M}_\phi$.

As $\hat{h}^2=h^2$ and $\psi_{\mathcal E}$ restricts to an isomorphism between $\QQ h$ and $\QQ\hat{h}$,
we certainly have that $\psi_{\mathcal E}(h)=\pm \hat{h}$.
Let us show that $\psi_{\mathcal E}(h)=\hat{h}$.
The isometry $\psi_{\mathcal E}$ is induced by the $H^4(S\times M,\QQ)$-component of $-\kappa(\mathcal E^\vee)\sqrt{td_{S\times M}}$. We have 
$$\kappa(\mathcal E^\vee)\sqrt{td_{S \times M}}=
ch(\mathcal E^\vee)e^{-\frac{c_1(\mathcal E^\vee)}{n}}\sqrt{td_{S \times M}}=$$ 
$$=\left (n+c_1(\mathcal E^\vee)+\frac{c_1(\mathcal E^\vee)^2-2c_2(\mathcal E^\vee)}{2}+\dots \right) \cdot \left (1-\frac{c_1(\mathcal E^\vee)}{n}+\frac{c_1(\mathcal E^\vee )^2}{2n^2}-\dots \right)\times$$ 
$$\times(1+\pi^*_S{\bf 1}_S+\pi^*_M{\bf 1}_M+\pi^*_S{\bf 1}_S\cdot\pi^*_M{\bf 1}_M)=$$
$$=n+\left (\frac{(n-1)c_1(\mathcal E^\vee)^2}{2n}-c_2(\mathcal E^\vee)+\pi^*_S{\bf 1}_S+\pi^*_M{\bf 1}_M\right)+\dots .$$
As $\pi^*_S{\bf 1}_S$ and $\pi^*_M{\bf 1}_M$ induce zero maps $H^2(S,\ZZ) \rightarrow H^2(M,\ZZ)$ we need to consider only the term
$$\frac{(n-1)c_1(\mathcal E^\vee)^2}{2n}-c_2(\mathcal E^\vee).$$
In $$c_1(\mathcal E^\vee)^2=(-\pi^*_Sh-j\pi^*_M\hat{h})^2=\pi^*_S(2ns {\bf 1}_S)+\pi^*_M(2nsj^2 {\bf 1}_M)+2\pi^*_Sh\cdot \pi^*_Mj\hat{h}$$
the only component inducing a nonzero map is $2\pi^*_Sh\cdot \pi^*_Mj\hat{h}$.
Denote the image of $h$ in $H^2(M,\ZZ)$ under $c_2(\mathcal E^\vee)=c_2(\mathcal E)$ by $\psi$, following the notations of 
\cite{Mukai2}.  As Mukai showed in \cite[Prop. 1.1]{Mukai2},  $\psi= [1+(2(n-1)sj)]\hat{h}$. The image of $h$ under $2\pi^*_Sh\cdot \pi^*_Mj\hat{h}$ is $4nsj\hat{h}$.
Finally we get that $\psi_{\mathcal E}$ maps $h$ to 
$${\pi_M}_*\left (\pi^*_Sh\cdot \left (c_2(\mathcal E^\vee)-\frac{(n-1)c_1(\mathcal E^\vee)^2}{2n}\right)\right )=\psi-\frac{4(n-1)nsj\hat{h}}{2n}=$$ $$=\psi-2(n-1)sj\hat{h}=\hat{h}.$$
So the K\"ahler cone condition for $\psi_{\mathcal E}$ is verified and the proof of the proposition is complete.
\end{proof}

\section{Hyperholomorphic sheaves}
\label{Hyperholomorphic-sheaves}
In this section we formulate some results on extending sheaves over
twistor families. These results will be used in the next section.
The original definition of a hyperholomorphic vector bundle can be found in  \cite[Def. 3.11]{Verb-book}.
Here we give a more convenient for us definition which is equivalent to the original one.

Assume $N$ is a hyperk\"ahler manifold, $I$ is a fixed complex structure on $N$ and $h$ is a K\"ahler class on $N$. There exists a hyperk\"ahler metric
associated to a (1,1)-form representing the cohomology class  $h$. 
Then we have a sphere of complex structures on $N$, a twistor family $\mathcal N \rightarrow S^2$, and a K\"ahler
class $\omega_\lambda$ on the fiber $N_\lambda$ for each $\lambda \in S^2$, such that $N_I=N$ and $\omega_I=h$. 
\begin{rem}
\label {A-convenient-remark}
By definition, the class $\omega_\lambda$ (considered up to multiplying by a positive scalar) is {\it the K\"ahler class} on $(N,\lambda)$.
\end{rem}
 Fix these $h, S^2$ and $\mathcal N$.
A vector bundle $F$ on $(N,I)$ is called {\it h-hyperholomorphic}
if it can be extended to a vector bundle over $\mathcal N$.
The K\"ahler class $h$ defines the notion of $h$-slope-stability and $h$-slope-polystability of vector bundles on $N$. Recall that a vector bundle on $N$ is called $h$-{\it slope-polystable} if it is
isomorphic to a direct sum of $h$-slope-stable bundles with equal slopes, see an equivalent definition in
\cite[Def. 1.5.4 ]{Huybrechts-Lehn}. 

Let us formulate 
a theorem proved by M. Verbitsky (see \cite[Thm. 3.17, Thm. 3.19]{Verb-book}, keeping in mind
Remark \ref{A-convenient-remark}).

\begin{thm} {\it Let $F$ be an $h$-slope-polystable vector bundle over $(N,I)$.
If the Hodge types of $c_1(F)$ and $c_2(F)$ are preserved under the deformation
identified with the sphere $S^2$ of complex structures on $N$, then the bundle $F$ 
extends over $\mathcal N$.
Furthermore, the extended vector bundle $\mathcal F$ over $\mathcal N$ restricts to the fiber $N_\lambda$ as an 
$\omega_\lambda$-slope-polystable bundle, for each $\lambda \in S^2$.}
\label {Hyper-theorem}
\end{thm}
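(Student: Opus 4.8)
The plan is to equip the polystable bundle $F$ with its Hermite--Einstein connection, to prove that this connection is \emph{hyperholomorphic} in Verbitsky's sense --- its curvature of Hodge type $(1,1)$ with respect to \emph{every} complex structure $\lambda\in S^2$ --- and then to apply the twistor transform, which glues the holomorphic structures so obtained on $F$ along $S^2\cong\PP^1$ into one holomorphic bundle $\mathcal F$ over $\mathcal N$.

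Since $(N,I)$ is compact K\"ahler with K\"ahler class $h$, the Uhlenbeck--Yau theorem endows $F$ with a Hermite--Einstein metric; let $\nabla$ be its Chern connection and $\Theta\in\Omega^{1,1}_I(\mathrm{End}\,F)$ its curvature, so that $\Lambda_{\omega_I}\Theta=c\cdot\mathrm{Id}_F$ where $\Lambda_{\omega_I}$ is contraction by the K\"ahler form $\omega_I$ of the hyperk\"ahler metric in the complex structure $I$. The action of $SU(2)=Sp(1)$ on real $2$-forms splits $\Omega^2(N)_\RR$ into its invariant part (the forms simultaneously of type $(1,1)$ for all $\lambda\in S^2$) and a complementary ``weight-two'' isotypic summand; write $\Theta=\Theta_{\mathrm{inv}}+\Theta_0$ for the induced splitting of the curvature. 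The heart of the argument is to show $\Theta_0=0$.

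This is where the topological hypothesis enters, and I expect it to be the main obstacle. One invokes Verbitsky's characteristic-class estimate: for a Hermite--Einstein connection on a compact hyperk\"ahler manifold the integral $\int_N\mathrm{tr}(\Theta\wedge\Theta)\wedge\omega_I^{n-2}$, with $n=\dim_\CC N$, can be computed both from topology --- it is a universal combination of the $\omega_I$-pairings of $c_1(F)^2$ and $c_2(F)$ --- and from the $SU(2)$-decomposition of $\Theta$, the non-invariant part $\Theta_0$ contributing a term of a definite sign by the Hodge--Riemann bilinear relations applied to the two isotypic components of primitive $(1,1)$-forms. Comparing the two computations yields an inequality for $\int(\tfrac12 c_1(F)^2-c_2(F))\wedge h^{n-2}$ which is saturated exactly when $\Theta$ is $SU(2)$-invariant; the hypothesis that $c_1(F)$ stays of type $(1,1)$ and $c_2(F)$ of type $(2,2)$ over $S^2$ --- equivalently, that the relevant characteristic data are $SU(2)$-invariant, as one sees by running the same estimate on a fiber $(N_\lambda,\omega_\lambda)$ and comparing --- forces the equality case, hence $\Theta_0=0$. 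Therefore $\Theta$ is $SU(2)$-invariant and, in particular, of type $(1,1)$ with respect to every $\lambda\in S^2$: the connection $\nabla$ is hyperholomorphic.

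It remains to run the twistor transform. For each $\lambda\in S^2$ the operator $\bar\partial_\lambda:=\nabla^{0,1}_\lambda$ satisfies $\bar\partial_\lambda^2=\Theta^{0,2}_\lambda=0$, hence defines a holomorphic structure $F_\lambda$ on the underlying $C^\infty$ bundle over $(N,\lambda)$, with $F_I=F$. Because the integrable almost complex structure on the twistor space $\mathcal N\to S^2\cong\PP^1$ is built from $\lambda$ in the fiber direction and from that of $\PP^1$ in the base direction, these operators assemble into a single integrable $\bar\partial$-operator on the pull-back of $F$ to the total space of $\mathcal N$, producing a holomorphic vector bundle $\mathcal F$ on $\mathcal N$ --- locally free, being a holomorphic structure on a $C^\infty$ vector bundle --- with $\mathcal F|_{N_\lambda}\cong F_\lambda$ and $\mathcal F|_{N_I}\cong F$; this is the required extension. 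Finally, a hyperholomorphic connection restricts on each fiber $(N_\lambda,\omega_\lambda)$ to a connection whose curvature again has central $\omega_\lambda$-contraction, i.e.\ is Hermite--Einstein there; so the easy (Kobayashi) direction of the Kobayashi--Hitchin correspondence makes $\mathcal F|_{N_\lambda}$ an $\omega_\lambda$-slope-polystable bundle for every $\lambda\in S^2$.
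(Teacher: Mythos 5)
Your sketch is correct in outline, but note that the paper itself offers no proof of this statement: it is quoted verbatim from Verbitsky (the paper cites \cite[Thm. 3.17, Thm. 3.19]{Verb-book}), so the only comparison available is with Verbitsky's original argument, which is exactly what you have reproduced --- the Uhlenbeck--Yau Hermite--Einstein connection, the $SU(2)$-decomposition of its curvature with the non-invariant part killed by the equality case of the characteristic-class estimate (made possible by the $SU(2)$-invariance, i.e.\ Hodge-type preservation, of $c_1$ and $c_2$), and the twistor transform assembling the $\bar\partial_\lambda$-operators into a holomorphic bundle on $\mathcal N$ whose fiberwise restrictions are Hermite--Einstein, hence $\omega_\lambda$-slope-polystable. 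So your proposal takes essentially the same route as the cited source, with the only unverified bookkeeping being the precise form of the curvature integral and the reduction of the polystable case to its stable summands, both of which are exactly where Verbitsky's detailed computations come in.
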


\begin{rem}
\label{deformation-invariance}
Let $\pi \colon  \mathcal X \rightarrow T$ be a smooth and proper holomorphic map of complex manifolds with
a continuous trivialization $\eta \colon  R^k\pi_*\ZZ \rightarrow (\Lambda)_T$ of the local system, where 
$(\Lambda)_T$ is the trivial local system with fiber $\Lambda$.
Let $\mathcal F$ be a vector bundle over $\mathcal X$.
Each Chern class of $\mathcal F$ defines a flat section of the local system.
Deformations of the Chern classes of the vector bundle  $F=\mathcal F_0$ on $X_0$ agree with taking Chern classes of the extension, that is, $\eta^{-1}_t\eta_0(c_i(\mathcal F_0)) =c_i(\mathcal F_t) \in H^*(X_t,\CC)$.
\end{rem}
%
%
We will be particularly interested in the case when $\mathcal X$ is a family of products of
marked $K3$ surfaces, $T=Q_{\psi, h}\subset \Omega_\phi$.

\begin{rem}
\label{remark-hyperkahler-twistor}
 Theorem \ref{Hyper-theorem} addresses existence of  extensions of 
sheaves on $N$ over a family over the sphere $S^2$ of complex structures on $N$. 
In Section \ref{Proof-of-the-main-result} we will need to extend sheaves over families of products of $K3$'s over twistor lines in $\Omega_\phi$. 
In order to use Theorem  \ref{Hyper-theorem} we need to associate to a twistor line $Q_{\psi,h}$ an appropriate sphere of complex structures $S^2$ on $N$.
While such association trivially exists in the case when $N$ is a $K3$ surface, we need to be careful
when $N$ is a product of $K3$'s. Lemma \ref{lemma-hyperkahler-twistor} provides a way to associate a hyperk\"ahler structure to a twistor line.
\end{rem}

We are now going to make some preparations needed for the proof of Proposition \ref{Surjectivity-of-p}
 and related to the problem of extending sheaves over twistor families. 
Let $\mathcal N,\mathcal F$ and $h$ be as in Theorem \ref{Hyper-theorem} for the rest of this section.
Consider now a hyperk\"ahler manifold $N=S\times M$  
where $(S,\eta_S)$ and $(M,\eta_M), M=M_{h_S}(v)$, $h_S$ is an ample divisor  on $S$,  
are marked projective $K3$ surfaces and $\mathcal E$ is a universal untwisted sheaf on $S \times M$.
We will specify the precise choice of $S,M$ and $\mathcal E$ that is used in the proof of Proposition \ref{Surjectivity-of-p}
in the formulation of Lemma \ref{stability} below.
Ideally we would like to be able to extend $\mathcal E$ over a twistor family through $((S, \eta_S), (M, \eta_M), \psi_{\mathcal E})$ determined by $H:=(h_S,\psi_{\mathcal E}(h_S))$ which corresponds to a
twistor line in $\mathcal M_{\phi}$ where 
$$\phi=\eta_M \psi_{\mathcal E}\eta^{-1}_S  \colon  \Lambda_\QQ \rightarrow \Lambda_\QQ,$$
$$ \phi \in \Lambda^*_\QQ \otimes \Lambda_\QQ.$$  
The Hodge types of $c_1(\mathcal E)$ and $c_2(\mathcal E)$ need not be preserved under such twistor deformation and thus we cannot, in general, hope for extending
$\mathcal E$ as an untwisted sheaf. 
%
The preservation of the Hodge type of $\psi_{\mathcal E}$ 
along $\mathcal M_\phi$ 
gives the first step towards understanding what should be our extendable substitute for $\mathcal E$.
Consider a point $((S, \eta_S), (M, \eta_M), \psi_{\mathcal E}) \in \mathcal M_\phi$
of the kind constructed in Proposition \ref{not-empty}.
The isometry $\psi_\mathcal{E} \colon H^2(S, \QQ) \rightarrow H^2(M,\QQ)$ is actually induced by the degree 4 component of the class $\kappa(\mathcal E^\vee)\sqrt{td_{S\times M}}$ as we saw in Subsection \ref{An-important-example}. 
Moreover, as the Hodge types of Todd classes of $S$ and $M$
are trivially preserved under any deformations,
the preservation of the Hodge type of $\psi_{\mathcal E}$ 
implies preservation of the Hodge type of $\kappa_2(\mathcal E^\vee)$.
And now we describe the last step towards finding an appropriate substitute for $\mathcal E$.
Consider the sheaf $\mathcal A = \mathcal End(\mathcal E) = \mathcal E^\vee \otimes \mathcal E$ 
instead of $\mathcal E$. We have $$c_1(\mathcal A)=0$$ and $$c_2(\mathcal A) = -2r \kappa (\mathcal E),$$ 
where $r= rk(\mathcal E)$.
Let us explain the last equality.
 Recall that $$\kappa(\mathcal A)=ch(\mathcal A)exp(-c_1(\mathcal A)/r^2)=ch(\mathcal A),$$ see Subsection \ref{A new formulation of Mukai's result}.
Now, comparing the components of $\kappa(\mathcal A)$ and $ch(\mathcal A)$ of degree 4 we get $$c_2(\mathcal A)=-ch_2(\mathcal A)=-\kappa_2(\mathcal A)
=-\kappa_2(\mathcal E^\vee \otimes  \mathcal E)=-2r\kappa_2(\mathcal E).$$ 
In the last equality
we used the multiplicativity of the class $\kappa$ on tensor products, the fact that $\kappa_1$ of any sheaf
is zero and $\kappa_{2i} (\mathcal E) = \kappa_{2i}(\mathcal E^\vee)$ for any $i \geqslant 0$.
So the class $-2r\kappa(\mathcal E)$ naturally serves as the second Chern class for the sheaf of  Azumaya algebras $\mathcal A$. The Hodge type of the class $\kappa(\mathcal E)$ is preserved under the considered twistor deformations together with the Hodge type of $\kappa(\mathcal E^\vee)$,
and thus  the Hodge type of $c_2(\mathcal A)$ is preserved under the considered twistor deformations.
Thus the condition of Hodge type invariance in Theorem \ref{Hyper-theorem} for 
$c_1(\mathcal A)=0$ and $c_2(\mathcal A)$ is satisfied.

The only condition that remains to check is that $\mathcal A$ is $H$-slope-polystable.
Note that in order to prove
that $\mathcal A$  is $H$-slope-polystable for which it suffices to show that $\mathcal E$ is.
Indeed, then by \cite[Thm. 3.2.11]{Huybrechts-Lehn} we would have that $\mathcal A=\mathcal E^\vee \otimes \mathcal E$
is $H$-slope-polystable and we may thus set $\mathcal F:=\mathcal A$.
Thus by Theorem \ref{Hyper-theorem} the sheaf $\mathcal A$ can be deformed as an untwisted sheaf. 
\begin{rem}
\label{preservation-of-algebra-structure}
More precisely,
this twistor deformation preserves the structure of Azumaya algebra, see Section 6 of \cite{M}. 
The explanation is based on the fact that for any $h$-hyperholomorphic $h$-polystable sheaf of zero slope, 
any global section is indeed flat with respect to the Hermite-Einstein
connection determined by $h$, and is thus holomorphic with respect to all complex structures $\lambda$
in the twistor family. 
Now, the multiplication $m \colon \mathcal End \,\mathcal E \otimes \mathcal End \,\mathcal E \rightarrow
\mathcal End \,\mathcal E$ determined by the composition is a global section of the sheaf 
$\mathcal Hom(\mathcal End \,\mathcal E \otimes \mathcal End \,\mathcal E, \mathcal End \,\mathcal E)$
which is $h$-polystable again by \cite[Thm. 3.2.11]{Huybrechts-Lehn}, 
$h$-hyperholomorphic, because so is $\mathcal End \,\mathcal E$, and of zero slope. Thus $m$ is holomorphic with respect to all 
twistor deformations determined by $h$.
 
\end{rem}
As we know from the correspondence between sheaves of Azumaya algebras and twisted sheaves (see, for example,  \cite{Cald}) deforming 
the sheaf  $\mathcal A$ as a sheaf of Azumaya algebras is the same
as deforming the sheaf $\mathcal E$ as a twisted sheaf. So Remark \ref{preservation-of-algebra-structure}
tells us that the twistor deformation of $\mathcal A$ provided by
Theorem \ref{Hyper-theorem} determines a deformation of $\mathcal E$ as a twisted sheaf.

Let us get to the problem of proving the stability of $\mathcal E$.
In general the universal sheaf $\mathcal E$ need not be slope-stable with respect to
an arbitrary divisor $H$ but there exists a particular choice of $S,M,\mathcal E$ and $H$
such that $\mathcal E$ is $H$-slope-stable.

\begin{lem}
\label {stability}
Consider $K3$ surfaces $S$, $M$, ample divisors $h_S \in Pic (S)$, $h_M \in Pic(M)$, and the universal sheaf $\mathcal E$ on $S\times M$ as in the proof of Proposition \ref{not-empty}.
Then   $\mathcal E$ is slope-stable with respect to $\pi^*_Sh_S+\pi^*_Mh_M \in Pic(S\times M)$.
Consequently, the Azumaya algebra $\mathcal A :=\mathcal End(\mathcal E)$ is $(\pi_S^*h_S+\pi_M^*h_M)$-hyperholomorphic.
\end{lem}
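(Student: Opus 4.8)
The statement has two parts: first, that the normalized universal sheaf $\mathcal E$ on $S\times M$ constructed in the proof of Proposition \ref{not-empty} is slope-stable with respect to the ample class $H=\pi_S^*h_S+\pi_M^*h_M$; and second, that consequently the Azumaya algebra $\mathcal A=\mathcal End(\mathcal E)$ is $H$-hyperholomorphic. The second part is essentially already reduced to the first in the discussion immediately preceding the lemma: once $\mathcal E$ is $H$-slope-stable, \cite[Thm. 3.2.11]{Huybrechts-Lehn} gives that $\mathcal A=\mathcal E^\vee\otimes\mathcal E$ is $H$-slope-polystable, the Hodge types of $c_1(\mathcal A)=0$ and $c_2(\mathcal A)=-2r\kappa_2(\mathcal E)$ are preserved along the relevant twistor deformation (shown above using preservation of the Hodge type of $\psi_{\mathcal E}$ and hence of $\kappa_2(\mathcal E^\vee)$), so Theorem \ref{Hyper-theorem} applies and $\mathcal A$ extends over the twistor family, i.e. is $H$-hyperholomorphic. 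So the real content is the slope-stability of $\mathcal E$.

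For the first part, the plan is to exploit the very restrictive geometry of the surfaces chosen in the proof of Proposition \ref{not-empty}: $S$ is a \emph{general} degree-$2sn$ polarized $K3$ with $\mathrm{Pic}(S)=\ZZ h_S$ of rank one, $M=M_{h_S}(v)$ with $v=(n,h_S,s)$ is again a $K3$ with $\mathrm{Pic}(M)=\ZZ h_M$ of rank one, and $\mathcal E$ is the normalized universal bundle with $c_1(\mathcal E)=\pi_S^*h_S+k\pi_M^*h_M$. First I would note that for $m\in M$ the restriction $\mathcal E|_{S\times\{m\}}$ is the $h_S$-slope-stable bundle $E_m$ parametrized by $m$, and dually $\mathcal E|_{\{s\}\times M}$ is, up to the normalization twist, the bundle parametrized by the point $s\in S=M_{h_M}(v^\vee)$ under Mukai's duality $S\cong M_{h_M}(v^\vee)$; both restrictions are therefore slope-stable on the respective factors. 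The standard mechanism (cf. \cite[Lemma 3.2.3]{Huybrechts-Lehn}, or the argument in \cite{Mukai2}) is: if a coherent subsheaf $\mathcal F\subset\mathcal E$ destabilizes with respect to $H=\pi_S^*h_S+\pi_M^*h_M$, then for $h_M$ chosen in a suitable chamber (one may even take a multiple of $h_M$, or perturb) the restriction of $\mathcal F$ to a general fiber $S\times\{m\}$ is a subsheaf of the stable bundle $E_m$, which bounds its rank and slope, and symmetrically on the other factor; combining the two fiberwise estimates with the Picard-rank-one hypothesis on both factors forces $\mathcal F$ to be proportional to $\mathcal E$, a contradiction. Concretely I would argue: $c_1(\mathcal F)=a\pi_S^*h_S+b\pi_M^*h_M$ for integers $a,b$ with $0<\mathrm{rk}(\mathcal F)<r=n$, the fiberwise stability on $S\times\{m\}$ gives $a/\mathrm{rk}(\mathcal F)<1/n$ (slope of $E_m$), the fiberwise stability on $\{s\}\times M$ gives $b/\mathrm{rk}(\mathcal F)<k/n$, and then the $H$-slope inequality $\mu_H(\mathcal F)<\mu_H(\mathcal E)$, which reads $(a h_S^2 h_M\cdot c + b h_S h_M^2\cdot c')/\mathrm{rk}(\mathcal F)$ against the same for $\mathcal E$, follows from these two since all intersection numbers $h_S^2\cdot h_M$ etc. are positive — so that no perturbation of $H$ is even needed, the inequality holds for the stated $H$ on the nose.

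**The main obstacle.**
The delicate point is controlling the behavior of a hypothetical destabilizing subsheaf $\mathcal F$ on a \emph{general} fiber and making the passage from "$\mathcal F|_{S\times\{m\}}\subset E_m$ for general $m$" rigorous — a destabilizing subsheaf need not restrict to a subsheaf of the same rank on every fiber, and torsion phenomena on special fibers must be excluded, which is where one invokes that $\mathcal E$ is locally free (Mukai, \cite[Thm. 1.1, Thm. 1.2]{Mukai2}) and that the restriction to a general fiber of the torsion-free quotient $\mathcal E/\mathcal F$ is again torsion-free. I expect the cleanest route is to cite the relative stability result of \cite[Sec. 3.2]{Huybrechts-Lehn} (Gieseker/slope stability in families, and the theorem that tensoring stable by stable stays polystable) rather than redo the fiberwise analysis by hand, since the rank-one Picard groups on \emph{both} $S$ and $M$ collapse the Néron--Severi lattice of $S\times M$ to rank two and leave essentially no room for destabilizing classes. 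So the proof of the lemma should read: recall $S,M,h_S,h_M,\mathcal E$ from Proposition \ref{not-empty}; observe $\mathrm{Pic}(S\times M)=\ZZ\pi_S^*h_S\oplus\ZZ\pi_M^*h_M$; use the fiberwise stability of $\mathcal E$ over each factor together with this to rule out destabilizing subsheaves for $H=\pi_S^*h_S+\pi_M^*h_M$; conclude stability; then invoke \cite[Thm. 3.2.11]{Huybrechts-Lehn}, the Hodge-type preservation established above, and Theorem \ref{Hyper-theorem} to deduce that $\mathcal A=\mathcal End(\mathcal E)$ is $H$-hyperholomorphic.
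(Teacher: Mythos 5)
Your proposal is correct and follows essentially the same route as the paper: the paper's proof likewise reduces everything to the $h_S$-slope-stability of $\mathcal E|_{S\times\{m\}}$ (from the definition of $M$) and the $h_M$-slope-stability of $\mathcal E|_{\{s\}\times M}$ (from Mukai \cite[Thm. 1.2]{Mukai2}), combining the two fiberwise slope inequalities with positive coefficients for $H=\pi_S^*h_S+\pi_M^*h_M$, and then deduces hyperholomorphicity of $\mathcal End(\mathcal E)$ exactly as in the discussion preceding the lemma via \cite[Thm. 3.2.11]{Huybrechts-Lehn} and Theorem \ref{Hyper-theorem}. Your extra use of the rank-one Picard groups is harmless but not needed for the paper's argument.
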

\begin{proof}

The slope-stability of $\mathcal E$ with respect to the divisor $\pi^*_Sh_S + \pi^*_Mh_M$ in  $Pic (S \times M)$ 
follows from the slope-stability of ${\mathcal E}|_{S \times \{m\}}, m \in M,$ with respect to $h_S \in Pic(S)$
(which follows from the definition of $M$ and $\mathcal E$), and the slope-stability of ${\mathcal E}|_{\{s\}\times M}, s\in S,$
with respect to $h_M \in Pic (M)$ (which follows from Mukai's paper \cite[Thm. 1.2]{Mukai2},
where in his  notation $h$ corresponds to our $h_S$ and $\hat{h}$ corresponds to our $h_M$). 
\end{proof}

\section{Proof of the main result}
\label{Proof-of-the-main-result}
The core result of this section is Proposition \ref{Surjectivity-prop}, which states that for each of the subsets  $\widetilde{\mathcal M}^\pm_{\phi} \subset \widetilde{\mathcal M}_{\phi}$, $\phi$ an isometry of cyclic type,
the restriction $\widetilde{\Pe} \colon  \widetilde{\mathcal M}^{\pm}_\phi \rightarrow \mathcal M^{\pm}_\phi$ sending $(S_1,S_2,\mathcal E)$ to $(S_1,S_2, -\kappa(\mathcal E^\vee)\sqrt{td_{S_1\times S_2}})$ is surjective. 
The proof of Proposition \ref{Surjectivity-prop} is given in Subsection \ref{Surjectivity-of-p}.
Subsection \ref{Reduction-to-the-case-of-a-cyclic-isometry} explains why Theorem \ref{Main-Theorem-Algebraic} formulated in the introduction follows from
Proposition \ref{Surjectivity-prop}. 
\subsection {Surjectivity of $\widetilde{\Pe}$}

\label{Surjectivity-of-p}
\vspace*{2mm}

In this subsection the isometry $\phi$ is assumed to be of cyclic type.
Let ${\mathcal M}_\phi^+$ and ${\mathcal M}_\phi^-$ be the connected components of ${\mathcal M}_\phi$ introduced in Proposition \ref{prop-connected-components} and $\widetilde{\mathcal M}^\pm_\phi$ the sets defined in 
Subsection \ref{Sheaf-moduli-space}.
The following proposition is proved
using the technique of hyperholomorphic sheaves developed by M. Verbitsky.

\begin{prop} 
\label{Surjectivity-prop}
The forgetful map $\widetilde{\Pe}  \colon  \widetilde{\mathcal M}_{\phi} \rightarrow {\mathcal M}_{\phi}$ 
restricts to a surjective map $\widetilde{\Pe}  \colon  \widetilde{\mathcal M}^{\pm}_{\phi} \rightarrow {\mathcal M}^\pm_{\phi}$.
\end{prop}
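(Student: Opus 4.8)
The plan is to exploit the connectedness of $\mathcal{M}^\pm_\phi$ (Proposition \ref{prop-connected-components}) together with Verbitsky's extension theorem for hyperholomorphic sheaves (Theorem \ref{Hyper-theorem}) to propagate the existence of the twisted sheaf $\mathcal{E}$ from one base point to every point of the connected component. First I would fix a connected component, say $\mathcal{M}^+_\phi$, and use Proposition \ref{not-empty} together with the explicit construction in Conclusion \ref{Existence-of-n-type-isometry-for-any-n} and Proposition \ref{not-empty}'s proof to produce a base point $\widetilde{x}_0=((S,\eta_S),(M,\eta_M),\mathcal{E}_0)\in\widetilde{\mathcal{M}}^+_\phi$ lying over a point $x_0=((S,\eta_S),(M,\eta_M))\in\mathcal{M}^+_\phi$, where $S,M,\mathcal{E}_0,h_S,h_M$ are exactly as in Lemma \ref{stability}, so that $\mathcal{E}_0$ is slope-stable with respect to $H:=\pi_S^*h_S+\pi_M^*h_M$ and the Azumaya algebra $\mathcal{A}_0=\mathcal{E}\mathit{nd}(\mathcal{E}_0)$ is $H$-hyperholomorphic.

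Next, given an arbitrary point $x=((X,\eta_X),(Y,\eta_Y))\in\mathcal{M}^+_\phi$, I would invoke Proposition \ref{prop-connected-components} to obtain a twistor path $\PP_1,\dots,\PP_k$ inside $\mathcal{M}^+_\phi$, consisting of hyperk\"ahler twistor lines of the form (\ref{Twistor-line}), joining $x_0$ to $x$, with chosen intersection points $p_i\in \PP_i\cap\PP_{i+1}$; by the genericity discussion after Definition \ref{Twistor-path-definition} and Lemma \ref{generifying} the $p_i$ can be taken to correspond to products of $K3$ surfaces with trivial Picard group. The heart of the argument is then an induction along this path: having produced a twisted locally free sheaf $\mathcal{E}_i$ over the product of $K3$ surfaces represented by a point on $\PP_i$, inducing the prescribed Hodge isometry via $-\kappa(\mathcal{E}_i^\vee)\sqrt{td}$, one extends it over the whole twistor line $\PP_i$. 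To do this one passes to the Azumaya algebra $\mathcal{A}_i=\mathcal{E}\mathit{nd}(\mathcal{E}_i)$, whose Chern classes $c_1(\mathcal{A}_i)=0$ and $c_2(\mathcal{A}_i)=-2r\kappa_2(\mathcal{E}_i)$ have Hodge type preserved along $\PP_i$ (this is the computation recorded just before Lemma \ref{stability}, using that the Hodge type of $\psi_{\mathcal{E}_i}$, equivalently of $\kappa_2(\mathcal{E}_i^\vee)$, is preserved by definition of $\mathcal{M}_\phi$ and that Todd classes are deformation invariant). One must also check $H$-slope-polystability of $\mathcal{A}_i$, which by \cite[Thm.~3.2.11]{Huybrechts-Lehn} follows from slope-stability of $\mathcal{E}_i$; at the very first step this is Lemma \ref{stability}, and at subsequent steps the polystability is furnished by the last clause of Theorem \ref{Hyper-theorem} (the extended bundle restricts to each twistor fibre as an $\omega_\lambda$-slope-polystable bundle). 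Then Theorem \ref{Hyper-theorem}, applied via the hyperk\"ahler structure on the product associated to $\PP_i$ by Lemma \ref{lemma-hyperkahler-twistor} and Remark \ref{remark-hyperkahler-twistor}, extends $\mathcal{A}_i$ as an untwisted sheaf of Azumaya algebras over the twistor family; by Remark \ref{preservation-of-algebra-structure} the algebra structure is preserved, so by the correspondence between Azumaya algebras and twisted sheaves (\cite{Cald}) this is the same as extending $\mathcal{E}_i$ as a twisted locally free sheaf over $\PP_i$. Restricting to the next intersection point $p_i$ (and using that at $p_i$ the Picard group is trivial, so the deformed sheaf there is again of the required stable type on the next line) gives $\mathcal{E}_{i+1}$, and one checks that the induced cohomological correspondence $-\kappa(\mathcal{E}_{i+1}^\vee)\sqrt{td}$ still equals the Hodge isometry coordinate $\psi_{p_i}$ prescribed by $\phi$, because this class is a flat section of $R^4\pi_*\QQ$ along the family (Remark \ref{deformation-invariance}) and agrees with $\phi$ at the starting point. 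After $k$ steps one arrives at a twisted locally free sheaf $\mathcal{E}_k$ on $X\times Y$ inducing $\psi_x$, i.e.\ a point of $\widetilde{\mathcal{M}}^+_\phi$ over $x$, proving surjectivity; the component $\mathcal{M}^-_\phi$ is handled identically.

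The main obstacle I anticipate is the bookkeeping needed to transport the sheaf correctly across an intersection point of two consecutive twistor lines: one must ensure that the sheaf produced by extending along $\PP_i$ and then restricting to $p_i\in\PP_i\cap\PP_{i+1}$ is slope-stable (not merely polystable) with respect to the polarization adapted to $\PP_{i+1}$, so that the inductive hypothesis can be re-applied; this is where genericity of the path (trivial Picard group at $p_i$, hence the positive cone equals the K\"ahler cone and any slope-polystable bundle of the relevant rank and Chern classes with no sub-bundles destabilizing it is in fact stable) is essential, and where one invokes the stability clause of Theorem \ref{Hyper-theorem} carefully. A secondary technical point is verifying that the characteristic class $-\kappa(\mathcal{E}_i^\vee)\sqrt{td}$ indeed continues to induce the isometry $\eta^{-1}_{Y}\phi\eta_{X}$ rather than merely some Hodge isometry; this is forced because $\kappa(\mathcal{E}_i^\vee)\sqrt{td}$ defines a flat section of the local system of degree-$4$ cohomology along the twistor family and agrees with the prescribed value over the base point of $\widetilde{\mathcal{M}}_\phi$ coming from Proposition \ref{not-empty}.
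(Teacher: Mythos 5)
Your plan is correct and follows essentially the same route as the paper's proof: start at the special point furnished by Proposition \ref{not-empty}, join it to an arbitrary point by a generic twistor path as in Proposition \ref{prop-connected-components} and Lemma \ref{generifying}, and extend the Azumaya algebra $\mathcal End(\mathcal E)$ line by line via Theorem \ref{Hyper-theorem}, using Lemma \ref{stability} on the first line and the triviality of the Picard group at the nodes to make polystability (hence the hypotheses of Theorem \ref{Hyper-theorem}) automatic afterwards, with Remark \ref{preservation-of-algebra-structure} and the Azumaya--twisted-sheaf correspondence converting the extension back into a twisted sheaf inducing the prescribed isometry. The two technical points you flag (polarization-independence of stability at the nodes and flatness of $\kappa$ forcing the correct isometry) are exactly how the paper handles the transitions.
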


\begin{proof}
The arguments for each of the two choices of the sign $\pm$ are absolutely similar, so
we will be proving surjectivity of $\widetilde{\Pe}  \colon  \widetilde{\mathcal M}^{+}_{\phi} \rightarrow {\mathcal M}^+_{\phi}$. 
Let us first outline the proof of surjectivity.
For an arbitrary point $y =((S_1,\eta_1),(S_2,\eta_2), \psi_y) \in \mathcal M^+_{\phi}$ we want to find a preimage of this point in $\widetilde{\mathcal M}^+_{\phi}$
under $\widetilde{\Pe}$.
First, we pick up a point $\widetilde{x} \in \widetilde{\mathcal M}^+_{\phi}$,
which exists by Proposition \ref{not-empty}. 
Let $x=\widetilde{\Pe}(\widetilde{x}) \in \mathcal M^+_{\phi}$. 
We choose an appropriate connected twistor path $l \subset \mathcal M^+_{\phi}$
consisting of twistor lines $\PP_j, j=1, \dots,k,$ 
joining $x$ and $y$, $x\in \PP_1$, $y \in \PP_k$.  
We then show that there exist locally free twisted sheaves $\mathcal F_j$ over complex-analytic families 
$\mathcal X_j = (i|_{\PP_j})^*\mathcal Y$ of
products of marked $K3$'s, for the inclusion $i \colon  l\rightarrow \mathfrak M \times \mathfrak M$ and the universal family 
$\mathcal Y \rightarrow \mathfrak M \times \mathfrak M$ so that ${\mathcal X_k}|_y =(S_1 \times S_2,(\eta_1,\eta_2)) $. 
Thus $$y=\widetilde{\Pe}(((S_1,\eta_1),(S_2,\eta_2), \mathcal F_k|_{{\mathcal X_k}|_y}))$$
and we are done.

Let us now get to the details of the sketched construction.
We want and can choose the point $\overline{x}$ to be of the form $((S,\eta_S), (M, \eta_M), \mathcal E)$ with $M=M_{h_S}(v)$
and $\mathcal E$ a universal sheaf on $S \times M$.
Let us specify the choice of $S$ and $\mathcal E$.
Choose $S$ as in the proof of Conclusion \ref{Existence-of-n-type-isometry-for-any-n} 
 to be a $K3$ surface with a cyclic Picard group generated by an ample divisor $h_S$, so that $S$ is general in the sense of Mukai,   \cite{Mukai2}. 
Then $M=M_{h_S}(v)$ has also a cyclic
Picard group, as  rationally
Hodge isometric $K3$ surfaces have the same Picard numbers. 
Choose $\mathcal E$
to be a normalized universal sheaf on $S\times M$ as in the proof of Conclusion \ref{Existence-of-n-type-isometry-for-any-n}.
We choose $l$ to be  a generic twistor path in $\mathcal M_\phi$ joining $\widetilde{\Pe}(\widetilde{x})$ and $y$
and consisting of lifts $\PP_j$ of lines $Q_j \subset \Omega_\phi$ associated to hyperk\"ahler structures.
Such a path was shown to exist in the course of the proof of Proposition \ref{prop-connected-components},
where $\mathcal M^+_\phi$ was shown to be connected. 
Let $p_j$ be the pont in the intersection of lines $\PP_j$ and $\PP_{j+1}$.

We are going to extend consequently $\mathcal A=\mathcal End(\mathcal E)$ over the families $\mathcal X_j$. 
This means that having extended $\mathcal A$ over $\mathcal X_j$ we get the extension over $\mathcal X_{j+1}$ provided that
the conditions of Theorem \ref{Hyper-theorem} are satisfied for the result of extension of $\mathcal A$ restricted to 
$\mathcal X_j|_{p_j}$ for $p_j \in \PP_j \cap \PP_{j+1}$ for all $j=1,\dots,k-1$, $p_0:=x$.

In order to extend $\mathcal A$ from $\mathcal X_1|_{p_0}=((S,\eta_S),(M,\eta_M))$ over the whole $\mathcal X_1$, 
we want to choose $\PP_1$ in the path $l$ to be the line
determined by $h \in Pic(S)$. 
Indeed, as $\psi_{\mathcal E}(h_S)=h_M$, 
Lemma   \ref{generifying} guarantees that this twistor line is generic.
By the construction of $\mathcal M_{\phi}$ the Hodge type of $c_2(\mathcal A)$ under the twistor deformations defined by lines
in $\mathcal M_{\phi}$ (see the discussion after Remark \ref{Marking-remark})
and we proved $H$-slope-stability of $\mathcal E$ 
for $H=\pi_S^*h_S+\pi_M^*h_M$ in Lemma \ref{stability}.
So, by Remark \ref{remark-hyperkahler-twistor} and Theorem \ref{Hyper-theorem} we can extend $\mathcal A$ over the first twistor family over the first line of the path.
Further extensions over families $\mathcal X_2, \dots, \mathcal X_k$
exist automatically because  the sheaf $\mathcal A_t$ over $S_t \times M_t$ with trivial Picard group, $Pic(S_t)=Pic(M_t)=\langle 0 \rangle$,
being slope-polystable with respect to some K\"ahler class (by Theorem \ref{Hyper-theorem}),
is 
slope-polystable for any choice of K\"ahler class on $S_t \times M_t$. 
Indeed, on a compact K\"ahler manifold with a trivial Picard group  a slope-stable locally free sheaf of rank $n>0$
does not admit any subsheaf of rank $<n$, so it is automatically slope-stable with respect to
any K\"ahler class on this manifold. The same holds if we replace slope-stability by slope-polystability.



 The proof of Proposition \ref{Surjectivity-prop}
is complete.
\end{proof}

\subsection {Reduction to the case of a cyclic isometry.}
\label {Reduction-to-the-case-of-a-cyclic-isometry}

Here we prove Theorem \ref{Main-Theorem-Algebraic} 
formulated in the introduction. 

\begin{prop} Every rational Hodge isometry of  
projective surfaces of cyclic type is algebraic. 
\label{Cyclic-characteristic}
\end{prop}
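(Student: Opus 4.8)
The plan is to deduce Proposition \ref{Cyclic-characteristic} from the surjectivity result Proposition \ref{Surjectivity-prop} together with the existence result Proposition \ref{not-empty}. Suppose we are given projective $K3$ surfaces $S_1,S_2$ and a rational Hodge isometry $\varphi\colon H^2(S_1,\QQ)\rightarrow H^2(S_2,\QQ)$ of cyclic type. First I would reduce to the \emph{signed} case: if $\varphi$ is not signed then $-\varphi$ is (as noted right after the definition of signed isometries), and $-\varphi$ is again a Hodge isometry of cyclic type (negation does not change $I_\varphi$); since $-\mathrm{id}$ is induced by the algebraic class $-[\Delta]$ (or by the obvious algebraic correspondence), algebraicity of $Z_{-\varphi}$ will give algebraicity of $Z_\varphi$. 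So assume $\varphi$ is signed. Choose markings $\eta_1,\eta_2$ of $S_1,S_2$ so that the point $x:=((S_1,\eta_1),(S_2,\eta_2))$ lies in the component $\mathfrak M^{++}$ (replace $\eta_2$ by its composition with an orientation-reversing isometry of $\Lambda$ if necessary), and set $\phi:=\eta_2\varphi\eta_1^{-1}\colon \Lambda_\QQ\rightarrow\Lambda_\QQ$; this $\phi$ is a signed rational isometry of cyclic type.

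Next I would check that $x$ actually lies in the cohomological moduli space $\mathcal M_\phi^+$. By construction $\psi_x=\varphi$ satisfies $\psi_x=\eta_2^{-1}\phi\eta_1$ and, since $\varphi$ is a signed Hodge isometry of \emph{projective} $K3$ surfaces, $\varphi(K_{S_1})$ and $K_{S_2}$ are both open in the same positive cone component; a signed Hodge isometry maps ample classes to classes of positive square in $C^+_{S_2}$, and after possibly adjusting $\varphi$ by an element of the Weyl group $W(S_2)$ (which is realized by an automorphism of $S_2$, hence by an algebraic class, via the strong Torelli theorem) one arranges $\varphi(K_{S_1})\cap K_{S_2}\neq\emptyset$. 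This puts $x\in\mathcal M_\phi^+$. Now $\widetilde{\mathcal M}_\phi$ is nonempty by Proposition \ref{not-empty}, and by Proposition \ref{Surjectivity-prop} the forgetful map $\widetilde{\Pe}\colon \widetilde{\mathcal M}_\phi^+\rightarrow\mathcal M_\phi^+$ is surjective, so there is a twisted locally free sheaf $\mathcal E$ on $S_1\times S_2$ with $-\kappa(\mathcal E^\vee)\sqrt{td_{S_1\times S_2}}$ inducing $\varphi$.

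It then remains to explain why the existence of such an $\mathcal E$ forces $Z_\varphi$ to be algebraic. The class $-\kappa(\mathcal E^\vee)\sqrt{td_{S_1\times S_2}}$ is a polynomial in Chern classes of $\mathcal E$ (more precisely, the $\kappa$-class of a twisted locally free sheaf is, by its very definition, a rational polynomial in the Chern character of the untwisted sheaf $\mathcal E^{\otimes n}\otimes\det(\mathcal E^\vee)$), and $\varphi$ differs from the restriction of this class to $H^2(S_1,\QQ)$ only by the Hodge classes $\pi_S^*\mathbf 1$, $\pi_M^*\mathbf 1$ and the correction term coming from $\sqrt{td}$, all of which are algebraic on the product of two projective surfaces. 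Hence the component of $-\kappa(\mathcal E^\vee)\sqrt{td_{S_1\times S_2}}$ in $H^{2,2}(S_1\times S_2,\CC)\cap H^4(S_1\times S_2,\QQ)$ that induces $\varphi$ is algebraic, i.e. $Z_\varphi$ is algebraic. I expect the main obstacle to be the careful bookkeeping in the second paragraph: verifying that after the reductions (signs, markings, Weyl-group adjustment) the given $\varphi$ genuinely lands in $\mathcal M_\phi^+$ rather than merely in $(\Pe,\Pe)^{-1}(\Omega_\phi)$, so that Proposition \ref{Surjectivity-prop} applies on the nose — the Kähler-cone condition is the delicate point, and it is exactly where the strong Torelli theorem and the $(-2)$-curve Weyl group enter.
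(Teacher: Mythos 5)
Your overall route is the paper's route: reduce to a signed isometry, fix the K\"ahler-cone condition by a Weyl-group adjustment, apply Proposition \ref{not-empty} and Proposition \ref{Surjectivity-prop} to realize the adjusted isometry by the $\kappa$-class of a twisted locally free sheaf, and conclude algebraicity because on a product of projective surfaces that class is a rational polynomial in Chern classes of coherent sheaves. However, there is one genuinely wrong step in your second paragraph: you justify the harmlessness of the Weyl-group adjustment by claiming that an element $w\in W(S_2)$ ``is realized by an automorphism of $S_2$ \ldots via the strong Torelli theorem.'' This is false. The strong Torelli theorem produces an automorphism only from an \emph{effective} Hodge isometry, i.e.\ one preserving the K\"ahler cone; a nontrivial reflection $r_C$ in a $(-2)$-curve sends the K\"ahler cone to a different chamber of the positive cone, so it is never induced by an automorphism (indeed the whole point of $W(S_2)$ in the Torelli package is to account for the discrepancy $g=w\circ f^*$). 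As written, your argument that $w$, hence $w^{-1}$, is algebraic has no support, and with it the passage back from $w\circ\varphi$ to $\varphi$ collapses.

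The repair is exactly what the paper uses: each reflection $r_C$, $\alpha\mapsto\alpha+(\alpha\cdot C)\,C$, is induced by the class $[\Delta]+[C\times C]\in H^4(S_2\times S_2,\ZZ)$, which is algebraic since $C$ is an algebraic curve; hence $w$ and $w^{-1}$ are algebraic correspondences, and Lemma \ref{lemma-composition} lets you write $\varphi=w^{-1}\circ(w\circ\varphi)$ as a composition of algebraic correspondences once $w\circ\varphi$ is known to be algebraic. You should also record (it is one line, but needed for Propositions \ref{not-empty} and \ref{Surjectivity-prop} to apply to the adjusted isometry) that composing with the integral isometry $w$ does not change the cyclic type: since $w(\Lambda)=\Lambda$ one has $I_{w\circ\varphi}=I_\varphi$, just as you noted for $-\varphi$. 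With these two corrections your proposal coincides with the paper's proof.
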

\begin{proof}
Let $X,Y$ be any projective $K3$ surfaces and $f \colon  H^2(X,\QQ) \rightarrow H^2(Y,\QQ)$ be a rational Hodge isometry.
Up to replacing $f$ with $-f$ we may regard $f$ as a signed isometry.
We want to prove that $f$ is algebraic 
by applying Proposition \ref{Surjectivity-prop}.
In order to apply this proposition we need $f$ to satisfy the K\"ahler cone condition  from Definition \ref{Definition-cohomological-moduli},
namely $f(K_X)\cap K_Y \neq \emptyset$. 
As $f$ is signed, it identifies the positive cones of $X$ and $Y$, and  after 
composing with a finite number of reflections $r_C$ for (-2)-curves $C$, which are all
algebraic, one can assume that $f$ satisfies the K\"ahler condition.
Choose any signed markings
$\eta_X \colon H^2(X,\ZZ) \rightarrow \Lambda$ and
$\eta_Y \colon H^2(Y,\ZZ) \rightarrow \Lambda$ and set $\phi:=\eta_Y  f  \eta^{-1}_X$.
Thus the moduli space $\mathcal M_\phi$ is defined and we may now apply  Proposition \ref{Surjectivity-prop},
according to which the quadruple $((X,\eta_X),(Y,\eta_Y))$ lies in the image of the forgetful map $\widetilde{\Pe_\phi} \colon \widetilde {\mathcal M}_\phi \rightarrow \mathcal M_\phi$.
This proves that $f$ is algebraic. 
\end{proof}
Further by an isometry we mean an isometry of the rationalized $K3$ lattice $\Lambda_\QQ$.
The cyclic type isometries are not only simple looking ones, they are actually building blocks for all
other rational isometries, 
that is, every rational isometry $\phi$ of $\Lambda_\QQ$ is a composition of
isometries of cyclic type. This follows from Cartan-Dieudonn\'e
theorem,  see \cite[Prop. 2.36]{Gerstein}.
%
%
For the proof of Theorem \ref{Main-Theorem-Algebraic} we will need the following lemma.

\begin{lem}
\label {lemma-composition}

Let $\alpha \in H^*(S_1\times S_2,\QQ)$ and $\beta \in H^*(S_2\times S_3,\QQ)$ be algebraic classes for 
smooth projective varieties
$S_1,S_2,S_3$. Then the class ${\pi_{13}}_* (\pi^*_{12}(\alpha) \cup \pi^*_{23}(\beta)) \in H^*(S_1\times S_3,\QQ)$ is also algebraic.
\end{lem}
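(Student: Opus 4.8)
The statement is the standard fact that the composition of algebraic correspondences is algebraic, so the plan is to reduce it to the closure of algebraic cycle classes under the basic operations: flat (or l.c.i.) pullback, cup product, and proper pushforward. First I would recall that for a smooth projective variety $W$ a cohomology class in $H^*(W,\QQ)$ is \emph{algebraic} precisely when it lies in the image of the cycle class map $cl\colon CH^*(W)_\QQ \to H^*(W,\QQ)$; equivalently, when it is a $\QQ$-linear combination of fundamental classes $[Z]$ of irreducible subvarieties $Z\subset W$. So it suffices to treat $\alpha=[A]$ and $\beta=[B]$ for irreducible subvarieties $A\subset S_1\times S_2$ and $B\subset S_2\times S_3$, and then extend by $\QQ$-bilinearity.

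The key step is the compatibility of the cycle class map with the three operations appearing in the formula ${\pi_{13}}_*(\pi_{12}^*\alpha\cup\pi_{23}^*\beta)$. Explicitly: (i) $\pi_{12}\colon S_1\times S_2\times S_3\to S_1\times S_2$ and $\pi_{23}\colon S_1\times S_2\times S_3\to S_2\times S_3$ are smooth projective morphisms, so flat pullback of cycles is defined and $cl$ commutes with it, i.e. $\pi_{12}^*[A]=cl(\pi_{12}^*A)$ and $\pi_{23}^*[B]=cl(\pi_{23}^*B)$, these being (rational combinations of) classes of the preimage subvarieties $A\times S_3$ and $S_1\times B$; (ii) cup product in cohomology corresponds to intersection product of cycles: $cl(Z)\cup cl(Z')=cl(Z\cdot Z')$ on a smooth variety, so $\pi_{12}^*\alpha\cup\pi_{23}^*\beta$ is the class of the algebraic cycle $(A\times S_3)\cdot(S_1\times B)$ in $CH^*(S_1\times S_2\times S_3)_\QQ$; (iii) $\pi_{13}\colon S_1\times S_2\times S_3\to S_1\times S_3$ is proper, so proper pushforward of cycles is defined and $cl$ commutes with it (with the appropriate degree shift), giving ${\pi_{13}}_*cl(\gamma)=cl({\pi_{13}}_*\gamma)$ for any cycle $\gamma$. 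Chaining these, ${\pi_{13}}_*(\pi_{12}^*\alpha\cup\pi_{23}^*\beta)=cl\!\left({\pi_{13}}_*\big((A\times S_3)\cdot(S_1\times B)\big)\right)$, which is by construction an algebraic class in $H^*(S_1\times S_3,\QQ)$.

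All three compatibilities are classical; a convenient single reference is Fulton's \emph{Intersection Theory}, Chapter 19 (for the definition and functoriality of the cycle class map $CH^*\to H^*$ for smooth varieties over $\CC$, including compatibility with flat pullback, intersection product, and proper pushforward). The one mild technical point to be careful about is that intersection product in the Chow ring of a smooth variety and the cup product in cohomology match under $cl$ even when the geometric intersection is not transverse — this is part of the statement that $cl$ is a ring homomorphism for smooth varieties, and no moving-lemma argument is needed at the level of Chow groups. I do not expect any genuine obstacle here: the lemma is purely formal once one invokes the functoriality of the cycle class map, and the proof is essentially the three-line computation displayed above together with a reduction to the case of classes of irreducible subvarieties by linearity.
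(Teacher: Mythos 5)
Your proposal is correct, but it follows a genuinely different route from the paper. You compose the correspondences at the level of algebraic cycles, reducing by linearity to classes of irreducible subvarieties and invoking the compatibility of the cycle class map $CH^*(-)_\QQ\to H^*(-,\QQ)$ with flat pullback, intersection product, and proper pushforward (Fulton). The paper instead notes that pullback and cup product obviously preserve algebraicity, so the only nontrivial operation is ${\pi_{13}}_*$, and it isolates this as Lemma \ref{lemma-pushforward}: for a smooth morphism $f\colon X\to Y$ of smooth projective varieties, $f_*Hdg(X)_{alg}\subset Hdg(Y)_{alg}$. That lemma is proved in the Appendix by sheaf-theoretic means: one first shows that the ring generated by algebraic classes is spanned over $\QQ$ by Chern characters $ch(\mathcal F)$ of coherent sheaves (working in $K_{alg}(X)$, using the identity for $ch$ of $\wedge^2$ and a Vandermonde argument to extract homogeneous components), hence by classes $ch(\mathcal F)\,td_f$, and then applies Grothendieck--Riemann--Roch, $f_*(ch(\mathcal F)td_f)=ch(\sum_i(-1)^iR^if_*\mathcal F)$, together with coherence of the higher direct images. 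Your argument is the shorter, classical one in the projective setting; the paper's argument is tailored to its working description of algebraic classes as polynomials in Chern classes of coherent sheaves (as in the abstract) and is of a kind that would also survive in an analytic/K\"ahler setting, where cycle-theoretic tools are unavailable. The only point to make explicit in your version is the identification of your notion of algebraic class (image of the cycle class map with $\QQ$-coefficients) with the paper's (the $\QQ$-span of polynomials in Chern classes of coherent sheaves); for smooth projective varieties this equivalence is standard, so there is no gap.
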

Note that the pull-back of an algebraic  class is an algebraic class and the product of two 
algebraic classes is also an algebraic class.
So the only operation in the lemma for which preservation of algebraicity is nontrivial
is the pushforward.
The preservation  of the algebraicity under a pushforward (and, hence, Lemma \ref{lemma-composition})
follows from the more general Lemma \ref{lemma-pushforward} formulated below.
In order to formulate this lemma we need to introduce a new notation.
Let $X$ be a smooth projective variety. Denote by $Hdg(X)_{alg}$
the subring of the cohomology ring $H^*(X,\QQ)$ generated by algebraic classes.

\begin{lem}
\label {lemma-pushforward}

Let $f \colon  X \rightarrow Y$ be a smooth morphism of smooth projective varieties $X$ and $Y$.
Then $f_*Hdg(X)_{alg} \subset Hdg(Y)_{alg}$.

\end{lem}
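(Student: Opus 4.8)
The plan is to reduce the statement to the Grothendieck--Riemann--Roch theorem, after first reinterpreting the ring $Hdg(X)_{alg}$ as the image of the rational Chern character.

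The first step is to record the identification
\[
Hdg(X)_{alg}\ =\ ch\bigl(K_0(X)_\QQ\bigr)\ \subset\ H^{*}(X,\QQ),
\]
valid on any smooth projective variety, and likewise for $Y$. Indeed $ch\colon K_0(X)_\QQ\to H^{*}(X,\QQ)$ is a ring homomorphism (the product on $K_0(X)_\QQ$ being induced by the derived tensor product, which is available because $X$ is smooth), so its image is a subring of $H^{*}(X,\QQ)$; since $K_0(X)$ is generated by classes of coherent sheaves and, by Newton's identities, the subring generated by the homogeneous components $ch_j(\mathcal F)$ of Chern characters of coherent sheaves coincides with the subring generated by their Chern classes $c_j(\mathcal F)$, that image is precisely $Hdg(X)_{alg}$. (If one prefers to define algebraic classes as classes of algebraic cycles, the same equality holds, using in addition that $ch\colon K_0(X)_\QQ\to\bigoplus_i CH^i(X)_\QQ$ is an isomorphism and that the cycle class map is onto the algebraic classes.)

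Next I would invoke Grothendieck--Riemann--Roch: since $X$, $Y$ are smooth and $f$ is proper, for $\xi\in K_0(X)_\QQ$ the class $f_{!}\xi\in K_0(Y)_\QQ$ is defined and
\[
ch(f_{!}\xi)\cdot td(T_Y)\ =\ f_{*}\bigl(ch(\xi)\cdot td(T_X)\bigr).
\]
Now take an arbitrary $\beta\in Hdg(X)_{alg}$. By the first step $\beta=ch(\xi)$ for some $\xi\in K_0(X)_\QQ$, and $td(T_X)^{-1}$, being a polynomial in the Chern classes of the vector bundle $T_X$, also lies in $Hdg(X)_{alg}=ch(K_0(X)_\QQ)$, say $td(T_X)^{-1}=ch(\theta)$. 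Then $ch(\xi\theta)\cdot td(T_X)=ch(\xi)\,ch(\theta)\,td(T_X)=ch(\xi)=\beta$, so applying GRR to $\xi\theta$ gives
\[
f_{*}\beta\ =\ f_{*}\bigl(ch(\xi\theta)\cdot td(T_X)\bigr)\ =\ ch\bigl(f_{!}(\xi\theta)\bigr)\cdot td(T_Y).
\]
The right-hand side lies in $Hdg(Y)_{alg}$, since $ch(f_{!}(\xi\theta))\in ch(K_0(Y)_\QQ)=Hdg(Y)_{alg}$, $td(T_Y)\in Hdg(Y)_{alg}$, and $Hdg(Y)_{alg}$ is a ring. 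This proves the lemma.

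A few remarks on where the difficulty lies: the computation above is essentially formal, so I expect no genuine obstacle. The one ingredient that is not a routine manipulation is the identification of $Hdg(X)_{alg}$ with $ch(K_0(X)_\QQ)$ in the first step, but this is a standard consequence of the basic properties of the rational Chern character on smooth projective varieties. I also note that smoothness of $f$ is not really used: properness suffices for GRR between smooth varieties, and smoothness serves only to make $T_{X/Y}$ an honest vector bundle, which would allow the alternative bookkeeping $td(T_X)=td(T_{X/Y})\cdot f^{*}td(T_Y)$ combined with the projection formula.
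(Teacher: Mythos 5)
Your overall route is the same as the paper's: everything is reduced to Grothendieck--Riemann--Roch. The GRR bookkeeping itself is correct and in fact slightly slicker than the paper's version (you absorb the Todd class by writing $td(T_X)^{-1}=ch(\theta)$ and pushing forward $\xi\theta$, whereas the paper multiplies the generators by the relative Todd class $td_f$ and applies relative GRR directly), and your observation that smoothness of $f$ is not really needed is correct.

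The gap is in your first step, the identification $Hdg(X)_{alg}=ch\bigl(K_0(X)_\QQ\bigr)$, and specifically in the inclusion $Hdg(X)_{alg}\subset ch\bigl(K_0(X)_\QQ\bigr)$ -- the direction you actually use when you write $\beta=ch(\xi)$ and $td(T_X)^{-1}=ch(\theta)$. The phrase ``by Newton's identities'' does not deliver this: Newton's identities only interchange the two generating sets $\{c_j(\mathcal F)\}$ and $\{ch_j(\mathcal F)\}$ of one and the same subring; they say nothing about whether an individual homogeneous component $ch_j(\mathcal F)$ (equivalently a Chern class $c_j(\mathcal F)$) is a $\QQ$-linear combination of full, inhomogeneous Chern characters, i.e.\ lies in the image of $ch$, which is the $\QQ$-span of the classes $ch(\mathcal F)$ (a ring thanks to the K-theoretic tensor product). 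That is precisely the non-formal point, and it is where the paper spends essentially all of its effort: Lemmas \ref{spanning-lemma} and \ref{V-is-Hdg} prove it via the exterior-square operation on $K_{alg}(X)$, the resulting identity relating $ch(\wedge^2\xi)$ to $ch(\xi)^2$ and $r_2\,ch(\xi)$, and a Vandermonde argument showing the span of Chern characters is stable under the degree-rescaling operator $r_2$ and hence contains all graded pieces. Your parenthetical appeal to Grothendieck's theorem that $ch\colon K_0(X)_\QQ\to CH^*(X)_\QQ$ is an isomorphism would indeed close the gap (the image of the cohomological $ch$ is then the image of the cycle class map, which is graded, so it contains each $ch_j(\mathcal F)$ and hence each $c_j(\mathcal F)$); but as written you invoke it only for an alternative definition of ``algebraic,'' and it is a substantive theorem rather than a routine property of $ch$, so either it, or an argument along the lines of the paper's $\lambda$-operation/Vandermonde computation, must be stated as the actual justification of that first step.
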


For the proof of  Lemma \ref{lemma-pushforward} we refer to Section \ref{Appendix}.

\begin{proof}[Proof of Theorem \ref{Main-Theorem-Algebraic}]

Consider a general Hodge isometry $\varphi  \colon  H^2(S, \QQ) \rightarrow H^2(S^{'}, \QQ)$ inducing via markings
$\eta, \eta^{'}$ an isometry $\phi  \colon  \Lambda_\QQ \rightarrow \Lambda_\QQ.$
The isometry $\phi$ decomposes as a composition of cyclic type isometries, $\phi =\phi_k \circ \dots \circ \phi_1$ by 
the Cartan-Dieudonn\'e theorem.
Set $\eta_0:=\eta$ and $\eta_k:=\eta^{\prime}$.
Starting with $S$ and using the surjectivity of the period map for marked $K3$ surfaces we obtain a sequence of marked $K3$ surfaces 
$$(S, \eta), (S_1,\eta_1) \dots, (S_{k-1}, \eta_{k-1}), (S^{'}, \eta^{'}),$$
with periods $[\eta(\sigma_S)], l_i=\widetilde{\phi}_i\circ \dots \circ \widetilde{\phi}_1([\eta(\sigma_S)]), i=1,\dots, k$,
so that $l_k=[\eta^{'}(\sigma_{S^{'}})]$. 

Set $\psi_i=\eta^{-1}_{i+1}\phi_{i+1}\eta_i$, $i=0,\dots, k-1$. 
Now, each $\psi_i$ is algebraic by Proposition \ref{Cyclic-characteristic} and is thus given by an algebraic class $$\alpha_i \in H^{2,2}(S_i \times S_{i+1}, \QQ).$$
Then taking the composition of the $\alpha_i$'s as correspondences by Lemma \ref{lemma-composition} we obtain an 
algebraic class  $$\alpha \in \oplus_p H^{p,p}(S_0 \times S_{k}, \QQ),$$
which induces the isometry $\varphi$. 
This completes the proof of Theorem \ref{Main-Theorem-Algebraic}.
\end{proof}

\section{Appendix}
\label{Appendix}

\begin{proof}[Proof of Lemma \ref{lemma-pushforward}]

  We will prove the lemma by proving that the vector space $Hdg(X)_{alg}$ is spanned over $\QQ$ by classes of the form $ch(\mathcal F)td_f$, for $\mathcal F$ a coherent sheaf on $X$ and $td_f$ the Todd class of the relative tangent bundle of the morphism $f \colon X \rightarrow Y$. Then the statement will follow by Grothendieck-Riemann-Roch formula 
$$f_*(ch(\mathcal F)\cdot td_f)=ch(\sum (-1)^iR^if_*(\mathcal F)),$$ 
and coherence of the sheaves $R^if_*\mathcal F$.

First of all note that the class $td_f$ belongs to $Hdg(X)_{alg}$ being an invertible element of this ring. Multiplication by 
$td_f$ induces a linear automorphism of the vector space  $Hdg(X)_{alg}$. So it is sufficient
to prove that  $Hdg(X)_{alg}$ is spanned  over $\QQ$ by classes of the form $ch(\mathcal F)$. 
In order to prove this we need two lemmas formulated below.
Let $V$ be the subring of $H^*(X,\QQ)$ generated by Chern characters $ch(\mathcal F)$
of coherent sheaves $\mathcal F$ on $X$.
\begin{lem}
\label {spanning-lemma}
The subring $V$ is 
spanned over $\QQ$ by classes of the form $ch(\mathcal F)$.
\end{lem}
\begin{lem}
\label{V-is-Hdg}
The subring $V$ is equal to $Hdg(X)_{alg}$.
\end{lem}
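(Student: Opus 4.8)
The plan is to identify both $V$ and $Hdg(X)_{alg}$ with the ring of algebraic cohomology classes on $X$. Write $H^{2p}_{alg}(X,\QQ)$ for the image of the cycle class map $CH^p(X)\otimes\QQ \to H^{2p}(X,\QQ)$, and $H^*_{alg}(X,\QQ)=\bigoplus_p H^{2p}_{alg}(X,\QQ)$. Since the cycle class map is a ring homomorphism for the intersection product, $H^*_{alg}(X,\QQ)$ is a graded subring of $H^*(X,\QQ)$; as it is spanned over $\QQ$ by fundamental classes of subvarieties, it is exactly the subring generated by algebraic classes, so $Hdg(X)_{alg}=H^*_{alg}(X,\QQ)$. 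Hence it suffices to prove $V=H^*_{alg}(X,\QQ)$.

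The inclusion $V\subseteq H^*_{alg}(X,\QQ)$ is immediate: for a coherent sheaf $\mathcal F$ on the smooth projective variety $X$, each graded piece $ch_p(\mathcal F)$ is a universal polynomial in the Chern classes $c_i(\mathcal F)$, and each $c_i(\mathcal F)$ is an algebraic class (for $\mathcal F$ locally free one uses a degeneracy locus, and in general one passes to a finite locally free resolution); hence $ch(\mathcal F)=\sum_p ch_p(\mathcal F)\in H^*_{alg}(X,\QQ)$, and since the target is a ring it contains $V$.

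For the reverse inclusion I would prove $H^{2p}_{alg}(X,\QQ)\subseteq V$ by descending induction on $p$, the range $p>\dim X$ being empty and the base case $p=\dim X$ being settled by $ch(\mathcal O_{x})=[x]$ for a point $x\in X$. A class in $H^{2p}_{alg}(X,\QQ)$ is a $\QQ$-combination of classes $[Z]$ of codimension-$p$ subvarieties, so fix such a $Z$ with inclusion $i\colon Z\hookrightarrow X$. Because $i_*\mathcal O_Z$ is supported in codimension $p$, its Chern character vanishes in degrees $<2p$; and Grothendieck-Riemann-Roch (applied after a resolution of $Z$, whose exceptional contributions have codimension $>p$) shows $ch_p(i_*\mathcal O_Z)=[Z]$ while $ch_q(i_*\mathcal O_Z)\in H^{2q}_{alg}(X,\QQ)$ for every $q$. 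Therefore
\[
ch(i_*\mathcal O_Z)=[Z]+\gamma',\qquad \gamma'=\sum_{q>p}ch_q(i_*\mathcal O_Z)\in\bigoplus_{q>p}H^{2q}_{alg}(X,\QQ).
\]
By Lemma \ref{spanning-lemma}, $ch(i_*\mathcal O_Z)$ lies in $V$; by the inductive hypothesis every homogeneous component of $\gamma'$ lies in $V$; and since $V$ is a $\QQ$-subspace we conclude $[Z]=ch(i_*\mathcal O_Z)-\gamma'\in V$. This completes the induction, gives $H^*_{alg}(X,\QQ)\subseteq V$, and hence $V=Hdg(X)_{alg}$.

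The only non-formal ingredient is the behaviour of the cohomological Chern character of $i_*\mathcal O_Z$ — that it has no components below degree $2p$ and that its leading component equals the cycle class $[Z]$ — which is precisely (the graded form of) the isomorphism $ch\colon K_0(X)_\QQ\xrightarrow{\ \sim\ }\bigoplus_p CH^p(X)_\QQ$ for smooth $X$; everything else reduces to Lemma \ref{spanning-lemma}, which ensures that the subring generated by the $ch(\mathcal F)$ is already their $\QQ$-linear span.
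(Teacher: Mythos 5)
Your proof is correct, but it follows a genuinely different route from the paper's. The paper never mentions cycle classes or structure sheaves of subvarieties: it reduces Lemma \ref{V-is-Hdg} to showing that $V$ contains every homogeneous component of $ch$ of any class in $K_{alg}(X)$, and proves this by a purely formal trick --- the splitting-principle identity $ch(\wedge^2\mathcal F)=\tfrac{1}{2}\bigl(ch(\mathcal F)^2-r_2\,ch(\mathcal F)\bigr)$ (together with its $Sym^2$ analogue for differences $[A]-[B]$), which shows that $V$ is stable under the operator $r_2$ acting by $2^i$ on $H^{2i}$, after which a Vandermonde-determinant argument extracts each graded piece; from this all Chern classes $c_i(\mathcal F)$ lie in $V$. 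You instead identify $Hdg(X)_{alg}$ with the image of the cycle class map and prove $[Z]\in V$ by descending induction on codimension, using that $ch(i_*\mathcal O_Z)$ has no components below degree $2p$, leading term $[Z]$, and algebraic higher terms. Your key input (compatibility of $ch$ with the codimension-of-support filtration, i.e.\ Borel--Serre/GRR-without-denominators via a resolution of $Z$) is a standard but non-formal external result, and, as you note, the exceptional-locus step tacitly uses the same vanishing for sheaves supported in higher codimension, so it is really an appeal to that filtration statement rather than a self-contained argument. What your route buys is that it makes the inclusion $Hdg(X)_{alg}\subseteq V$ completely explicit at the level of cycle classes --- a point the paper glosses over when it declares it "sufficient" to put the $c_i(\mathcal F)$ into $V$; what the paper's route buys is elementarity (only the splitting principle and linear algebra) and the slightly stronger conclusion that $V$ contains every graded component of $ch$ of an arbitrary $K$-theory class, hence all Chern classes of coherent sheaves. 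Either argument proves the lemma; yours is sound provided the cited filtration/GRR facts are accepted, which is consistent with the level of background the paper itself assumes (it freely uses GRR in the proof of Lemma \ref{lemma-pushforward}).
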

The idea of the proof of the first statement is based on multiplicativity
of Chern characters of locally free sheaves $\mathcal F, \mathcal G$ with respect to the tensor product, 
$$ch(\mathcal F)\cdot ch(\mathcal G) =  ch(\mathcal F \otimes \mathcal G).$$

The notion of the Chern character extends from locally free sheaves to coherent sheaves
via using locally free resolutions and the Whitney formula.
Let $K_{alg}(X)$ denote the K-group generated by coherent
sheaves on $X$. Every class in $K_{alg}(X)$ can be represented in the form $[A]-[B]$ for locally free sheaves $A$, $B$. 
This allows to extend the tensor product operation from classes of locally free sheaves
to classes of arbitrary coherent sheaves so that $ch \colon K_{alg}(X) \rightarrow H^*(X,\ZZ)$
is a ring homomorphism.

Note, that in the definitions of $Hdg(X)_{alg}$ and $V$ the Chern characters of coherent sheaves may be replaced by Chern characters of classes of $K_{alg}(X)$, which will not change our $V$ and $Hdg(X)_{alg}$
as subsets of $H^*(X)$. So this proves that the ring $V$ is spanned over $\QQ$ by Chern characters of coherent sheaves.
This completes the proof of Lemma \ref{spanning-lemma}.

Now let us prove Lemma \ref{V-is-Hdg}.
For this it is sufficient to show that any Chern class $c_i(\mathcal F)$ for $\mathcal F$ a coherent sheaf on $X$
belongs to $V$.  Actually, from the formula expressing the Chern character
of $\mathcal F$ as a polynomial of Chern classes of $\mathcal F$ it follows that it is sufficient to prove that $V$
contains together with every class $ch(\mathcal F)$ all its homogeneous components. Let us prove this statement first
for $\mathcal F$ being a vector bundle and then extend the idea of the proof to the general case.

Let  $\mathcal F$ be a locally free sheaf on $X$.
By definition $V$ contains Chern characters $ch(\mathcal F_k)$ of locally free sheaves
$\mathcal F_k$
defined by induction,  $$\mathcal F_0 =\mathcal F,$$ $$\mathcal F_k =\wedge^2\mathcal F_{k-1}, k \geqslant 1.$$
 The Chern characters of $\mathcal F_{k}$ and $\mathcal F_{k-1}$ are related in the following way. 

\begin{equation}
\label{Chern-character-of-wedge-product}
ch(\mathcal F_{k}) = \frac{(ch(\mathcal F_{k-1}))^2}{2}-r_2\,ch(\mathcal F_{k-1}),
\end{equation}
where $r_2$ is the automorphism of the cohomology ring $H^*(X,\QQ)$ acting by multiplication by $2^i$ on $H^{2i}(X,\QQ)$.
This equality follows from the splitting principle.

Equality (\ref{Chern-character-of-wedge-product}) tells us, in particular, that the subring of the ring $V$, generated by the Chern characters of 
locally free sheaves $\mathcal F$, is invariant under the automorphism $r_2$. 
So, together with every $ch(\mathcal F)$, the ring  $V$ contains all the elements 
of the form $$r^k_2 \, ch(\mathcal F), 0 \leqslant k \leqslant n={\rm dim}_\CC \,X.$$ 
Now a simple argument involving 
 nonvanishing of the corresponding Vandermonde determinant tells us that $V$ contains all the homogeneous components of $ch(\mathcal F)$.

As it was mentioned earlier, an arbitrary class in $K_{alg}(X)$ can be represented in the form $[A]-[B]$ for locally free sheaves $A$, $B$. 
Then we have
the wedge self-product defined as in \cite[Chapter III, \S 1]{AtiyahK}, that is
$$\wedge^2 ([A]-[B]) =[\wedge^2 A] - [A\otimes B] + [Sym^2 \, B].$$
Taking the Chern character of both sides of this equality we get
$$ch(\wedge^2 ([A]-[B])) = ch(\wedge^2 A)-ch(A\otimes B)+ch(Sym^2 \, B)=$$ $$=\frac{(ch(A))^2}{2}-\frac{r_2ch(A)}{2}-ch(A)ch(B)+ch(Sym^2 \, B).$$
Now the proof of (\ref{Chern-character-of-wedge-product}) in the general case reduces to the following lemma,
whose proof follows from the splitting principle as above.
\begin{lem} 
\label{ch-sym}
For a locally free sheaf $B$ on a manifold $X$ one has $$ch(Sym^2 B) = \frac{(ch(B))^2}{2}+r_2 \, \frac{ch(B)}{2}.$$
\end{lem}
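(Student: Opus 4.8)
The plan is to prove Lemma \ref{ch-sym} by the splitting principle, in exactly the same way that Equality (\ref{Chern-character-of-wedge-product}) was obtained. First I would reduce to the split case: both sides of the asserted identity are universal polynomial expressions in the Chern classes of $B$, so by the splitting principle (there is a morphism $p\colon Y\rightarrow X$ with $p^*$ injective on cohomology and $p^*B$ a direct sum of line bundles) it suffices to check the identity when $B=L_1\oplus\cdots\oplus L_r$ with Chern roots $x_i=c_1(L_i)$, so that $ch(B)=\sum_{i=1}^r e^{x_i}$.

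In that case $Sym^2 B=\bigoplus_{i\le j}L_i\otimes L_j$, hence
$$ch(Sym^2 B)=\sum_{i\le j}e^{x_i+x_j}=\sum_{i<j}e^{x_i+x_j}+\sum_{i=1}^r e^{2x_i}.$$
Next I would record two elementary identities: $ch(B)^2=\bigl(\sum_i e^{x_i}\bigr)^2=2\sum_{i<j}e^{x_i+x_j}+\sum_i e^{2x_i}$, and $r_2\,ch(B)=\sum_i e^{2x_i}$, the latter because $r_2$ multiplies a class in $H^{2k}$ by $2^k$ and therefore sends $e^{x_i}=\sum_k x_i^k/k!$ to $\sum_k (2x_i)^k/k!=e^{2x_i}$. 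Substituting $\sum_{i<j}e^{x_i+x_j}=\tfrac12 ch(B)^2-\tfrac12\sum_i e^{2x_i}$ into the displayed formula gives $ch(Sym^2 B)=\tfrac12 ch(B)^2+\tfrac12\sum_i e^{2x_i}=\tfrac12 ch(B)^2+\tfrac12 r_2\,ch(B)$, which is the claim.

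I do not expect any genuine obstacle: the argument is a formal manipulation with Chern roots, and the only point deserving a word of care is the identification $r_2\,ch(B)=\sum_i e^{2x_i}$, i.e. the compatibility of the degree-doubling operator $r_2$ with the exponential, which is immediate from the definition of $r_2$. As a consistency check one may note that the same computation with $\sum_{i<j}$ in place of $\sum_{i\le j}$ yields $ch(\wedge^2 B)=\tfrac12 ch(B)^2-\tfrac12 r_2\,ch(B)$ (the formula used just above in the main text), and adding the two recovers $ch(\wedge^2 B)+ch(Sym^2 B)=ch(B)^2=ch(B\otimes B)$, as it must.
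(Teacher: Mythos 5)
Your proof is correct and follows essentially the same route as the paper, which simply asserts that the identity ``follows from the splitting principle as above,'' i.e.\ the same Chern-root computation you carry out in full (and your closing consistency check via $B\otimes B\cong \wedge^2 B\oplus Sym^2 B$ matches the alternative reduction the author sketches). Nothing to add.
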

Indeed, then $$ch(\wedge^2 ([A]-[B]))=\frac{(ch([A]-[B]))^2}{2}-\frac{r_2ch([A]-[B])}{2},$$
which would prove (\ref{Chern-character-of-wedge-product}) in the general case.
Finally, applying again the Vandermonde determinant argument we obtain the proof of 
Lemma \ref{V-is-Hdg} and, hence,  Lemma \ref{lemma-pushforward} in the general case.
\end{proof}

\begin{section}{Acknowledgements}
I want to sincerely thank my advisor Eyal Markman for introducing me
to the Shafarevich conjecture and to methods of approaching it, and for overall permanent support.
This text is a part of my PhD thesis defended at the University of Massachusetts, Amherst.
I want to sincerely thank the participants of the  Intercity seminar at the University of Amsterdam
in Spring 2016 who  carefully read this text and made many useful suggestions on imroving the exposition.
In particular, I thank Lenny Taelman, Emre Sert\"{o}z and Wessel Bindt.
\end{section}

\vspace*{0.5cm}
Department of Mathematics, University of California San Diego,
nvbuskin@gmail.com

\end{document}